\newtheorem{theorem}{Theorem}
\newtheorem{lemma}[theorem]{Lemma}
\newtheorem{proposition}[theorem]{Proposition}
\newtheorem{corollary}{Corollary}
\newtheorem{remark}{Remark}
\renewcommand{\Re}{\mathrm{Re}}
\renewcommand{\Im}{\mathrm{Im}}
\theoremstyle{definition}
\newcommand{\ov}{\overline}
\newcommand{\mc}{\mathcal}
\newcommand{\md}{\,\mathrm{d}}
\newcommand{\wt}{\widetilde}
\newcommand{\eps}{\varepsilon}
\newcommand{\lcm}{\mathrm{lcm}}
\newcommand{\mf}{\mathfrak}
\newcommand{\bb}{\mathbb}
\declaretheoremstyle
    [headformat={\NOTE}, 
    notebraces={}{}, 
    notefont=\bfseries, 
    preheadhook=\def\thmt@space{}, 
    numbered=no
    ]{namedtheorem}
\begin{document}

\author{Ghaith Hiary, Tianyu Zhao}

\title[Unconditional estimates on the argument of Dirichlet $L$-functions]{Unconditional estimates on the argument of Dirichlet $L$-functions with applications to low-lying zeros}

\address{
    GH \& TZ: Department of Mathematics, The Ohio State University, 231 West 18th
    Ave, Columbus, OH 43210, USA.
}
\email{hiary.1@osu.edu}
\email{zhao.3709@buckeyemail.osu.edu}

\subjclass[2020]{11M06, 11M26}
\keywords{Dirichlet $L$-functions, low-lying zeros, explicit zero-density estimates}

\begin{abstract}
    We make explicit a result of Selberg on the argument of Dirichlet $L$-functions averaged over non-principal characters modulo a prime $q$. As a corollary, we show for all sufficiently large prime $q$ that the height of the lowest non-trivial zero of the corresponding family of $L$-functions is less than $1075\cdot \frac{2\pi}{\log q}$. Here the scaling factor $\frac{2\pi}{\log q}$ is the average spacing between consecutive low-lying zeros with height at most 1, say. We also obtain a lower bound on the proportion of $L$-functions whose first zero lies within a given multiple of the average spacing. These appear to be the first explicit unconditional results of their kinds. 
\end{abstract}

\maketitle

\section{introduction}

An important statistic for low-lying zeros of a given family of $L$-functions is the height of the first (i.e., lowest) non-trivial zero. In this paper, we are concerned with estimates related to this statistic for the family of Dirichlet $L$-functions to a prime modulus. In general, low-lying zeros of Dirichlet $L$-functions have been extensively studied in the literature; a non-exhaustive list of works in recent decades includes \cite{Mur90,BalMur92,OS99,IwaSar99,Sound00,HuRu03,Bui12,KhaNgo16,Pratt19,CCM22,KMN22,DPR23,QinWu25}. Some of these works deal specifically with the proportion of non-vanishing at the central point, which is another interesting problem but will not be the addressed in the current paper. 

Throughout let $q$ denote a large prime and $\chi$ a non-principal Dirichlet character modulo $q$. Recall that the $L$-function associated to $\chi$ is defined by
\[
L(s,\chi)=\sum_{n=1}^\infty \frac{\chi(n)}{n^s}, \quad \Re(s)>1,
\]
which admits an analytic continuation to $\bb{C}$. To study the distribution of non-trivial zeros $\rho=\beta+i\gamma$ of $L(s,\chi)$, we work with the argument
\[
S(t,\chi)=-\frac{1}{\pi}\int_{1/2}^\infty \Im \frac{L'}{L}(\sigma+it,\chi)\md \sigma.
\]
Here it is assumed that $t\neq \gamma$ for any $\gamma$, otherwise we set 
\[
S(t,\chi)=\lim_{\eps\to 0}\frac{1}{2}(S(t+\eps,\chi)+S(t-\eps,\chi)).
\]
If we write
\[
\wt{S}(t,\chi)=S(t,\chi)+S(t,\ov{\chi}), \quad t>0,
\]
then according to the Riemann\textendash von Mangoldt formula the number of non-trivial zeros with $-t\leq \gamma\leq t$ can be expressed as
\begin{equation}\label{Riemann-von mangoldt}
    N(t,\chi)=\frac{t}{\pi}\log\frac{qt}{2\pi e}+\wt{S}(t,\chi)+O(1)
\end{equation}
where the error term is fully understood. The irregularities in the counting function $N(t,\chi)$ therefore completely come from the term $\wt{S}(t,\chi)$. By analogy with the classical results for the Riemann zeta-function, we have 
\begin{equation}\label{S(t,chi) individual bound}
    S(t,\chi)=O(\log q(t+3))
\end{equation}
where the implied constant is uniform in $q$ and $t>0$, and this can be improved to
\[
S(t,\chi)=O\left(\frac{\log q(t+1)}{\log\log q(t+3)}\right)
\]
conditionally on the generalized Riemann hypothesis (GRH), which asserts that each non-trivial zero has $\beta=1/2$. For fixed $t$, while the previous two upper bounds do not prevent individual $S(t,\chi)$ to get arbitrarily large as $q\to \infty$, the average over $\chi\bmod q$ in fact remains bounded. This was first shown by Titchmarsh \cite{Titch31} under GRH, and quite remarkably Selberg \cite[Theorem 8]{Sel46} managed to establish this unconditionally. Writing
\[
\bb{E}[S(t,\chi)]=\frac{1}{q-2}\sum_{\chi\neq \chi_0}S(t,\chi)
\]
where $\chi$ ranges over all $q-2$ non-principal characters modulo $q$, Selberg proved for any $0< \eps\leq 1/4$ and $|t|\leq q^{1/4-\eps}$ that
\begin{align}\label{Selberg bound}
    \bb{E}[S(t,\chi)]=O(1)
\end{align}
where the implied constant depends only on $\eps$, not $q$. 

We now return to the discussion on low-lying zeros. In view of the zero-counting formula \eqref{Riemann-von mangoldt}, Selberg's bound \eqref{Selberg bound} and the observation $\bb{E}[\wt{S}(t,\chi)]=2\bb{E}[S(t,\chi)]$, we have for any fixed $t>0$
\begin{equation}\label{average spacing}
    \frac{1}{q-2}\sum_{\chi\neq \chi_0} N(t,\chi)\sim \frac{t}{\pi}\log q
\end{equation}
as $q\to \infty$. \footnote{Note that without averaging over $\chi$, \eqref{average spacing} is not necessarily true unconditionally as the error term $\wt{S}(t,\chi)$ could be of the same order of magnitude as the main term, in view of \eqref{S(t,chi) individual bound}.} In this sense,
the average spacing between the ordinates of consecutive low-lying zeros of $L(s,\chi)$ is $\frac{2\pi}{\log q}$, and consequently it is natural to normalize by this factor when measuring the height of the first zero. By studying the one-level density of the zeros of the family $L(s,\chi)$ modulo $q$, Hughes and Rudnick \cite[Corollary 8.2]{HuRu03} showed under GRH that
\begin{equation}\label{Hughes Rudnick first zero}
    \min_{\chi\neq \chi_0}\frac{|\gamma_{\chi,0}|\log q}{2\pi} \leq \frac{1}{4}+o(1), \quad q\to \infty
\end{equation}
where for each $\chi$
\[
|\gamma_{\chi,0}|:=\min_{\gamma_\chi}|\gamma_\chi|.
\]
\footnote{With additional averaging over moduli, Drappeau, Pratt and Radziwiłł \cite{DPR23} extended the support in Hughes and Rudnick's one-level density result, and as a consequence, after combining \cite[Theorem 1]{DPR23} and \cite[Theorem 8.1]{HuRu03} one obtains
\[
\min_{Q\leq q\leq 2Q}\min_{\chi\neq \chi_0}\frac{|\gamma_{\chi,0}|\log Q}{2\pi} \leq \frac{1}{2(2+\frac{50}{1093})}+o(1), \quad Q\to \infty.
\]} 
In other words, the lowest zero of this family has at most 1/4 times the expected height. Recently, motivated by the connection between \eqref{Riemann-von mangoldt} and \eqref{Selberg bound}, the second author
\cite{Zhao2} gave another proof of \eqref{Hughes Rudnick first zero} by showing that 
\[
\bb{E}[S(t,\chi)]\leq \frac{1}{4}+o(1)
\]
for $|t|\leq 1$, say. However, no explicit unconditional analogue of \eqref{Hughes Rudnick first zero} exists in the literature. Hence, our main goal in this paper is to make explicit Selberg's estimate \eqref{Selberg bound} in order to furnish an unconditional upper bound on the first zero. It is worth remarking that our results are not completely explicit, though, namely, when we state ``for $q\geq q_0$" we do not compute a specific value of $q_0$ beyond which the estimate holds true.

\begin{theorem}\label{main theorem}
    For $|t|\le 1$ and $q\geq q_0$,
    \begin{equation*}
        |\bb{E}[S(t,\chi)]|< 
        C_0:=1075.
    \end{equation*}
\end{theorem}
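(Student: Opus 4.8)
The plan is to start from the integral defining $S(t,\chi)$ and split it at some $\sigma_1>1$:
\[
\pi S(t,\chi)=-\int_{1/2}^{\sigma_1}\Im\frac{L'}{L}(\sigma+it,\chi)\md\sigma-\int_{\sigma_1}^{\infty}\Im\frac{L'}{L}(\sigma+it,\chi)\md\sigma .
\]
The second integral is the easy half. Since $\sigma_1>1$ we may expand $-\frac{L'}{L}(s,\chi)=\sum_n\Lambda(n)\chi(n)n^{-s}$, integrate termwise, and average over $\chi\neq\chi_0$: by orthogonality $\frac1{q-2}\sum_{\chi\neq\chi_0}\chi(n)$ equals $\frac{q-1}{q-2}$ on $n\equiv 1\ (q)$ and $-\frac1{q-2}$ on $(n,q)=1$, and the first alternative forces the prime power $n$ to exceed $q$. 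Hence this term is $O(q^{-1}\log\zeta(\sigma_1))=o(1)$, uniformly for $|t|\le1$; this is exactly the mechanism behind the passage from \eqref{Selberg bound} to \eqref{average spacing}, and it contributes essentially nothing to $C_0$.

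The first integral equals $\arg L(1/2+it,\chi)-\arg L(\sigma_1+it,\chi)$, and since $|\arg L(\sigma_1+it,\chi)|\le\log\zeta(\sigma_1)$ is harmless, everything reduces to bounding $\bb{E}[\arg L(1/2+it,\chi)]$, i.e.\ $\bb{E}[S(t,\chi)]$ itself. The naive route---dominating $\arg L(1/2+it,\chi)$ by the Backlund/Jensen count of sign changes of $\Re L(\sigma+it,\chi)$ on $[1/2,\sigma_1]$---only recovers the individual bound \eqref{S(t,chi) individual bound}, because the convexity estimate for $L(s,\chi)$ injects a factor $\log q$ per character into Jensen's inequality, so that a trivial sum gives $\bb{E}[S(t,\chi)]=O(\log q)$. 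Saving that $\log q$ forces one to exploit the averaging, and I see two essentially equivalent routes. Following Selberg, introduce a mollifier $M(s,\chi)=\sum_{n\le q^{\theta}}\mu(n)\chi(n)n^{-s}$ and establish an \emph{explicit} mean-value bound for $\frac1{q-2}\sum_{\chi\neq\chi_0}|LM-1|^2$ on a vertical line just to the right of $1/2$; this controls $\bb{E}[\log|LM|]$, hence (via Cauchy--Schwarz) $\bb{E}[\arg L]$, once the argument of the short Dirichlet polynomial $M$ and the final shift of the line of integration down to $\Re s=1/2$ are accounted for. Alternatively, following \cite{Zhao2}, apply the explicit formula to a test function $h$ with $\widehat h$ supported in $(-3/2,3/2)$: the prime side is $o(1)$ for exactly the orthogonality reason above, the main term is $\widehat h(0)$, the archimedean side is explicit, and Beurling--Selberg majorants and minorants then pin down $\bb{E}[N(t,\chi)]$, hence $\bb{E}[S(t,\chi)]$, to within the (explicit) extremal error.

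Either way the real difficulty---and the reason the constant is $982$ rather than anything near the truth $1/4$---is making the argument \emph{unconditional} and \emph{explicit} at once. In the explicit-formula route one may not assume $\beta=1/2$, so the zeros off the critical line, for which the test function extended off $\bb{R}$ grows, must be shown to be rare enough to preserve the majorant/minorant inequalities: this is precisely where an explicit zero-density estimate for the family $\{L(s,\chi)\}_{\chi\bmod q}$ enters, and its comparatively weak unconditional quality forces a small bandwidth, hence a large gap parameter, hence a large $C_0$. In the mollifier route the analogous loss sits in the explicit off-diagonal treatment of the mollified second moment and in the explicit estimates used to pass from $\Re s=1/2+c/\log q$ to $\Re s=1/2$, which is where the zeros close to the critical line are paid for. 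I expect the explicit zero-density estimate, and making it interface uniformly (in $q\ge q_0$ and $|t|\le1$) with the main term, to be the crux; the orthogonality step for the prime sum and the algebraic parts of the explicit formula are routine by comparison.
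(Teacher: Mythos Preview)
You correctly flag the two essential inputs --- orthogonality kills the averaged prime sum, and an explicit zero-density estimate handles zeros off the half-line --- and your instinct that the size of $C_0$ is governed by the quality of the unconditional zero-density input is exactly right. But your ``route 1'' has a gap at the step ``$\bb{E}[|LM-1|^2]$ small on $\Re s=\tfrac12+c/\log q$ $\Rightarrow$ $\bb{E}[\log|LM|]$ small $\Rightarrow$ (via Cauchy--Schwarz) $\bb{E}[\arg L(\tfrac12+it,\chi)]$ small''. The first implication is fine via $\log|x|\le\tfrac12(|x|^2-1)$, but the second is not a Cauchy--Schwarz step: $\log|LM|$ and $\arg(LM)$ are harmonic conjugates, and a small mean for one does not by itself bound the mean of the other, nor does it transport from $\Re s=\tfrac12+c/\log q$ down to $\Re s=\tfrac12$ without precisely the control on nearby zeros you are trying to produce. (Your preliminary split at $\sigma_1>1$ likewise reduces the problem to $\bb{E}[\arg L(\tfrac12+it,\chi)]$, which as you already note is circular.)

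The paper's bridge is different and more indirect. The mollified second moment (Lemma~\ref{lemma L*psi}) is used \emph{only} inside a Littlewood-type lemma (Lemma~\ref{lemma littlewood}) to prove the zero-density Theorem~\ref{theorem zero density}; it is never used to bound $\arg L$ directly. Separately, Selberg's explicit formula \eqref{L'/L} for $L'/L$ gives a \emph{pointwise} approximation (Proposition~\ref{prop: approx S(t,chi)})
\[
S(t,\chi)=\frac{1}{\pi}\Im\sum_{n<x^3}\frac{\Lambda_x(n)\chi(n)}{n^{\tilde s_{t,\chi}}\log n}+O\!\left(\Big(\sigma_{t,\chi}-\tfrac12\Big)\cdot(\text{explicit})\right),
\]
where $\sigma_{t,\chi}$ (see \eqref{def sigma_chi}) encodes the distance from $\tfrac12+it$ to nearby off-line zeros. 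Theorem~\ref{theorem zero density} then bounds moments of $\sigma_{t,\chi}-\tfrac12$ over $\chi$ (Lemma~\ref{lemma 17 v2}), and combining this with orthogonality for prime power sums (Lemma~\ref{lemma square prime sum}) yields an explicit second moment for $S(t,\chi)-\frac{1}{\pi}\Im\sum_{p<x^3}\chi(p)p^{-1/2-it}$ (Proposition~\ref{prop: selberg theorem 7}). The proof of Theorem~\ref{main theorem} is then a single Cauchy--Schwarz on that error together with $\bigl|\sum_{\chi\neq\chi_0}\chi(p)\bigr|=1$ on the prime sum; numerical optimization over the free parameters $(\eta,\delta,\kappa)$ produces $982$. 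Your route 2 via Beurling--Selberg extremal functions is plausible but is not what is done here; it was used in \cite{Zhao2} under GRH, and an explicit unconditional version would again have to route through the same $\sigma_{t,\chi}$/zero-density machinery to control the extension of the test function off the real axis.
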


\begin{corollary}\label{corollary 1}
    For $q\geq q_0$,
    \[
    \min_{\chi\neq \chi_0}\frac{|\gamma_{\chi,0}|\log q}{2\pi} < C_0.
    \]
\end{corollary}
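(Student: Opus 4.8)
The plan is to deduce Corollary \ref{corollary 1} from Theorem \ref{main theorem} by contradiction, using the Riemann--von Mangoldt formula \eqref{Riemann-von mangoldt} averaged over the non-principal characters $\chi\bmod q$.

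Suppose the conclusion fails for some prime $q\ge q_0$, i.e.\ every non-principal $\chi\bmod q$ satisfies $|\gamma_{\chi,0}|\ge T$, where $T:=C_0\cdot\frac{2\pi}{\log q}$. (If $|\gamma_{\chi,0}|=0$ for some $\chi$ the conclusion is trivial, so assume otherwise; and enlarging $q_0$ we may assume $T<1$.) Then for each $0<t<T$ no $L(s,\chi)$ has a non-trivial zero with $|\gamma|\le t$, so $N(t,\chi)=0$ for all $\chi\ne\chi_0$. Summing \eqref{Riemann-von mangoldt} over the $q-2$ non-principal characters, dividing by $q-2$, and using $\bb{E}[\wt{S}(t,\chi)]=2\,\bb{E}[S(t,\chi)]$ — valid because $\wt{S}(t,\chi)=S(t,\chi)+S(t,\ov{\chi})$ and $\chi\mapsto\ov{\chi}$ permutes the non-principal characters — we obtain, with $R(t,\chi)$ the explicit bounded error term of \eqref{Riemann-von mangoldt},
\[
0=\frac1{q-2}\sum_{\chi\ne\chi_0}N(t,\chi)=\frac{t}{\pi}\log\frac{qt}{2\pi e}+2\,\bb{E}[S(t,\chi)]+\frac1{q-2}\sum_{\chi\ne\chi_0}R(t,\chi)\qquad(0<t<T),
\]
hence $\bb{E}[S(t,\chi)]=-\tfrac12\bigl(\tfrac{t}{\pi}\log\tfrac{qt}{2\pi e}+\tfrac1{q-2}\sum_{\chi\ne\chi_0}R(t,\chi)\bigr)$ for $0<t<T$. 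By Stirling's formula,
\[
\frac{T}{\pi}\log\frac{qT}{2\pi e}=2C_0+\frac{2C_0}{\log q}\bigl(\log C_0-1-\log\log q\bigr)=2C_0-\frac{2C_0\log\log q}{\log q}\bigl(1+o(1)\bigr),
\]
which tends to $2C_0$ from below as $q\to\infty$; and $\bigl|\tfrac1{q-2}\sum_{\chi\ne\chi_0}R(t,\chi)\bigr|\le M$ for an absolute constant $M$ (indeed $R(t,\chi)\to0$ as $t\to0$, since the completed $L$-functions are entire and, under our hypothesis, zero-free on the real axis). Letting $t\to T^-$ and then $q\to\infty$ thus yields $\bb{E}[S(t,\chi)]\le-C_0+\tfrac12 M+o(1)$. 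This contradicts Theorem \ref{main theorem} provided the bound its proof actually establishes is $|\bb{E}[S(t,\chi)]|<982-\kappa$ for a fixed $\kappa>\tfrac12 M$ — which is why $982$, obtained by rounding up, is the common constant in the theorem and the corollary. The contradiction completes the proof.

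The genuinely hard part is not this deduction — bookkeeping once Theorem \ref{main theorem} is available — but the theorem itself, i.e.\ the explicit form of Selberg's estimate \eqref{Selberg bound}, which we take as given. Within the corollary's proof the one delicate point is exactly the constant chase above: one must check that the slack between the bound truly established for $|\bb{E}[S(t,\chi)]|$ and the stated $982$, together with the size of the explicit error in \eqref{Riemann-von mangoldt} and the $O\!\bigl(\frac{\log\log q}{\log q}\bigr)$ shortfall in the main term near $t=T$, all fit so that $C_0=982$ is admissible in both places. The remaining ingredients — the reduction to $N(t,\chi)=0$, the averaging, and the elementary asymptotics of $\frac{t}{\pi}\log\frac{qt}{2\pi e}$ — are routine.
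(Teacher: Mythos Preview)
Your argument is essentially correct and follows the same route as the paper: combine the Riemann--von Mangoldt formula with the bound of Theorem~\ref{main theorem}, and exploit the slack between the constant actually proved ($\approx 981.4$) and the stated $C_0=982$. The paper streamlines matters by working with the exact small-$t$ formula \eqref{N(t,chi) for small t}, whose main term is $\frac{t}{\pi}\log\frac{q}{\pi}$ and whose error is $o(1)$ as $t\to 0$; this makes your $M$-versus-$\kappa$ bookkeeping unnecessary (indeed, your parenthetical observation that $R(t,\chi)\to 0$ is the real point and renders the condition $\kappa>\tfrac12 M$ vacuous, so you should lead with it rather than the uncomputed $M$). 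The paper also argues directly rather than by contradiction, which avoids the slight logical wrinkle in your write-up of ``letting $q\to\infty$'' after having fixed a single counterexample $q$; to make your version rigorous you should instead assume the inequality fails along a sequence $q_n\to\infty$.
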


In proving Theorem~\ref{main theorem}, we obtain an explicit version of a zero-density estimate of Selberg \cite[Theorem 4]{Sel46} that might be of independent interest. This result is particularly effective near the half-line. Moreover, unlike most zero-density results for Dirichlet $L$-functions in the literature, here we allow $t$ to be small.

\begin{theorem}\label{theorem zero density}
    Fix $0<\eps\leq 1/4$ and $0<\kappa<\eps/2$. Then for all $q\geq q_0(\kappa)$, $\sigma\geq \frac{1}{2}+\frac{5}{8\kappa\log q}$, and $|t_1|,|t_2|\leq q^{1/4-\eps}$ with $t_2-t_1\geq \frac{1.73}{\kappa \log q}$, we have
    \[
     \sum_{\chi\neq \chi_0} N(\sigma; t_1,t_2; \chi)< \left(4.79\kappa+\frac{4.12}{2(t_2-t_1)\log q-1.73/\kappa}\right) q^{1-2\kappa(\sigma-1/2)}(t_2-t_1)\log q,
    \]
    where $N(\sigma; t_1,t_2; \chi)$ denotes the number of zeros of $L(s,\chi)$ with $\beta\geq \sigma$ and $t_1\leq \gamma\leq t_2$.  
\end{theorem}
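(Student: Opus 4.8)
The plan is to follow Selberg's classical zero-detection strategy, rendered explicit. The starting point is a mollified second moment argument. One introduces a Dirichlet polynomial $M_X(s,\chi) = \sum_{n \le X} \mu(n)\chi(n) n^{-s}$ (or a smoothed variant with coefficients tapered near $n = X$), chosen so that $L(s,\chi) M_X(s,\chi)$ is close to $1$ in mean square to the right of the critical line. Concretely, for $\sigma_0$ slightly less than the target abscissa $\sigma$, one shows that $\sum_{\chi \ne \chi_0} \int_{t_1}^{t_2} |L(\sigma_0 + it,\chi) M_X(\sigma_0+it,\chi) - 1|^2\, dt$ is small — of size roughly $q \cdot (t_2 - t_1) \log q / \log X$ up to constants — using the orthogonality of characters mod $q$ together with the standard mean-value estimate for Dirichlet polynomials $\sum_{\chi}\int |\sum_{n} a_n \chi(n) n^{-it}|^2 \ll (qT + \text{length}) \sum |a_n|^2 n^{-2\sigma_0}$. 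The length $X$ will be taken as a small power of $q$, calibrated so that $X^{1/\kappa}$-type quantities interact correctly with the $q^{1-2\kappa(\sigma - 1/2)}$ factor in the conclusion; the precise choice $\log X = (\text{const})\kappa \log q$ is what produces the leading constant $4.79\kappa$.

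Next comes the zero-detection step: if $\rho = \beta + i\gamma$ is a zero of $L(s,\chi)$ with $\beta \ge \sigma$ and $\gamma \in [t_1, t_2]$, then $L(s,\chi)M_X(s,\chi)$ vanishes there, so the analytic function $g(s,\chi) := L(s,\chi)M_X(s,\chi) - 1$ equals $-1$ at $s = \rho$. One then uses a contour integral / mean-value device — either a Gabcke-type weighted integral of $g$ against a kernel concentrated near $\rho$, or more directly the classical trick of bounding $1 = |g(\rho,\chi)|$ by an integral of $|g(\sigma_0 + it, \chi)|^2$ over a short vertical segment near height $\gamma$ (via the sub-mean-value property of $|g|^2$ on a disc, or a Montgomery-style Bessel/Fejér kernel to localize in $t$). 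Summing this pointwise lower bound over all such zeros $\rho$, and noting that each vertical coordinate $t \in [t_1,t_2]$ is covered boundedly often once one spaces the zeros (here the hypothesis $t_2 - t_1 \ge 1.73/(\kappa\log q)$ and the constant $5/(8\kappa\log q)$ in the abscissa shift enter, controlling how far left of $\sigma$ one may push $\sigma_0$ while keeping the zero inside the relevant disc), yields
\[
\sum_{\chi \ne \chi_0} N(\sigma; t_1, t_2; \chi) \ll \frac{1}{(\sigma - \sigma_0)}\sum_{\chi \ne \chi_0}\int_{t_1 - \delta}^{t_2 + \delta} |g(\sigma_0 + it,\chi)|^2\, dt,
\]
and combining with the mean-square bound from the first step gives the stated inequality after optimizing $\sigma_0$ and $X$. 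The secondary term $4.12/(2(t_2-t_1)\log q - 1.73/\kappa)$ is exactly the boundary contribution from the slight widening $[t_1 - \delta, t_2+\delta]$ of the interval relative to its length.

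The main obstacle, and where almost all the work lies, is making the mean-square estimate for $|L M_X - 1|^2$ fully explicit with good constants. This requires: (i) an explicit large sieve / orthogonality bound for Dirichlet polynomials twisted by characters mod a prime $q$, with the additive term optimized for $t$ in a \emph{short} range (including $|t_i| \le q^{1/4-\eps}$ possibly much smaller than $1$, so one cannot absorb the $+$length term trivially); (ii) an explicit approximate functional equation or truncated Dirichlet series for $L(\sigma_0 + it,\chi)$ valid in the strip $\sigma_0 > 1/2$ with error controlled uniformly in the short $t$-range — this is the step that forces $|t_i| \le q^{1/4 - \eps}$, mirroring the same constraint in Selberg's \eqref{Selberg bound}; and (iii) careful bookkeeping of the convolution $\mu * 1$ coefficients of $L M_X$ so that only the tail $n > X$ survives and contributes the $1/\log X$ saving. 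I would handle (ii) by a Perron-type contour shift with an explicit bound on $L(s,\chi)$ on a vertical line just left of $\sigma = 1/2$ coming from the convexity bound, and I expect the tracking of constants through the Cauchy–Schwarz steps and the disc sub-mean-value inequality to be the most delicate part of getting $4.79$ rather than a larger number.
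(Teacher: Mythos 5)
Your opening step is essentially the paper's: the mollifier you call $M_X$ plays the role of Selberg's $\psi(s,\chi)=\sum_{n<\xi^2}\lambda_n\chi(n)n^{-s}$ with $\xi=q^{\kappa}$, and the statement that $L\cdot(\text{mollifier})$ is close to $1$ in mean square over $\chi$ just to the right of the half-line is the content of Lemma~\ref{lemma L*psi}, which rests on Selberg's explicit twisted second moment of $L$ and on asymptotics for M\"obius sums (Lemmas~\ref{lemma mu(n)}--\ref{lemma lambda_n gcd}). Where you diverge is the zero-extraction mechanism. The paper does not detect zeros by the inequality $|LM_X-1|=1$ at a zero plus a sub-mean-value bound on discs; it applies the Littlewood-type Lemma~\ref{lemma littlewood} to $L\psi$ on a half-strip with endpoints shifted by $a/\log q$ and $b/\log q$, bounds the weighted zero sum below by $N(\sigma;t_1,t_2;\chi)$ using $\pi x<\sinh(\pi x)$ and the sine lower bound, and converts $\log|L\psi|$ to $|L\psi|^2-1$ via $\log|x|\le(|x|^2-1)/2$, so that the pointwise-in-$t$, character-averaged bound $6.20\,\xi^{1-2\sigma}$ can be inserted directly. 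The constants $4.79$, $4.12$, $1.73/\kappa$ and the abscissa shift $\tfrac{5}{8\kappa\log q}$ all come from optimizing $a,b$ in that identity; no zero-clustering or overlap bookkeeping is needed at any point.

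For the statement as actually claimed (an explicit inequality with those specific constants), your route has genuine gaps. First, with $\sigma$ as small as $\tfrac12+\tfrac{5}{8\kappa\log q}$ the discs available for the sub-mean-value step have radius only $\asymp 1/(\kappa\log q)$, and your phrase ``each vertical coordinate is covered boundedly often'' requires an explicit upper bound on the number of zeros of $L(s,\chi)$ in a disc of that radius; this is an additional ingredient you neither state nor prove, and its constant multiplies your final bound, so there is no reason the bookkeeping would land anywhere near $4.79\kappa$. Second, when $t_2-t_1\asymp 1/\log q$ the hybrid mean value $(qT+\mathrm{length})$ you invoke gains nothing from the $t$-integration, so the entire burden falls on character orthogonality together with an explicit representation of $L(\sigma_0+it,\chi)$ near the half-line for small $|t|$; this is exactly the hard explicit input (Selberg's Theorem 1 and Lemma 7, which the paper imports), and you defer it to ``a Perron-type contour shift with the convexity bound'' without constants. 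Finally, your interpretation of the secondary term $4.12/(2(t_2-t_1)\log q-1.73/\kappa)$ as ``exactly the boundary contribution from widening $[t_1,t_2]$'' is not how it arises: in the paper it comes from the horizontal-edge ($\sinh$-weighted $\sigma$-integral) terms in the Littlewood identity, while the widening parameter $a$ is absorbed into the optimization of the leading constant. So the proposal sketches a plausible path to a zero-density bound of the same shape, but it does not prove, and as written could not produce, the explicit inequality of Theorem~\ref{theorem zero density}.
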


In addition to \eqref{Hughes Rudnick first zero}, Hughes and Rudnick \cite[Corollary 8.3]{HuRu03} gave a conditional lower bound on the proportion of $L$-functions having small first zeros by computing higher moments of the one-level density. Their result was improved by the second author in \cite[Theorem 7]{Zhao1}, and subsequently also in \cite[Corollary 2]{Zhao2}. In particular, the idea in the latter work is to estimate the mean square $\bb{E}[\wt{S}(t,\chi)^2]$ under GRH and exploit \eqref{Riemann-von mangoldt} again. Following the same strategy, we are able to provide an unconditional lower bound for the proportion being considered.

\begin{theorem}\label{theorem: mean square S(t,chi)}
    Fix $\beta>0$. For all $q\geq q_0(\beta)$ and $|t|\leq \frac{2\pi\beta}{\log q}$,
    \[
    \bb{E}[\wt{S}(t,\chi)^2] <
    \left(2C_0+\frac{\sqrt{2}}{\pi}\sqrt{\int_0^{\frac{3\beta}{50}}\frac{\sin(2\pi y)^2}{y}\md y}\right)^2
    \]
    where $C_0=1075$ is the constant from Theorem~\ref{main theorem}. Furthermore, for all $\beta\geq 0$ and $|t|\leq \frac{2\pi\beta}{\log q}$,
    \begin{equation}\label{E[S(t,chi)^2] asymptotics}
        \limsup_{q\to \infty} \bb{E}[\wt{S}(t,\chi)^2] \leq \frac{\log (\beta+3)}{\pi^2}+O\left(\sqrt{\log(\beta+3)}\right)
    \end{equation}
    where the implied constant is absolute.
\end{theorem}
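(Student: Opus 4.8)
The plan is to reduce the mean square of $\wt S(t,\chi)$ to two pieces: a contribution governed by the first moment bound of Theorem~\ref{main theorem} (which handles the ``large'' part near the critical line after the usual truncation of the Dirichlet series representation), and an explicit oscillatory integral coming from the short-sum part. First I would recall the Selberg-type representation of $S(t,\chi)$ in terms of a sum over prime powers $\sum_{n\le x}\Lambda(n)\chi(n)n^{-1/2-it}/\log n$ plus an error controlled by the zero-density estimate of Theorem~\ref{theorem zero density}; squaring $\wt S(t,\chi)=S(t,\chi)+S(t,\ov\chi)$ and averaging over the $q-2$ non-principal $\chi$, the cross terms and diagonal terms in the prime-power sum are evaluated by orthogonality of characters modulo the prime $q$. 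The off-diagonal terms $m\ne n$ with $mn<q$ vanish on average, so the diagonal $m=n$ survives and produces a main term of the shape $\frac{2}{\pi^2}\sum_{n\le x}\frac{\Lambda(n)^2}{n\,\log^2 n}\bigl(1 + \cos(2t\log n) \text{-type factors}\bigr)$, which after partial summation and the prime number theorem is $\sim \frac{\log x}{\pi^2}$ plus an oscillatory part. Choosing $x$ as a small power of $q$ (so that $\log x \asymp \log q$) and using $|t|\le 2\pi\beta/\log q$ converts the oscillatory diagonal term into the integral $\int_0^{3\beta/50}\frac{\sin(2\pi y)^2}{y}\,\md y$ after the substitution $y = \frac{\log n}{\log q}\cdot(\text{const})$; the precise constant $3/50$ should emerge from the admissible choice of $x=q^{3/25}$ or similar dictated by the constraints $\sigma\ge \frac12+\frac{5}{8\kappa\log q}$ and $t_2-t_1\ge \frac{1.73}{\kappa\log q}$ in Theorem~\ref{theorem zero density} when one optimizes $\kappa$.

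For the first explicit bound, I would split $\wt S = \wt S_{\le x} + (\wt S - \wt S_{\le x})$, bound the mean square of the tail using Theorem~\ref{main theorem} together with Cauchy--Schwarz — more precisely, $\mathbb{E}[(\wt S - \wt S_{\le x})^2]$ is controlled by $\mathbb{E}[|\wt S - \wt S_{\le x}|]\cdot \max|\wt S - \wt S_{\le x}|$, but since the maximum is again $O(\log q)$ one instead needs a genuine second-moment input. The cleaner route: write $\mathbb{E}[\wt S^2] \le \bigl(\sqrt{\mathbb{E}[\wt S_{\le x}^2]} + \sqrt{\mathbb{E}[(\wt S - \wt S_{\le x})^2]}\bigr)^2$ by Minkowski, identify $\sqrt{\mathbb{E}[\wt S_{\le x}^2]}$ with the explicit integral term $\frac{\sqrt 2}{\pi}\sqrt{\int_0^{3\beta/50}\sin(2\pi y)^2 y^{-1}\md y}$ up to lower order, and bound $\sqrt{\mathbb{E}[(\wt S - \wt S_{\le x})^2]}$ by $2C_0$ using the fact that the error term $\wt S - \wt S_{\le x}$ is, on average over $\chi$, controlled by exactly the same zero-counting quantity that Theorem~\ref{main theorem} bounds by $C_0$ (the factor $2$ being $\|\wt S\|$ versus $\|S\|$, i.e. $\wt S = S(\cdot,\chi)+S(\cdot,\ov\chi)$). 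This is where the constant $2C_0$ in the statement comes from, and making the implication ``$\mathbb{E}[S]\le C_0 \Rightarrow \mathbb{E}[(\wt S - \wt S_{\le x})^2]^{1/2}\le 2C_0$'' rigorous — rather than merely $O(C_0)$ — is the main obstacle, since a priori a second moment is not dominated by a first moment; the resolution must use that the pointwise defect $\wt S - \wt S_{\le x}$ is nonnegative-definite-ish, coming from $N(\sigma;\cdot)$-counts whose average is small, so that its $L^2$ and $L^1$ averages over $\chi$ are comparable after exploiting $N\ge 0$.

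For the asymptotic statement \eqref{E[S(t,chi)^2] asymptotics}, I would let $q\to\infty$ first and then optimize $x$: the diagonal main term gives $\frac{\log x}{\pi^2}\cdot(1+o(1))$ with the oscillatory part contributing $O(\sqrt{\log x})$ after a stationary-phase or second-moment estimate of $\int_0^{T}\frac{\sin(2\pi y)^2}{y}\md y = \frac12\log T + O(1)$ scaled appropriately, while the error from Theorem~\ref{theorem zero density} with $\kappa\to 0$ becomes negligible relative to $\log x$; choosing $\log x \asymp \log(\beta+3)$ (since for $|t|\le 2\pi\beta/\log q$ the effective oscillation length is $\beta$, and pushing $x$ beyond $e^{\log(\beta+3)}$ buys nothing) yields the claimed $\frac{\log(\beta+3)}{\pi^2} + O(\sqrt{\log(\beta+3)})$. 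The main technical obstacle throughout is uniformity: the zero-density estimate must be strong enough, for the small values of $\sigma - 1/2 \asymp 1/\log q$ and the short window $t_2 - t_1 \asymp 1/\log q$ relevant here, to make the tail $\wt S - \wt S_{\le x}$ contribute only a bounded amount on average — this is precisely the regime Theorem~\ref{theorem zero density} was engineered for, so the argument should go through, but tracking the constants $4.79\kappa$ and $4.12$ through the optimization to land on the clean $3/50$ and $2C_0$ will require care.
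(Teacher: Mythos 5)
Your overall architecture matches the paper's proof: split $\wt S$ via Minkowski into the approximation error $S(t,\chi)-\frac{1}{\pi}\Im\sum_{p<x^3}\chi(p)p^{-1/2-it}$ (and its counterpart for $\ov\chi$, whence the factor $2$) plus the explicit prime sum, then evaluate the prime-sum piece by orthogonality and the prime number theorem, the diagonal producing $\frac{2}{\pi^2}\int_2^{x^3}\frac{\sin(t\log y)^2}{y\log y}\md y$, which with $|t|\le \frac{2\pi\beta}{\log q}$ and $x^3<q^{3/50}$ becomes $\frac{2}{\pi^2}\int_0^{3\beta/50}\frac{\sin(2\pi y)^2}{y}\md y$. (Minor corrections: the exponent $3/50$ is not the outcome of a fresh optimization but simply $x^3=q^{3\delta\kappa}<q^{3\cdot 0.16/8}$ with the same parameters $\delta=0.16$, $\kappa<1/8$ already fixed in the proof of Theorem~\ref{main theorem}; and in \eqref{E[S(t,chi)^2] asymptotics} one does not re-choose $x$ with $\log x\asymp\log(\beta+3)$ — $x$ remains a fixed power of $q$, and the $\log(\beta+3)$ comes solely from $\int_0^{3\beta/50}\frac{\sin(2\pi y)^2}{y}\md y=\tfrac12\log\beta+O(1)$ via integration by parts.)

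The genuine gap is exactly the step you flagged: bounding $\bb{E}\big[\big(S(t,\chi)-\frac{1}{\pi}\Im\sum_{p<x^3}\chi(p)p^{-1/2-it}\big)^2\big]^{1/2}$ by $C_0$. Your proposed route — deduce it from the first-moment bound of Theorem~\ref{main theorem} by arguing the defect is ``nonnegative-definite-ish'' so its $L^1$ and $L^2$ averages over $\chi$ are comparable — does not work: the defect is a genuinely oscillating quantity (it is not a count of zeros, and $S(t,\chi)$ itself changes sign), and no positivity is available to upgrade a first moment to a second moment. The paper's resolution is that no such upgrade is needed, because the inference runs in the opposite direction: Proposition~\ref{prop: selberg theorem 7} (the explicit version of Selberg's Theorem~7, proved from the pointwise approximation of Proposition~\ref{prop: approx S(t,chi)}, the zero-density input through Lemma~\ref{lemma 17 v2}, and the mean-value Lemma~\ref{lemma square prime sum}) gives a direct second-moment bound $D(\eta,\delta,\kappa,1,1/4)$ for this defect, and the constant $C_0$ of Theorem~\ref{main theorem} is itself essentially defined as $\sqrt{D(\eta,\delta,\kappa,1,1/4)}$ (Theorem~\ref{main theorem} is deduced from $D$ by Cauchy--Schwarz). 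So in the proof of Theorem~\ref{theorem: mean square S(t,chi)} one simply reuses $D<C_0^2$ for each of the two error terms, getting $2C_0$ with no ``first moment $\Rightarrow$ second moment'' implication. Without invoking Proposition~\ref{prop: selberg theorem 7} (or reproving an equivalent mean-square approximation), your argument cannot close.
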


\begin{corollary}\label{corollary 2}
    For $\beta>C_0$,
    \[
    \liminf_{\substack{q\to \infty}} \frac{1}{q-2}\#\bigg\{\chi\neq \chi_0: \frac{|\gamma_{\chi,0}|\log q}{2\pi}<\beta\bigg\}\\
    >
    \dfrac{(2\beta-2C_0)^2}{\displaystyle 4\beta^2-8C_0\beta+\left(2C_0+\frac{\sqrt{2}}{\pi}\sqrt{\int_0^{\frac{3\beta}{50}}\frac{\sin(2\pi y)^2}{y}\md y}\right)^2}.
    \]
\end{corollary}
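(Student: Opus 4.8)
The plan is to adapt the mean-square strategy of \cite{Zhao2} — which estimates $\bb{E}[\wt{S}(t,\chi)^2]$ and feeds the estimate into the Riemann--von Mangoldt formula \eqref{Riemann-von mangoldt} — to the unconditional setting, now using Theorems~\ref{main theorem} and \ref{theorem: mean square S(t,chi)} in place of the GRH inputs. Fix $\beta>C_0$ and set $t=\frac{2\pi\beta}{\log q}$, so that $\frac{t}{\pi}\log\frac{qt}{2\pi e}=2\beta+o(1)$ as $q\to\infty$ and, for all $q$ large, both $|t|\le 1$ and $|t|\le q^{1/4-\eps}$, so that the two theorems apply. Write $\mathcal{G}=\{\chi\ne\chi_0:\tfrac{|\gamma_{\chi,0}|\log q}{2\pi}<\beta\}$, $p=\frac{|\mathcal{G}|}{q-2}$, and let $\mathcal{B}$ be the set of remaining non-principal characters. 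The elementary but crucial point is that $\chi\in\mathcal{B}$ forces $L(s,\chi)$ to have no zero with $|\gamma|<t$, hence $N(t,\chi)=0$; since the error term in \eqref{Riemann-von mangoldt} is $o(1)$ as $t\to 0$, and after discarding (by a routine limiting argument in $\beta$) the negligible set of $\chi$ having a zero exactly at height $t$, \eqref{Riemann-von mangoldt} yields
\[
\wt{S}(t,\chi)=-\tfrac{t}{\pi}\log\tfrac{qt}{2\pi e}+o(1)=-2\beta+o(1)\qquad(\chi\in\mathcal{B}),
\]
uniformly in $\chi$, and hence also $\wt{S}(t,\chi)^2=4\beta^2+o(1)$ for such $\chi$.

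Next I would localize the two moment bounds to $\mathcal{G}$. Since $|\bb{E}[\wt{S}(t,\chi)]|=2|\bb{E}[S(t,\chi)]|<2C_0$ by Theorem~\ref{main theorem}, subtracting the contribution of $\mathcal{B}$ gives
\[
\frac{1}{q-2}\sum_{\chi\in\mathcal{G}}\wt{S}(t,\chi)=\bb{E}[\wt{S}(t,\chi)]+(2\beta+o(1))(1-p)>-2C_0+(2\beta+o(1))(1-p),
\]
while with $M:=2C_0+\frac{\sqrt{2}}{\pi}\sqrt{\int_0^{3\beta/50}\frac{\sin(2\pi y)^2}{y}\md y}$, Theorem~\ref{theorem: mean square S(t,chi)} gives $\bb{E}[\wt{S}(t,\chi)^2]<M^2$, so that
\[
\frac{1}{q-2}\sum_{\chi\in\mathcal{G}}\wt{S}(t,\chi)^2<M^2-4\beta^2(1-p)+o(1).
\]
If $p<1-\tfrac{C_0}{\beta}$ — the complementary range $p\ge1-\tfrac{C_0}{\beta}$ being treated directly — then the right side of the first display is positive for $q$ large, so squaring it and invoking Cauchy--Schwarz in the form $\big(\sum_{\chi\in\mathcal{G}}\wt{S}(t,\chi)\big)^2\le|\mathcal{G}|\sum_{\chi\in\mathcal{G}}\wt{S}(t,\chi)^2$ gives
\[
\big(2\beta(1-p)-2C_0+o(1)\big)^2<\big(M^2-4\beta^2(1-p)+o(1)\big)\,p.
\]

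Finally I would let $q\to\infty$ to clear the $o(1)$'s and read the limiting inequality as a constraint on $r:=1-p$. Upon expanding, the $r^2$-terms cancel, leaving the linear inequality $r\big(M^2+4\beta^2-8C_0\beta\big)\le M^2-4C_0^2$; since $M^2+4\beta^2-8C_0\beta=(2\beta-2C_0)^2+(M^2-4C_0^2)>0$, this rearranges to
\[
\liminf_{q\to\infty}p\ \ge\ 1-\frac{M^2-4C_0^2}{M^2+4\beta^2-8C_0\beta}=\frac{(2\beta-2C_0)^2}{4\beta^2-8C_0\beta+M^2},
\]
the asserted lower bound, the strict inequality of the corollary being recovered from the strictness of $\bb{E}[\wt{S}(t,\chi)^2]<M^2$. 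I expect the most delicate point to be not this final computation but the handling of \eqref{Riemann-von mangoldt} as $t\to0$ — verifying that its error term is genuinely $o(1)$ uniformly in $\chi$, and disposing of the characters whose lowest zero sits exactly at height $t$ — together with confirming that the complementary regime $p\ge1-C_0/\beta$ does not weaken the stated bound.
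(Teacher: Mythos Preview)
Your argument follows the same strategy as the paper's: invoke the precise Riemann--von Mangoldt formula for small $t$ (the paper's \eqref{N(t,chi) for small t}) to identify $\mathcal{G}=\mc{Q}_\beta$ with $\{\chi:\wt{S}(t,\chi)+2\beta+o(1)>0\}$, and then feed the moment bounds of Theorems~\ref{main theorem} and \ref{theorem: mean square S(t,chi)} into a Cauchy--Schwarz inequality. The only organizational difference is that the paper applies Cauchy--Schwarz directly to the nonnegative quantity $N(t,\chi)=\wt{S}(t,\chi)+2\beta+o(1)$ over \emph{all} non-principal $\chi$, obtaining the explicit Paley--Zygmund bound
\[
\frac{\#\mathcal{G}}{q-2}\ \ge\ \frac{\big(2\beta+\bb{E}[\wt{S}]\big)^2}{4\beta^2+4\beta\,\bb{E}[\wt{S}]+\bb{E}[\wt{S}^2]},
\]
whereas you first subtract the $\mathcal{B}$-contribution and apply Cauchy--Schwarz to $\wt{S}$ over $\mathcal{G}$; after unwinding your implicit inequality in $p$, the two routes are algebraically equivalent.

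Your flagged concern about the complementary range is warranted and, as written, is a gap. For large $\beta$ one has $M=2C_0+O(\sqrt{\log\beta})$, hence $M^2<4C_0\beta$; a line of algebra shows this is precisely the condition under which $1-C_0/\beta$ lies \emph{below} the target bound $(2\beta-2C_0)^2/(4\beta^2-8C_0\beta+M^2)$. Thus the case $p\ge 1-C_0/\beta$ cannot be ``treated directly'' by majorization, and your dichotomy does not close. The paper's formulation sidesteps this entirely: since $N(t,\chi)\ge 0$, the sums $\sum_\chi N$ and $\sum_\chi N^2$ already localize to $\mathcal{G}$ without any hypothesis on $p$, so no case split arises; one then substitutes $\bb{E}[\wt{S}]\to -2C_0$ in numerator and denominator of the displayed ratio simultaneously to reach the stated bound in one stroke.
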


In particular, for any $\beta>C_0$ a positive proportion of Dirichlet $L$-functions have the height of their first zero below $\beta$ times the average spacing, and due to \eqref{E[S(t,chi)^2] asymptotics} this proportion tends to 1 as $\beta\to \infty$ at a rate of at least $1-O(\frac{\log \beta}{\beta^2})$.

\subsection{Notations}
Throughout $s=\sigma+it$ denotes a complex number, $\rho=\beta+i\gamma$ denotes a non-trivial zero of $L(s,\chi)$, $C$ and $q_0$ are positive constants and may not mean the same at each occurrence. As usual, $\mu(n)$ and $\Lambda(n)$ denote the M\"{o}bius function and von Mangoldt's function, respectively.

\section{Proofs of Corollaries ~\ref{corollary 1} and ~\ref{corollary 2}}

We first prove the two corollaries on low-lying zeros of $L(s,\chi)$. They follow from Theorems~\ref{main theorem} and ~\ref{theorem: mean square S(t,chi)} in straightforward ways, along the same lines as the proofs of \cite[Corollaries 1 and 2]{Zhao2}.

By the argument principle and the functional equation for $L(s,\chi)$, 
\begin{equation}\label{N(t,chi) for small t}
    \begin{split}
    N(t,\chi)=&\frac{t}{\pi}\log \frac{q}{\pi} + \wt{S}(t,\chi) +       \frac{1}{2\pi} \int_{-t}^t \frac{\Gamma'}{\Gamma}\left(\frac{1}{4}+\frac{\delta_\chi}{2}+i\frac{u}{2}\right)\md u\\
    =&\frac{t}{\pi}\log \frac{q}{\pi} + \wt{S}(t,\chi) +o(1), \quad t\to 0.
    \end{split}
\end{equation}
where $\Gamma$ is the usual Gamma function. This formula is more precise than \eqref{Riemann-von mangoldt} as $t\to 0$. Averaging both sides over non-principal $\chi\bmod q$ and applying the lower bound $\bb{E}[\tilde{S}(t,\chi)]>-2C_0$ provided by Theorem~\ref{main theorem}, we see that $\sum_{\chi\neq \chi_0} N(t,\chi)>0$ when $q\geq q_0$ and $\frac{t}{2\pi}\log \frac{q}{\pi}>C_0$. In fact the last statement is true when $\frac{t}{2\pi}\log \frac{q}{\pi}>C_0-\delta$ for a tiny positive number $\delta$ since we left a little room to spare in the statement of Theorem ~\ref{main theorem}. This yields Corollary~\ref{corollary 1}. 

Now fix $\beta\geq C_0$ and consider the subset of characters
\[
\mc{Q}_\beta:=\left\{\chi \neq \chi_0:  \frac{|\gamma_{\chi,0}|\log q}{2\pi}<\beta \right\}.
\]
Setting $t=\frac{2\pi\beta}{\log q}$ in \eqref{N(t,chi) for small t}, we have
\[
\mc{Q}_\beta=\left\{\chi\neq \chi_0: \wt{S}\left(\frac{2\pi\beta}{\log q},\chi\right)+2\beta+o(1)>0\right\}
\]
where the $o(1)$ term tends to 0 as $q\to \infty$. Using the Cauchy\textendash Schwarz inequality and completing the sum over characters using non-negativity, we bound
\begin{align*}
    \# \mc{Q}_\beta \geq \frac{\left[\sum_{\chi\neq \chi_0}\left(\wt{S}(\frac{2\pi\beta}{\log q},\chi)+2\beta+o(1)\right)\right]^2}{\sum_{\chi\neq \chi_0}\left(\wt{S}(\frac{2\pi\beta}{\log q},\chi)+2\beta+o(1))\right)^2},
\end{align*}
and consequently
\begin{align*}
    \liminf_{q\to \infty} \frac{\#\mc{Q}_\beta}{q-2}\geq & \liminf_{q\to \infty}  \frac{(2\beta+\bb{E}[\wt{S}(\frac{2\pi\beta}{\log q},\chi)])^2}{4\beta^2+4\beta\cdot \bb{E}[\wt{S}(\frac{2\pi\beta}{\log q},\chi)]+\bb{E}[\wt{S}(\frac{2\pi\beta}{\log q},\chi)^2]}\\
    \geq & \liminf_{q\to \infty} \frac{(2\beta-2C_0)^2}{4\beta^2-8C_0\beta+\bb{E}[\wt{S}(\frac{2\pi\beta}{\log q},\chi)^2]}.
\end{align*}
Now we readily obtain Corollary~\ref{corollary 2} from Theorem~\ref{theorem: mean square S(t,chi)}.

\section{Proof of the zero-density Theorem~\ref{theorem zero density}}

The goal of this section is to establish Theorem~\ref{theorem zero density}, a key ingredient in the proof of Theorem~\ref{main theorem}. In preparation, we refine and make explicit a number of results in \cite[\S{4}]{Sel46}. 
Roughly speaking, the strategy is to bound the number of zeros of $L(s,\chi)$ by that of $L\cdot \psi$ where $\psi$ is a mollifier constructed such that $L\cdot \psi\approx 1$ to the right of the half-line when averaged over $\chi\bmod q$. By a Littlewood-type Lemma~\ref{lemma littlewood}, it then suffices to control $\log|L\cdot \psi|$, which is small on average.

\subsection{Some sums involving the M\"{o}bius function}
\begin{lemma}[c.f. {\cite[Lemma 11]{Sel46}}] \label{lemma mu(n)}
    Let $x>1$ and $r$ be a positive integer. For $\ell\in\{1,2\}$, define
    \[
    M_\ell(r,x):=\sum_{\substack{n<x \\ (r,n)=1}}\frac{\mu(n)}{n}\left(\log \frac{x}{n}\right)^\ell.
    \]
    Then
    \begin{align*}
        M_\ell(r,x)=&\ell! \Bigg(\log x-\gamma_0-\sum_{p|r}\frac{\log p}{p-1}\Bigg)^{\ell-1}\prod_{p|r}\frac{1}{1-p^{-1}}+O\Bigg(e^{-C\sqrt{\log x}} \prod_{p|r}\frac{1}{1-p^{-3/4}}\Bigg)
    \end{align*}
    where $\gamma_0=0.577\ldots$ is Euler's constant.
\end{lemma}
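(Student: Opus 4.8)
The plan is to derive the estimate by contour integration, which is essentially Selberg's route refined quantitatively. First I would record the Dirichlet series identity
\[
\sum_{(n,r)=1}\frac{\mu(n)}{n^{s}}=\frac{1}{\zeta(s)}\prod_{p\mid r}\frac{1}{1-p^{-s}},\qquad \Re(s)>1,
\]
and, using the Mellin--Perron formula $\frac{1}{2\pi i}\int_{(c)}\frac{y^{s}}{s^{\ell+1}}\md s=\frac{1}{\ell!}(\log y)^{\ell}$ for $y\ge 1$ (and $0$ for $0<y<1$), which is exact for every $c>0$ since $\ell+1\ge 2$, write
\[
M_\ell(r,x)=\frac{\ell!}{2\pi i}\int_{(c)}\frac{x^{s}}{s^{\ell+1}}\cdot\frac{1}{\zeta(1+s)}\prod_{p\mid r}\frac{1}{1-p^{-1-s}}\,\md s ,\qquad c=\tfrac{1}{\log x}.
\]
The interchange of sum and integral is justified by absolute convergence on $\Re(s)=c$.

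Next I would shift the line of integration to a contour $\mc{C}$ hugging the boundary of the classical zero-free region of $\zeta$, namely $\Re(s)=-c_{1}/\log(|\Im(s)|+2)$, capped off by horizontal segments at heights $\pm T$. Between $\mc{C}$ and the line $\Re(s)=c$ the only singularity of the integrand is a pole of order $\ell$ at $s=0$ (the factor $s^{-\ell-1}$ being partially cancelled by the zero of $1/\zeta(1+s)$ there, since $\zeta$ has a simple pole at $1$). Extracting its residue from the Laurent expansions
\[
\frac{1}{\zeta(1+s)}=s-\gamma_{0}s^{2}+O(s^{3}),\qquad \prod_{p\mid r}\frac{1}{1-p^{-1-s}}=\Bigl(\prod_{p\mid r}\frac{1}{1-p^{-1}}\Bigr)\Bigl(1-s\sum_{p\mid r}\frac{\log p}{p-1}+O(s^{2})\Bigr),\qquad x^{s}=e^{s\log x},
\]
i.e.\ reading off the coefficient of $s^{\ell}$ in $\ell!\,e^{s\log x}\cdot\frac{1}{\zeta(1+s)}\prod_{p\mid r}\frac{1}{1-p^{-1-s}}$, reproduces exactly the claimed main term $\ell!\bigl(\log x-\gamma_{0}-\sum_{p\mid r}\frac{\log p}{p-1}\bigr)^{\ell-1}\prod_{p\mid r}\frac{1}{1-p^{-1}}$ for $\ell\in\{1,2\}$.

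It then remains to bound the integral over $\mc{C}$. Here I would use the classical bound $|\zeta(1+s)|^{-1}\ll\log(|\Im(s)|+2)$ valid on $\mc{C}$, the trivial estimate $|x^{s}|=x^{\Re(s)}$, and --- choosing $c_{1}$ small enough that $\Re(1+s)\ge 3/4$ everywhere on $\mc{C}$ --- the monotonicity bound
\[
\Bigl|\prod_{p\mid r}\frac{1}{1-p^{-1-s}}\Bigr|\le\prod_{p\mid r}\frac{1}{1-p^{-\Re(1+s)}}\le\prod_{p\mid r}\frac{1}{1-p^{-3/4}},
\]
which is precisely the source of the $r$-dependent factor in the error term. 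Combined with the convergence supplied by $|s|^{-\ell-1}$ (here $\ell+1\ge 2$), splitting $\mc{C}$ at $|\Im(s)|=T$ gives a bound $\ll\prod_{p\mid r}(1-p^{-3/4})^{-1}\bigl(x^{-c_{1}/\log(T+2)}+T^{-\ell}\bigr)$ (the horizontal caps contribute negligibly), and optimizing with $\log T\asymp\sqrt{\log x}$ yields the stated $O\bigl(e^{-C\sqrt{\log x}}\prod_{p\mid r}(1-p^{-3/4})^{-1}\bigr)$, the factor $\ell!$ being harmless since $\ell\in\{1,2\}$.

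The main obstacle, in a paper aiming at explicit constants, is uniformity: one must fix the zero-free-region parameter $c_{1}$ (hence the region where $\Re(1+s)\ge 3/4$), pin down an explicit bound for $1/\zeta(1+s)$ on $\mc{C}$, and verify that $C$ and all implied constants are independent of $r$, the full $r$-dependence being carried by the Euler product factor $\prod_{p\mid r}(1-p^{-3/4})^{-1}$. A secondary, purely bookkeeping difficulty is carrying the Laurent expansions far enough ($O(s^{3})$ suffices for $\ell\le 2$) so that the constants $\gamma_{0}$ and $\sum_{p\mid r}\frac{\log p}{p-1}$ appear exactly as stated; for larger $\ell$ one would also pick up lower-order powers of $\log x$, which is why the statement is restricted to $\ell\in\{1,2\}$.
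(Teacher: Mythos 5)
Your proposal is correct and follows essentially the same route as the paper: the Perron-type integral for $M_\ell(r,x)$, a shift to the boundary of the classical zero-free region staying right of $s=-1$, the residue at the order-$\ell$ pole at $s=0$ producing the main term with $\gamma_0$ and $\sum_{p\mid r}\frac{\log p}{p-1}$, and the bound $1/\zeta(1+s)\ll\log(|\Im s|+2)$ together with $\prod_{p\mid r}|1-p^{-1-s}|^{-1}\le\prod_{p\mid r}(1-p^{-3/4})^{-1}$ and the split/optimization $\log T\asymp\sqrt{\log x}$ for the error. Your minor variations (line at $c=1/\log x$ instead of $c=1$, truncated shift with horizontal caps instead of shifting the full line) are immaterial.
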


\begin{remark}
    In comparison to \cite[Lemma 11]{Sel46}, here we obtain an asymptotic expression (instead of just an upper bound) for $M_\ell(r,x)$, which is critical to establishing several asymptotic formulas for sums involving $\mu(n)$ in the next few lemmas. This is achieved by a more careful choice of contour when evaluating a complex integral.
\end{remark}

\begin{proof}
    From the identities
    \[
    \frac{\ell!}{2\pi i}\int_{1-i\infty}^{1+i\infty}\frac{u^s}{s^{\ell+1}}\md s=
    \begin{cases}
        0 & 0\leq u\leq 1,\\
        (\log u)^\ell & u\geq 1
    \end{cases}
    \]
    and
    \[
    \sum_{\substack{n=1\\ (r,n)=1}}^\infty \frac{\mu(n)}{n^{1+s}}=\frac{1}{\zeta(1+s)}\prod_{p|r}\frac{1}{1-p^{-1-s}},\quad \sigma>0,
    \]
    we arrive at the Perron-type formula
    \begin{align*}
        \sum_{\substack{n<x \\ (r,n)=1}}\frac{\mu(n)}{n}\left(\log \frac{x}{n}\right)^\ell=&\frac{\ell!}{2\pi i}\int_{1-i\infty}^{1+i\infty}\frac{x^s}{s^{\ell+1}\zeta(1+s)}\prod_{p|r}\frac{1}{1-p^{-1-s}}\md s.
    \end{align*}
    We deform the line of integration to $\Gamma$, the boundary of a zero-free region of $\zeta(s)$ (the classical zero-free region suffices for our purposes) shifted to the left by 1, such that $\Gamma$ stays to the right of the pole of $\prod_{p|r}(1-p^{-1-s})^{-1}$ at $s=-1$.
    For example, according to \cite{MTY24} one can take $\Gamma=\{s: \sigma=-C_1/\log(|t|+3), \: t\in \bb{R}\}$ where $C_1=1/5.56$.
    The residue theorem then gives
    \begin{align*}
         M_\ell(r,x)=&R_\ell+\tilde{R}_\ell
    \end{align*}
    where
    \[
    R_1:\prod_{p|r}\frac{1}{1-p^{-1}},\quad R_2:=2\Bigg(\log x-\gamma_0-\sum_{p|r}\frac{\log p}{p-1}\Bigg)\prod_{p|r}\frac{1}{1-p^{-1}},
    \]
    \[
    \tilde{R}_\ell:=\frac{\ell!}{2\pi i}\int_{\Gamma}\frac{x^s}{s^{\ell+1}\zeta(1+s)}\prod_{p|r}\frac{1}{1-p^{-1-s}}\md s.
    \]
    \footnote{The constant $\gamma_0$ appears in $R_2$ because of the asymptotic expansion $\zeta(1+s)=s^{-1}+\gamma_0+O(|s|)$ as $s\to 0$.} By \cite[Theorem 3.11]{Titch}, $\frac{1}{\zeta(1+s)}\ll \log (|t|+3)$ for $s\in \Gamma$. Also $\sigma>-1/4$ for $s\in \Gamma$, and hence
    \begin{align*}
        |\tilde{R}_\ell|\ll \int_0^\infty \frac{x^{-C_1/\log(|t|+3)}\log(|t|+3)}{(|t|+3)^{\ell+1}} \md t \cdot \prod_{p|r}\frac{1}{1-p^{-3/4}}.
    \end{align*}
    To estimate the integral, we split it into two parts so that 
    \[
    \int_0^\Delta+\int_\Delta^\infty \ll \Delta x^{-C_1/\log \Delta}+\frac{\log \Delta}{\Delta^\ell}.
    \]
    Choosing $\Delta= e^{C_2\sqrt{\log x}}$ with $C_2=\sqrt{C_1/2}$, we see that the above is $\ll e^{-C_3\sqrt{\log x}}$ for any $C_3<C_2$. This proves the lemma. 
\end{proof}

From now on $\xi$ denotes a real number that is $\geq 2$. Define
\begin{equation}\label{def: lambda_n}
    \lambda_n=\lambda_n(\xi):=
    \begin{cases}
        \mu(n) & 1\leq n\leq \xi,\\
        \mu(n)\frac{\log (\xi^2/n)}{\log \xi} & \xi\leq n\leq \xi^2. 
    \end{cases}
\end{equation}

\begin{lemma}[c.f. {\cite[Lemma 12]{Sel46}}]\label{lemma lambda_n}
    For some appropriate constant $C>0$ and $\ell\in\{1,2\}$, put
    \[
    \mc{E}_\ell(r):=e^{-C\sqrt{\log(\xi^\ell/r)}}  \prod_{p|r} \frac{1}{1-p^{-3/4}}.
    \]
    Then
    \begin{equation}\label{lambda_n/n}
        \sum_{\substack{n<\xi^2\\ r|n}} \frac{\lambda_n}{n}=\frac{\mu(r)}{r\log \xi}\cdot
        \begin{cases}
            O(\mc{E}_1(r)) & 1\leq r\leq \xi,\\
            \displaystyle
            \prod_{p|r} \frac{1}{1-p^{-1}} +  O(\mc{E}_2(r)) & \xi\leq r<\xi^2,
        \end{cases}
         \\
    \end{equation}
    
    \begin{equation}\label{lambda_n/n log n}
        \sum_{\substack{n<\xi^2\\ r|n}} \frac{\lambda_n}{n}\log n=\frac{\mu(r)}{r}\cdot 
        \begin{cases}
            \displaystyle
            -\prod_{p|r} \frac{1}{1-p^{-1}} +  O(\mc{E}_1(r)) & 1\leq r\leq \xi,\\
            \displaystyle
            \frac{2}{\log \xi}\Bigg(\log\frac{r}{\xi}+\gamma_0+\sum_{p|r}\frac{\log p}{p-1}\Bigg)\prod_{p|r} \frac{1}{1-p^{-1}} +  O(\mc{E}_2(r)) & \xi\leq r<\xi^2.
        \end{cases}
    \end{equation}

\end{lemma}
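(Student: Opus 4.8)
The plan is to deduce Lemma~\ref{lemma lambda_n} from Lemma~\ref{lemma mu(n)} by expressing the sums $\sum_{n<\xi^2,\, r\mid n}\lambda_n/n$ and $\sum_{n<\xi^2,\, r\mid n}(\lambda_n/n)\log n$ in terms of the quantities $M_\ell(r,x)$. First I would write $n=rm$ with $(m,r)=?$ handled via M\"obius: since $\lambda_n$ is supported on squarefree $n$ (it is $\mu(n)$ times a weight), the condition $r\mid n$ with $n$ squarefree forces $r$ squarefree, $n=rm$, and $(m,r)=1$, and $\mu(n)=\mu(r)\mu(m)$. So each sum becomes $\frac{\mu(r)}{r}$ times a sum over $m<\xi^2/r$ with $(m,r)=1$ of $\frac{\mu(m)}{m}$ against the weight coming from \eqref{def: lambda_n}. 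The two regimes $1\le r\le\xi$ and $\xi\le r<\xi^2$ arise because when $r>\xi$ only the ``outer'' piece $\xi\le n\le\xi^2$, i.e. weight $\frac{\log(\xi^2/n)}{\log\xi}=\frac{\log(\xi^2/(rm))}{\log\xi}$, contributes, whereas when $r\le\xi$ one has to split the $m$-sum at $m=\xi/r$ and treat the flat piece ($n\le\xi$, weight $1$) and the tapered piece separately.

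The key computation is then to recognize, after writing $\log(\xi^2/n)=\log(\xi^2/r)-\log m$ and (in the $r\le\xi$ regime) $\log(\xi/n)=\log(\xi/r)-\log m$ on the tapered range, that everything reduces to linear combinations of $M_0$-type sums $\sum_{(m,r)=1}\mu(m)/m$, $M_1(r,x)=\sum_{(m,r)=1}(\mu(m)/m)\log(x/m)$ and $M_2(r,x)$ at the two scales $x=\xi/r$ and $x=\xi^2/r$. (Note $M_0(r,x)=\sum_{m<x,(m,r)=1}\mu(m)/m = O(e^{-C\sqrt{\log x}}\prod_{p\mid r}(1-p^{-3/4})^{-1})$, which is the $\ell=0$ analogue of Lemma~\ref{lemma mu(n)} and follows from the same contour argument with no pole at $s=0$; I would either cite this or note it is obtained by the same proof.) Substituting the asymptotic expansions from Lemma~\ref{lemma mu(n)} — in particular the $\ell=1$ case gives a main term $\prod_{p\mid r}(1-p^{-1})^{-1}$ and the $\ell=2$ case gives $2(\log x-\gamma_0-\sum_{p\mid r}\tfrac{\log p}{p-1})\prod_{p\mid r}(1-p^{-1})^{-1}$ — and collecting terms should produce exactly the stated main terms after the logarithms $\log(\xi/r)$ or $\log(\xi^2/r)$ combine with the $-\gamma_0-\sum_{p\mid r}\tfrac{\log p}{p-1}$ from $M_2$. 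For instance, in \eqref{lambda_n/n log n} for $\xi\le r<\xi^2$ the factor $\frac{2}{\log\xi}(\log\tfrac r\xi+\gamma_0+\sum_{p\mid r}\tfrac{\log p}{p-1})$ is visibly $-\frac{2}{\log\xi}$ times $(\log(\xi^2/r)\cdot\tfrac12 - \ldots)$ reorganized — one must track the $\log m$ versus $\log(x/m)$ bookkeeping carefully so the signs come out right.

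The error terms require a little care but are routine: each application of Lemma~\ref{lemma mu(n)} at scale $x=\xi^\ell/r$ contributes $O\big(e^{-C\sqrt{\log(\xi^\ell/r)}}\prod_{p\mid r}(1-p^{-3/4})^{-1}\big)=O(\mc{E}_\ell(r))$, and since we only ever divide by $\log\xi$ or multiply by bounded combinatorial factors, and $\mc{E}_1(r)\ge\mc{E}_2(r)$ up to constants when both are defined, the errors aggregate to the claimed $O(\mc{E}_1(r))$ or $O(\mc{E}_2(r))$. One subtlety: when $r$ is close to $\xi$ (resp. close to $\xi^2$) the quantity $\log(\xi/r)$ (resp.\ $\log(\xi^2/r)$) is small, so the ``main term'' is not actually dominant and the estimate is really just an $O$-bound there; but this is automatically consistent with the stated formulas since the main term is then absorbed — indeed the statement at the boundary $r=\xi$ matches from both sides up to $O(\mc{E}(r))$, which I would remark on. The main obstacle, such as it is, is purely organizational: keeping the four or five instances of $M_\ell$ at two scales straight, getting every sign and every factor of $\frac{1}{\log\xi}$ correct, and verifying that the $\gamma_0$ and $\sum_{p\mid r}\frac{\log p}{p-1}$ terms land exactly where the statement puts them rather than being off by a constant. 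There is no genuine analytic difficulty beyond what is already packaged in Lemma~\ref{lemma mu(n)}.
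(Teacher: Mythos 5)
Your overall route is the paper's: substitute $n=rm$, $\mu(n)=\mu(r)\mu(m)$, and reduce everything to the quantities $M_\ell(r,\xi/r)$, $M_\ell(r,\xi^2/r)$ of Lemma~\ref{lemma mu(n)}. But as written your error bookkeeping has a genuine gap in the range $1\le r\le\xi$: splitting into the flat piece ($n\le\xi$, weight $1$) and the tapered piece and then bounding the resulting $M_0$-type sums by an $\ell=0$ analogue of Lemma~\ref{lemma mu(n)} cannot give the stated bound. The flat piece enters with prefactor $\mu(r)/r$, not $\frac{\mu(r)}{r\log\xi}$, so bounding it separately by $O(\mc{E}_1(r))$ yields an error a full factor $\log\xi$ larger than the right-hand side of \eqref{lambda_n/n}; and this loss is real, not an artifact of adjusting the constant $C$. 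For instance, if $r$ is a prime with $\xi/2<r\le\xi$, the flat piece equals $\mu(r)/r$ exactly, which already exceeds the claimed bound $\frac{1}{r\log\xi}O(\mc{E}_1(r))$ by a factor of order $\log\xi$; the lemma is only true because of exact cancellation between the flat and tapered pieces. The same issue appears in \eqref{lambda_n/n log n}. Moreover, the weakened version you would obtain is not enough later: in Lemma~\ref{lemma lambda_n gcd} the error terms carry the crucial $\frac{1}{(\log\xi)^2}$ factor, without which the error sum over $r<\xi$ is $O(1)$ and swamps the main term $\frac{1}{\log\xi}$ in \eqref{first double sum lambda_n}.

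The missing observation is that the $M_0$ sums cancel identically, so no $\ell=0$ input is needed at all. Concretely, the coefficient of $M_0(r,\xi/r)$ coming from the tapered piece (after writing $\log\frac{\xi^2}{rm}=\log\xi+\log\frac{\xi/r}{m}$) is exactly $-\frac{\mu(r)}{r}$, cancelling the flat piece; equivalently, as in the paper, write the weight uniformly as $\lambda_n=\frac{\mu(n)}{\log\xi}\big(\log\tfrac{\xi^2}{n}\,\mathbf{1}_{n<\xi^2}-\log\tfrac{\xi}{n}\,\mathbf{1}_{n<\xi}\big)$, which gives the exact identities
\[
\sum_{\substack{n<\xi^2\\ r|n}}\frac{\lambda_n}{n}=\frac{\mu(r)}{r\log\xi}\left(M_1(r,\xi^2/r)-M_1(r,\xi/r)\right)
\]
and an analogous one for the $\log n$ sum involving only $M_1$ and $M_2$ at the two scales; the main terms of the two $M_1$'s then cancel, producing the $O(\mc{E}_1(r))$ error with the correct $\frac{1}{\log\xi}$ prefactor. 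You must collect these coefficients \emph{before} invoking any bounds. A secondary point: your claim that the $\ell=0$ estimate ``follows from the same contour argument'' is not quite right either, since the Perron kernel $1/s$ makes the shifted integral non-absolutely convergent (the integrand decays only like $\log(|t|+3)/(|t|+3)$), so one would need a truncated Perron formula; but once the cancellation above is used, this issue disappears because $M_0$ never occurs.
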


\begin{proof}
    It follows from the definition \eqref{def: lambda_n} that
    \begin{align*}
        \sum_{\substack{n<\xi^2\\ r|n}} \frac{\lambda_n}{n}=&\frac{1}{\log \xi}\sum_{\substack{n<\xi^2\\r|n}}\frac{\mu(n)}{n}\log \frac{\xi^2}{n}-\frac{1}{\log \xi}\sum_{\substack{n<\xi \\r|n}}\frac{\mu(n)}{n}\log \frac{\xi}{n}\\
        =&\frac{\mu(r)}{r\log \xi}\Bigg(\sum_{\substack{m<\xi^2/r \\(r,m)=1}}\frac{\mu(m)}{m}\log \frac{\xi^2}{rm}-\sum_{\substack{m<\xi/r \\(r,m)=1}}\frac{\mu(m)}{m}\log \frac{\xi}{rm}\Bigg)\\
        =& \frac{\mu(r)}{r\log \xi}\left(M_1(r,\xi^2/r)-M_1(r,\xi/r)\right).
    \end{align*}
    Here $M_1(r,\xi/r)=0$ if $\xi\leq r$. Applying Lemma~\ref{lemma mu(n)} with $\ell=1$ yields \eqref{lambda_n/n}. To see \eqref{lambda_n/n log n}, write
    \[
    \sum_{\substack{n<\xi^2\\ r|n}} \frac{\lambda_n}{n} \log n = \log r  \sum_{\substack{n<\xi^2\\ r|n}} \frac{\lambda_n}{n}+ \sum_{\substack{n<\xi^2\\ r|n}} \frac{\lambda_n}{n} \log \frac{n}{r}.
    \]
    The first term is handled by \eqref{lambda_n/n}. Furthermore, we find by comparing coefficients of $\mu(n)$ that
    \begin{align*}
        \sum_{\substack{n<\xi^2\\ r|n}} \frac{\lambda_n}{n} \log \frac{n}{r} = \frac{\mu(r)}{r\log \xi}\left(\log \frac{\xi^2}{r} M_1(r,\xi^2/r)-\log \frac{\xi}{r}  M_1(r,\xi/r)+M_2(r,\xi/r)-M_2(r,\xi^2/r)\right).
    \end{align*}
    Another application of Lemma~\ref{lemma mu(n)} finishes the proof.
\end{proof}

In the following lemma we provide asymptotic formulas for several double sums involving $\lambda_n$ and the greatest common divisor. In particular, they refine the corresponding estimates that appear in the proof of \cite[Lemma 13]{Sel46}.

\begin{lemma}\label{lemma lambda_n gcd}
    \begin{equation}\label{first double sum lambda_n}                    
        \sum_{n_i<\xi^2}\frac{\lambda_{n_1}\lambda_{n_2}}{n_1n_2}\gcd (n_1,n_2) = \frac{1}{\log \xi}+O\left(\frac{1}{(\log\xi)^2}\right).
    \end{equation}

    \begin{equation}\label{second double sum lambda_n}               
        \sum_{n_i<\xi^2}\frac{\lambda_{n_1}\lambda_{n_2}}{n_1n_2}\gcd(n_1,n_2)\log n_1 = 1+O\left(\frac{\log\log\xi}{\log\xi}\right)
    \end{equation}

    \begin{equation}\label{third double sum lambda_n}       
        \sum_{n_i<\xi^2}\frac{\lambda_{n_1}\lambda_{n_2}}{n_1n_2}\gcd(n_1,n_2)\log(\gcd(n_1,n_2))= \frac{3}{2}+O\left(\frac{\log\log \xi}{\log\xi}\right).
    \end{equation}
\end{lemma}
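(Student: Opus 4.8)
The plan is to \emph{diagonalise} each of the three double sums: one writes the arithmetic function of $\gcd(n_1,n_2)$ occurring in it as a Dirichlet convolution, which collapses the double sum into a single sum over $r\mid n$ of exactly the shape evaluated in Lemma~\ref{lemma lambda_n}. Since $\lambda_n=0$ unless $n$ is squarefree, every $n_i$ that appears is squarefree, and on such arguments I would use the identities
\[
\gcd(n_1,n_2)=\sum_{r\mid(n_1,n_2)}\phi(r),\qquad
\gcd(n_1,n_2)\log\gcd(n_1,n_2)=\sum_{r\mid(n_1,n_2)}\phi(r)\sum_{p\mid r}\frac{p\log p}{p-1}.
\]
The first is classical; the second can be checked by multiplicativity on prime powers, or by differentiating $\sum_{d\mid m}(\mu\ast\mathrm{id}^{1+s})(d)=m^{1+s}$ at $s=0$ and noting that $\tfrac{\partial}{\partial s}(\mu\ast\mathrm{id}^{1+s})(d)\big|_{s=0}=\phi(d)\sum_{p\mid d}\tfrac{p\log p}{p-1}$ for squarefree $d$. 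Substituting these and interchanging the order of summation recasts \eqref{first double sum lambda_n}, \eqref{second double sum lambda_n} and \eqref{third double sum lambda_n} as
\[
\sum_{r}\phi(r)\biggl(\sum_{\substack{n<\xi^2\\ r\mid n}}\frac{\lambda_n}{n}\biggr)^{\!2},\qquad
\sum_{r}\phi(r)\biggl(\sum_{\substack{n<\xi^2\\ r\mid n}}\frac{\lambda_n\log n}{n}\biggr)\biggl(\sum_{\substack{n<\xi^2\\ r\mid n}}\frac{\lambda_n}{n}\biggr),\qquad
\sum_{r}\phi(r)\biggl(\sum_{p\mid r}\frac{p\log p}{p-1}\biggr)\biggl(\sum_{\substack{n<\xi^2\\ r\mid n}}\frac{\lambda_n}{n}\biggr)^{\!2},
\]
where $r$ runs over squarefree integers $<\xi^2$ (for non-squarefree $r$ the inner sums vanish).

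Next I would split the $r$-sum at $r=\xi$ and insert Lemma~\ref{lemma lambda_n}, which behaves very differently on the two ranges. On $1\le r\le\xi$ the plain inner sum $\sum_{r\mid n}\lambda_n/n$ is only $O(\mc E_1(r)/(r\log\xi))$, and it occurs as a factor in all three products; together with $\phi(r)\le r$ and the crude bound $\sum_{p\mid r}\tfrac{p\log p}{p-1}\ll\log\xi$ this shows the $r\le\xi$ range contributes only quantities of the form $O\bigl((\log\xi)^{-2}\sum_{r\le\xi}\mc E_1(r)^2/r\bigr)$ for \eqref{first double sum lambda_n} and $O\bigl((\log\xi)^{-1}\sum_{r\le\xi}\mc E_1(r)^{j}/r\bigr)$, $j\in\{1,2\}$, for \eqref{second double sum lambda_n} and \eqref{third double sum lambda_n}. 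I would then bound each tail by partial summation against the weight $e^{-C\sqrt{\log(\xi/r)}}$: since $\sum_{r\le t}\tfrac1r\prod_{p\mid r}(1-p^{-3/4})^{-j}\asymp\log t$ for $j\in\{1,2\}$, the leading $\log\xi$ from the endpoint cancels against the leading $\log\xi$ from the resulting integral, leaving $O(1)$ uniformly in $\xi$, and so the range $r\le\xi$ contributes only to the error terms.

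The main terms come from $\xi\le r<\xi^2$, where Lemma~\ref{lemma lambda_n} gives $\sum_{r\mid n}\lambda_n/n=\tfrac{\mu(r)}{r\log\xi}\bigl(\prod_{p\mid r}(1-p^{-1})^{-1}+O(\mc E_2(r))\bigr)$ together with the companion formula \eqref{lambda_n/n log n}. Multiplying out and discarding the cross and error-squared terms into the error (they are handled exactly as above), then using $\phi(r)\prod_{p\mid r}(1-p^{-1})^{-2}=r^2/\phi(r)$ for squarefree $r$, the main contributions reduce to
\[
\frac{1}{(\log\xi)^2}\sum_{\xi\le r<\xi^2}\frac{\mu^2(r)}{\phi(r)},\quad
\frac{2}{(\log\xi)^2}\sum_{\xi\le r<\xi^2}\frac{\mu^2(r)}{\phi(r)}\Bigl(\log\tfrac r\xi+\gamma_0+\sum_{p\mid r}\tfrac{\log p}{p-1}\Bigr),\quad
\frac{1}{(\log\xi)^2}\sum_{\xi\le r<\xi^2}\frac{\mu^2(r)}{\phi(r)}\sum_{p\mid r}\frac{p\log p}{p-1}.
\]
Using the classical asymptotic $\sum_{n\le y}\mu^2(n)/\phi(n)=\log y+O(1)$, partial summation for the $\log(r/\xi)$ term, and interchange with $\sum_p$ followed by Mertens' theorem for the remaining terms, I expect the three inner sums to be $\log\xi$, $\tfrac12(\log\xi)^2$ and $\tfrac32(\log\xi)^2$ to leading order, which gives precisely the main terms $\tfrac1{\log\xi}$, $1$ and $\tfrac32$; the secondary terms, of size $O(1)$, $O(\log\xi)$, or $O(\log\xi\log\log\xi)$ after crude estimation, land inside the stated error terms.

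The hard part will be the uniform control of the error terms rather than the extraction of the main terms. On the one hand, one must establish that the tails $\sum_r\mc E_\ell(r)^j/r$ are $O(1)$ independently of $\xi$; the partial-summation cancellation sketched above is the crux, and it is somewhat delicate because $e^{-C\sqrt{\log(\xi^\ell/r)}}$ is $\asymp1$ near $r=\xi^\ell$. On the other hand, one must track the lower-order terms in the Mertens-type estimates --- and in the cross terms against $\mc E_2$ --- closely enough to reach the stated $O(\log\log\xi/\log\xi)$ in \eqref{second double sum lambda_n}--\eqref{third double sum lambda_n} and $O((\log\xi)^{-2})$ in \eqref{first double sum lambda_n}. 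Verifying the convolution identity for $\gcd\log\gcd$ is, by comparison, a routine preliminary.
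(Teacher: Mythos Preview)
Your proposal is correct and follows essentially the same route as the paper: the same diagonalisation via $\gcd=\sum_{r}\varphi(r)$, the same splitting at $r=\xi$ with Lemma~\ref{lemma lambda_n}, and the same reduction to $\sum_{\xi\le r<\xi^2}\mu^2(r)/\varphi(r)$ and its weighted variants. Your convolution weight $\varphi(r)\sum_{p\mid r}\tfrac{p\log p}{p-1}$ for $n\log n$ is exactly the paper's $\varphi'(r)=\varphi(r)\bigl(\log r+\sum_{p\mid r}\tfrac{\log p}{p-1}\bigr)$ on squarefree $r$, and your partial-summation treatment of the $\mc E_\ell$-tails is a harmless reorganisation of the paper's double-sum bound via $\prod_{p\mid r}(1-p^{-3/4})^{-j}\ll\sum_{d\mid r}d^{-1/2}$.
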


\begin{proof}
    To derive the first assertion, we start by expressing the sum as
    \[
    \sum_{n_i<\xi^2}\frac{\lambda_{n_1}\lambda_{n_2}}{n_1n_2}\gcd (n_1,n_2)=\sum_{n_i<\xi^2}\frac{\lambda_{n_1}\lambda_{n_2}}{n_1n_2}\sum_{r|\gcd(n_1,n_2)}\varphi(r)=\sum_{r<\xi^2}\varphi(r) \bigg(\sum_{\substack{n<\xi^2\\r|n}}\frac{\lambda_n}{n}\bigg)^2
    \]
    where $\varphi(r)$ denotes Euler's totient function. Writing $\varphi(r)=r\prod_{p|r}(1-p^{-1})$ and applying Lemma~\ref{lemma lambda_n},
    we obtain
    \begin{multline*}
        \sum_{n_i<\xi^2}\frac{\lambda_{n_1}\lambda_{n_2}}{n_1n_2}\gcd (n_1,n_2)= \frac{1}{(\log \xi)^2}\Bigg\{\sum_{\xi\leq r<\xi^2} \frac{\mu(r)^2}{\varphi(r)}+O\Bigg(\sum_{r<\xi} \frac{1}{r}e^{-2C\sqrt{\log (\xi/r)}}\sum_{d|r}\frac{1}{\sqrt{d}}\Bigg)\\
        +O\Bigg(\sum_{\xi\leq r<\xi^2}\frac{1}{r}e^{-C\sqrt{\log (\xi^2/r)}}\sum_{d|r} \frac{1}{\sqrt{d}}\Bigg)\Bigg\}
    \end{multline*}
    where for the big-$O$ error terms we used the bound
    \[
    \prod_{p|r}\frac{1-p^{-1}}{(1-p^{-3/4})^2}\ll \prod_{p|r} \left(1+\frac{1}{\sqrt{p}}\right)=\sum_{d|r} \frac{1}{\sqrt{d}}.
    \]
    The sum in the first error term can be rewritten as
    \[
    \sum_{d<\xi}\frac{1}{d\sqrt{d}}\sum_{d'\leq \xi/d}\frac{1}{d'}e^{-2C\sqrt{\log(\xi/(dd'))}}.
    \]
    By integral comparison the inner sum over $d'$ is $O(1)$ uniformly for all $d$, so the entire sum is $O(1)$ as well. A similar estimate holds for the second error term. As for the main term, recall the asymptotic formula (see \cite[Lemma 2.3]{Sit85})
    \begin{equation}\label{totient sum}
        \sum_{r\leq x} \frac{\mu(r)^2}{\varphi(r)}=\log x+\gamma_0+\sum_p \frac{\log p}{p(p-1)}+O\left(\frac{1}{\sqrt{x}}\right),
    \end{equation}
    from which \eqref{first double sum lambda_n} follows immediately.

    Next, similarly to the above argument, we find by using Lemma~\ref{lemma lambda_n} that 
    \begin{align*}
        \sum_{n_i<\xi^2}\frac{\lambda_{n_1}\lambda_{n_2}}{n_1n_2}&\gcd(n_1,n_2)\log n_1\\
        =&\sum_{r<\xi^2}\varphi(r) \bigg(\sum_{\substack{n<\xi^2\\r|n}}\frac{\lambda_n}{n}\bigg)\bigg(\sum_{\substack{n<\xi^2\\r|n}}\frac{\lambda_n}{n}\log n\bigg)\\
        =&\frac{2}{(\log \xi)^2}\sum_{\xi\leq r<\xi^2}\frac{\mu(r)^2}{\varphi(r)}\Bigg(\log\frac{r}{\xi}+\gamma_0+\sum_{p|r}\frac{\log p}{p-1}\Bigg)+O\left(\frac{1}{\log\xi}\right)\\
        =&\frac{2}{(\log \xi)^2}\sum_{\xi\leq r<\xi^2}\frac{\mu(r)^2}{\varphi(r)}\left(\log\frac{r}{\xi}+O(\log\log r)\right)+O\left(\frac{1}{\log\xi}\right).
    \end{align*}
    By partial summation and \eqref{totient sum},
    \begin{align*}
        \sum_{\xi\leq r<\xi^2}\frac{\mu(r)^2}{\varphi(r)}\log r=&\log \xi^2 \sum_{r<\xi^2}\frac{\mu(r)^2}{\varphi(r)}-\log \xi \sum_{r< \xi}\frac{\mu(r)^2}{\varphi(r)}-\sum_{\xi\leq r<\xi^2} \Bigg(\sum_{\xi\leq n\leq r}\frac{\mu(n)^2}{\varphi(n)}\Bigg)\log \frac{r+1}{r}\\
        =&4(\log \xi)^2-(\log \xi)^2-\sum_{\xi\leq r<\xi^2} (\log r+O(1))\left(\frac{1}{r}+O\left(\frac{1}{r^2}\right)\right)\\
        =& \frac{3}{2}(\log \xi)^2+O(\log \xi).
    \end{align*}
    The other terms can be treated directly by using \eqref{totient sum}, and this gives \eqref{second double sum lambda_n}.
    
    It remains to show \eqref{third double sum lambda_n}. Put
    \[
    \varphi'(r):=r\sum_{d|r}\frac{\mu(d)}{d}\log\frac{r}{d}.
    \]
    We claim that
    \begin{equation}\label{modified totient function}
        \varphi'(r)=\varphi(r)\bigg(\log r+\sum_{p|r}\frac{\log p}{p-1}\bigg).
    \end{equation}
    Note that $r\sum_{d|r}\mu(d)/d=r\prod_{p|r}(1-1/p)=\varphi(r)$, so it suffices to establish the following identity:
    \[
    -r\sum_{d|r}\frac{\mu(d)}{d}\log d=\varphi(r) \sum_{p|r}\frac{\log p}{p-1}.
    \]
    Plainly we may assume that $r$ is squarefree. The left-hand side can be rewritten as
    \[
    -r\sum_{d|r}\frac{\mu(d)}{d}\sum_{d'|d}\Lambda(d')=-r\sum_{d'|r}\Lambda(d')\sum_{\substack{d'|d\\d|r}}\frac{\mu(d)}{d}=-r\sum_{d'|r}\Lambda(d')\sum_{k|\frac{r}{d'}}\frac{\mu(kd')}{kd'}.
    \]
    In the last sum we have $\mu(kd')=\mu(k)\mu(d')=-\mu(k)$ since $\gcd(k,d')=1$ by the assumption that $r$ is squarefree and $d'$ contributes to the sum only when it is a prime. Thus, the above equals
    \[
    \sum_{d'|r}\Lambda(d')\sum_{k|\frac{r}{d'}}\mu(k)\frac{r/d'}{k}=\sum_{d'|r}\Lambda(d')\varphi(r/d')=\varphi(r) \sum_{p|r}\frac{\log p}{p-1},
    \]
    where the last equality can be found in \cite[Lemma 3.2]{FM13}. This proves \eqref{modified totient function}. We also have
    \[
    \sum_{r|n}\varphi'(r)=\sum_{r|n} r\sum_{d|r}\frac{\mu(d)}{d}\log \frac{r}{d}=\sum_{d|n}\mu(d)\sum_{k|\frac{n}{d}}k\log k,
    \]
    which implies that
    \[
    n\log n=\sum_{r|n}\varphi'(r)
    \]
    via M\"{o}bius inversion. It follows, upon setting $n=\gcd(n_1,n_2)$, that
    \begin{align*}
        \sum_{n_i<\xi^2}\frac{\lambda_{n_1}\lambda_{n_2}}{n_1n_2}&\gcd(n_1,n_2)\log(\gcd(n_1,n_2))\\
        =&\sum_{r<\xi^2}\varphi'(r)\bigg(\sum_{\substack{n<\xi^2\\ r|n}} \frac{\lambda_n}{n}\bigg)^2\\
        =&\frac{1}{(\log\xi)^2}\Bigg\{\sum_{\xi\leq r<\xi^2}\frac{\mu(r)^2}{\varphi(r)}\bigg(\log r+\sum_{p|r}\frac{\log p}{p-1}\bigg)+O(1)\Bigg\}\\
        =&\frac{3}{2}+O\left(\frac{\log\log \xi}{\log \xi}\right),
    \end{align*}
    as desired.
\end{proof}

\subsection{Mollifying $L(s,\chi)$}
Now define the Dirichlet polynomial
\begin{equation}\label{def: psi}
    \psi(s,\chi):=\sum_{n<\xi^2}\frac{\lambda_n\chi(n)}{n^s},
\end{equation}
which may be viewed as a mollifier approximating $1/L(s,\chi)$. The next lemma roughly says that $L\cdot \psi$ is close to 1 on average to the right of the half-line.

\begin{lemma}[c.f. {\cite[Lemma 13]{Sel46}}]\label{lemma L*psi}
    Fix $0<\eps\leq 1/4$, $0<\kappa<\eps/2$, and let $\xi=q^\kappa$. Then for all $q\geq q_0(\kappa)$, $|t|\leq 2q^{1/4-\eps}$ and $\sigma\geq \frac{1}{2}+\frac{1}{8\kappa\log q}$,
    \[
    \frac{1}{q-2}\sum_{\chi\neq \chi_0}\left(|L(\sigma+it,\chi)\psi(\sigma+it,\chi)|^2-1\right)< 6.20\xi^{1-2\sigma}
    \]
    where the summation is taken over all non-principal characters modulo $q$.
\end{lemma}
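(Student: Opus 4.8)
The plan is to follow Selberg's proof of \cite[Lemma 13]{Sel46}, keeping every constant explicit. First write $L(s,\chi)\psi(s,\chi)$ as a single Dirichlet series: since $\chi$ is completely multiplicative,
\[
L(s,\chi)\psi(s,\chi)=\sum_{k\ge 1}\frac{a_k\chi(k)}{k^s},\qquad a_k=a_k(\xi):=\sum_{\substack{d\mid k\\ d<\xi^2}}\lambda_d,
\]
and the structural facts one needs are $a_1=1$ and $a_k=0$ for $2\le k\le \xi$, which hold because $\lambda_d=\mu(d)$ for $d\le\xi$ and $\sum_{d\mid k}\mu(d)=0$ for $k>1$. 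For $\sigma>1/2$ this series converges, and the P\'olya--Vinogradov bound $\bigl|\sum_{k\le N}a_k\chi(k)k^{-s}\bigr|\ll \xi^2\sqrt q\log q$ (uniformly in $\chi\ne\chi_0$, $N$, and $s$ on our range) lets one pass freely between $L\psi$ and its partial sums when forming the mean square.

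Next I would replace the average over non-principal $\chi$ by the average over all $\chi\bmod q$, the principal character costing only $|L(s,\chi_0)\psi(s,\chi_0)|^2-1\ll (q(|t|+1))^{O(1)}$ (convexity for $\zeta$, the trivial bound for the short polynomial $\psi$), hence a negligible contribution after dividing by $q-2$. Then I would truncate $L(s,\chi)$ by the approximate functional equation with balanced cutoffs $X\asymp\sqrt{q(|t|+1)}$. The hypotheses $|t|\le 2q^{1/4-\eps}$ and $\xi=q^\kappa$ with $\kappa<\eps/2$ are used precisely here: they force $X\xi^2<q$, so that after expanding $|(\text{main part})\cdot\psi|^2$ and invoking orthogonality $\sum_{\chi\bmod q}\chi(m)\overline{\chi(n)}=(q-1)\,[\,m\equiv n\ (q),\ (mn,q)=1\,]$, the congruence $m\equiv n\pmod q$ with $m,n<q$ collapses to $m=n$ and \emph{there are no off-diagonal terms}. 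The dual sum of the functional equation carries $|\varepsilon(s,\chi)|^2\asymp (q(|t|+1))^{1-2\sigma}\le q^{1-2\sigma}$, which already beats $\xi^{1-2\sigma}$ by a power of $q$; bounding its character average (crude bounds suffice once one uses the cancellation in the $\lambda_d$ recorded in Lemma~\ref{lemma lambda_n} to avoid spurious logarithms) it contributes $O(\xi^{1-2\sigma})$ with a small constant. The cross term is then controlled by Cauchy--Schwarz, and the remainder in the approximate functional equation is a harmless error.

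What remains is the diagonal main term $\sum_{(k,q)=1}|a_k|^2k^{-2\sigma}$; the truncations of the divisor sum defining $a_k$ and of the outer range $k\le X$ only affect tails $\ll X^{1-2\sigma}(\log X)^{O(1)}$, which is $o(\xi^{1-2\sigma})$ since $X\ge\sqrt q=\xi^{1/(2\kappa)}$ and $\kappa<1/2$. Summing the resulting geometric series in the two divisor variables,
\[
\sum_{(k,q)=1}\frac{|a_k|^2}{k^{2\sigma}}=\zeta(2\sigma)\,(1-q^{-2\sigma})\sum_{n_1,n_2<\xi^2}\frac{\lambda_{n_1}\lambda_{n_2}\,\gcd(n_1,n_2)^{2\sigma}}{(n_1n_2)^{2\sigma}},
\]
using $\lcm(n_1,n_2)=n_1n_2/\gcd(n_1,n_2)$, and the goal is to show the right-hand side equals $1+O(\xi^{1-2\sigma})$ with the constant $6.20$ absorbing every accumulated piece. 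Factoring $\gcd^{2\sigma}/(n_1n_2)^{2\sigma}=\tfrac{\gcd}{n_1n_2}\bigl(\tfrac{\gcd}{n_1n_2}\bigr)^{2\sigma-1}$, expanding $x^{2\sigma-1}$, and substituting the three evaluations of Lemma~\ref{lemma lambda_n gcd} — $\sum\tfrac{\lambda_{n_1}\lambda_{n_2}}{n_1n_2}\gcd=\tfrac1{\log\xi}+O(\tfrac1{\log^2\xi})$, $\sum\tfrac{\lambda_{n_1}\lambda_{n_2}}{n_1n_2}\gcd\log n_1=1+O(\tfrac{\log\log\xi}{\log\xi})$, $\sum\tfrac{\lambda_{n_1}\lambda_{n_2}}{n_1n_2}\gcd\log\gcd=\tfrac32+O(\tfrac{\log\log\xi}{\log\xi})$ — together with the Laurent expansion $\zeta(2\sigma)=\tfrac1{2\sigma-1}+\gamma_0+O(\sigma-\tfrac12)$, the $1/\log\xi$-sized contributions assemble into the leading $1$ while the rest is dominated by $\xi^{1-2\sigma}$ (equivalently one may route the estimate through the Barban--Vehov mean value $\sum_{k\le x}|a_k|^2\sim x/\log\xi$ and partial summation).

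I expect this last step to be the main obstacle, for two reasons. The Taylor expansion of $x^{2\sigma-1}$ is not uniform in $\sigma$, since $(2\sigma-1)\log n_i$ can be of size $\asymp 1$ on our range $\sigma\ge\tfrac12+\tfrac1{8\kappa\log q}$; so one must either keep $(2\sigma-1)\log\xi=O(1)$ and dispatch larger $\sigma$ by a separate elementary bound (using $|a_k|\le d(k)$ and geometric decay), or carry out the partial-summation argument with some care. And because the claim is a genuine inequality with an absolute constant rather than an asymptotic, every error term produced along the way — the principal character, the dual sum, the cross term, the remainder, the truncation tails — must be verified to be $\le c\,\xi^{1-2\sigma}$ uniformly (not merely $o(\xi^{1-2\sigma})$ for each fixed $\kappa$), and the constants must be shepherded so that they sum to at most $6.20$.
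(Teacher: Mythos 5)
Your outline matches the paper only at the skeleton level (expand $|L\psi|^2$, use orthogonality, reduce to the $\gcd$-sums of Lemma~\ref{lemma lambda_n gcd}), but the two steps that actually carry the lemma are done differently in your proposal and both have genuine gaps. First, the approximate-functional-equation route: your claim that the dual sum ``carries $|\varepsilon(s,\chi)|^2\asymp (q(|t|+1))^{1-2\sigma}$, which already beats $\xi^{1-2\sigma}$ by a power of $q$'' is false precisely in the range where the lemma is used, namely $\sigma-\tfrac12\asymp 1/\log q$. There $q^{1-2\sigma}$ and $\xi^{1-2\sigma}=q^{\kappa(1-2\sigma)}$ are both bounded constants differing only by a factor $e^{-O(1)}$, so the dual part, and the $\chi$-dependent cross terms involving $\tau(\chi)$ (which do not vanish under orthogonality and are only reduced, not killed, by Cauchy--Schwarz), contribute at the same order as the quantity being bounded. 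They would have to be evaluated explicitly and added to the constant, and nothing in the proposal shows the total stays below $6.20$. The paper sidesteps this entirely by importing Selberg's explicit mean value theorems (\cite[Theorem 1]{Sel46} for $\tfrac12+\tfrac{1}{8\kappa\log q}\le\sigma\le 1-\tfrac1{\log q}$ and \cite[Lemma 7]{Sel46} for $\sigma>1-\tfrac1{\log q}$), whose error terms, after summing over $n_1,n_2<\xi^2$, save a genuine power of $q$ against the main term $q\,\xi^{1-2\sigma}$.

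Second, and more seriously, your primary route to the diagonal — expanding $(\gcd/(n_1n_2))^{2\sigma-1}$ to first order and feeding in the three evaluations of Lemma~\ref{lemma lambda_n gcd} — cannot produce the stated bound. When $u:=(2\sigma-1)\log\xi\asymp 1$ (the critical range), the $j$-th order term of that expansion is of size $\asymp u^{j-1}/\log\xi$ after multiplying by $\zeta(2\sigma)\asymp(2\sigma-1)^{-1}$, i.e.\ every order contributes at the same scale as the terms you keep; truncating at first order is not a matter of ``care,'' it simply loses the answer, and Lemma~\ref{lemma lambda_n gcd} supplies only the zeroth and first log-moments. The constant $6.20$ in the paper arises from a different mechanism: rewrite the main term as $(q-1)\sum_m m^{-2\sigma}\big(\sum_{n\mid m,\,n<\xi^2}\lambda_n\big)^2$, note the inner coefficient is $1$ at $m=1$ and vanishes for $1<m\le\xi$, majorize $m^{-2\sigma}\le \xi^{1-2\sigma}e^{2\kappa\alpha}m^{-1-2\alpha/\log q}$ with $\alpha=\tfrac1{8\kappa}$ (this is exactly where $\sigma\ge\tfrac12+\tfrac1{8\kappa\log q}$ enters) and $m^{-2\alpha/\log q}\le(e-1)\sum_{\ell\ge\lceil 2\alpha\log m/\log q\rceil}e^{-\ell}$, and then apply Lemma~\ref{lemma lambda_n gcd} only at the heights $\xi^{4\ell}$ to get $\sum_{m\le \xi^{4\ell}}m^{-1}(\sum_{n\mid m}\lambda_n)^2=4\ell-\tfrac12+o(1)$; summing gives $e^{1/4}(e-1)\big(\tfrac{4e}{(e-1)^2}-\tfrac{3}{2(e-1)}\big)=6.199\ldots$. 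Your parenthetical Barban--Vehov/partial-summation remark is the right idea in spirit, but it is not carried out, and a naive partial summation would additionally require uniform control of $\sum_{m\le y}a_m^2/m$ for $y$ just above $\xi$, which Lemma~\ref{lemma lambda_n gcd} does not directly give and which the exponential-weight trick above is designed to avoid.
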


\begin{proof}
    Substituting $\psi(s,\chi)$ by the defining Dirichlet polynomial \eqref{def: psi}, we have
    \begin{equation}\label{expression L*psi^2}
        \sum_{\chi\neq \chi_0} |L(\sigma+it,\chi)\psi(\sigma+it,\chi)|^2 = \sum_{n_i<\xi^2}\frac{\lambda_{n_1}\lambda_{n_2}}{n_1^{\sigma+it}n_2^{\sigma-it}}\sum_{\chi\neq \chi_0}|L(\sigma+it,\chi)|^2 \chi(n_1)\ov{\chi}(n_2).
    \end{equation}
    First suppose that $\frac{1}{2}+\frac{1}{8\kappa\log q}\leq \sigma\leq 1-\frac{1}{\log q}$. It follows from Selberg's argument in \cite[Lemma 13]{Sel46} and \cite[Theorem 1]{Sel46} that
    \begin{align*}
        \sum_{\chi\neq \chi_0} |L(\sigma+it,\chi)\psi(\sigma+it,\chi)|^2\leq & (q-1)\zeta(2\sigma)\sum_{n_i<\xi^2}\frac{\lambda_{n_1}\lambda_{n_2}}{(n_1n_2)^{2\sigma}}\gcd (n_1,n_2)^{2\sigma}\\
        &+O\bigg(\frac{1+t^2}{(1-\sigma)^2}\sum_{n_i<\xi^2}\left\{n_1^{1-\sigma}n_2^{-\sigma}q^{1-\sigma}+(n_1n_2)^{1-\sigma}q^{1-2\sigma}\right\}\bigg).
    \end{align*}
    The main term on the right-hand side is
    \begin{align*}
        (q-1)\sum_{k=1}^\infty \sum_{n_i<\xi^2}\frac{\lambda_{n_1}\lambda_{n_2}}{(k\cdot \lcm({n_1,n_2}))^{2\sigma}}=&(q-1)\sum_{m=1}^\infty m^{-2\sigma} \sum_{\substack{n_i<\xi^2\\ \lcm(n_1,n_2)|m}}\lambda_{n_1}\lambda_{n_2}\\
        =&(q-1)\sum_{m=1}^\infty m^{-2\sigma}\bigg(\sum_{\substack{n<\xi^2\\n|m}}\lambda_n\bigg)^2,
    \end{align*}
    while the error term is $O(q^{1-4(\frac{\eps}{2}-\kappa)}\xi^{1-2\sigma})=o(q\xi^{1-2\sigma})$ as $q\to \infty$. Since $\sum_{n|m}\mu(n)=1$ if $m=1$ and 0 if $m>1$,
    \begin{equation}\label{bound on L*psi 1}
        \sum_{\chi\neq \chi_0} |L(\sigma+it,\chi)\psi(\sigma+it,\chi)|^2\leq q-1+(q-1)\sum_{m>\xi} m^{-2\sigma}\bigg(\sum_{\substack{n<\xi^2\\n|m}}\lambda_n\bigg)^2+o(q\xi^{1-2\sigma}).
    \end{equation}
    Let $0<\alpha\leq (\sigma-1/2)\log q$. Note that for $m>\xi$,
    \begin{align*}
        m^{-2\sigma}=\frac{\xi^{1-2\sigma}}{\xi}\left(\frac{\xi}{m}\right)^{2\sigma}\leq \frac{\xi^{1-2\sigma}}{\xi}\left(\frac{\xi}{m}\right)^{1+\frac{2\alpha}{\log q}}= \xi^{1-2\sigma}\xi^{\frac{2\alpha}{\log q}} m^{-1-\frac{2\alpha}{\log q}}=  \xi^{1-2\sigma}e^{2\kappa\alpha}m^{-1-\frac{2\alpha}{\log q}}.
    \end{align*}
    Moreover, 
    \[
    m^{-\frac{2\alpha}{\log q}}\leq e^{-\lceil \frac{2\alpha\log m}{\log q}\rceil +1} = (e-1) \sum_{\ell\geq \lceil \frac{2\alpha\log m}{\log q}\rceil} e^{-\ell}.
    \]
    Thus 
    \begin{align}\label{bound on L*psi 2}
        \sum_{m>\xi} m^{-2\sigma}\bigg(\sum_{\substack{n<\xi^2\\n|m}}\lambda_n\bigg)^2\leq & \xi^{1-2\sigma}e^{2\kappa\alpha} \sum_{m>\xi} m^{-1-\frac{2\alpha}{\log q}}\bigg(\sum_{\substack{n<\xi^2\\n|m}}\lambda_n\bigg)^2\notag\\
        \leq & \xi^{1-2\sigma}e^{2\kappa\alpha}(e-1) \sum_{\ell=\lceil 2\alpha\kappa \rceil}^\infty e^{-\ell}\sum_{\xi<m\leq q^{\frac{\ell}{2\alpha}}} \frac{1}{m}\bigg(\sum_{\substack{n<\xi^2\\n|m}}\lambda_n\bigg)^2\notag\\
        = & \xi^{1-2\sigma}e^{2\kappa\alpha}(e-1) \sum_{\ell=\lceil 2\alpha\kappa \rceil}^\infty e^{-\ell}\Bigg\{-1+\sum_{m\leq q^{\frac{\ell}{2\alpha}}} \frac{1}{m}\bigg(\sum_{\substack{n<\xi^2\\n|m}}\lambda_n\bigg)^2\Bigg\}.
    \end{align}
    Here
    \begin{align*}
        \sum_{m\leq q^{\frac{\ell}{2\alpha}}} \frac{1}{m}\bigg(\sum_{\substack{n<\xi^2\\n|m}}\lambda_n\bigg)^2=&\sum_{m\leq q^{\frac{\ell}{2\alpha}}} \frac{1}{m}\sum_{\substack{n_i<\xi^2\\ \lcm(n_1,n_2)|m}}\lambda_{n_1}\lambda_{n_2}\\
        =&\sum_{n_i<\xi^2}\frac{\lambda_{n_1}\lambda_{n_2}}{n_1n_2}\gcd(n_1,n_2)\sum_{n\leq \frac{q^{\frac{\ell}{2\alpha}}}{\lcm(n_1,n_2)}}\frac{1}{n}.
    \end{align*}
    If $\alpha=\frac{1}{8\kappa}$ (for this we need $\sigma\geq \frac{1}{2}+\frac{1}{8\kappa \log q}$), then $q^{\frac{\ell}{2\alpha}}= q^{4\kappa\ell}=\xi^{4\ell}\geq \xi^4>\lcm(n_1,n_2)$ for each $\ell\geq \lceil 2\alpha\kappa \rceil=1$. Therefore, the above is
    \begin{align*}
        =& \sum_{n_i<\xi^2}\frac{\lambda_{n_1}\lambda_{n_2}}{n_1n_2}\gcd(n_1,n_2)\left(\log \frac{q^{4\kappa \ell}\gcd(n_1,n_2)}{n_1n_2}+O(1)\right)\\
        =& \frac{4\kappa \ell \log q}{\log \xi}+\frac{3}{2}-2+O\left(\frac{\kappa\ell\log q}{(\log\xi)^2}+\frac{\log\log\xi}{\log\xi}\right)\\
        =& 4\ell-\frac{1}{2}+O\left(\frac{\ell}{\kappa\log q}+\frac{\log\log\xi}{\log\xi}\right)
    \end{align*}
    where we inserted the three estimates from Lemma~\ref{lemma lambda_n gcd}. In view of \eqref{bound on L*psi 1} and \eqref{bound on L*psi 2}, we have
    \[
    \frac{1}{q-2}\sum_{\chi\neq \chi_0}\left(|L(\sigma+it,\chi)\psi(\sigma+it,\chi)|^2-1\right)\leq (C+o(1))\xi^{1-2\sigma}
    \]
    where 
    \begin{align*}
        C=&e^{1/4}(e-1)\sum_{\ell=1}^\infty e^{-\ell}\left(-1+4\ell-\frac{1}{2}\right)=e^{1/4}(e-1)\left(-\frac{3}{2(e-1)}+\frac{4e}{(e-1)^2}\right)\\
        =&6.199\ldots.
    \end{align*}
    This concludes the proof in the first case.
    
    For the other case, suppose that $\sigma>1-1/\log q$. Applying \cite[Lemma 7]{Sel46} to \eqref{expression L*psi^2} gives
    \begin{align*}
        \sum_{\chi\neq \chi_0} |L(\sigma+it,\chi)\psi(\sigma+it,\chi)|^2= &(q-1)\zeta(2\sigma)\sum_{n_i<\xi^2}\frac{\lambda_{n_1}\lambda_{n_2}}{(n_1n_2)^{2\sigma}}\gcd (n_1,n_2)^{2\sigma}\\
        &\quad - \bigg|\sum_{n=1}^q \frac{1}{n^{\sigma+it}}\sum_{n<\xi^2}\frac{\lambda_n}{n^{\sigma+it}}\bigg|^2\\
        &\quad + O\bigg((1+t^2)q^{1-\sigma}(\log q)^2\sum_{n_i<\xi^2}(n_1^{-\sigma}+n_2^{-\sigma})\bigg).
    \end{align*}
    The main term can be handled in the same way as before, the second term can be dropped because we are seeking an upper bound, and the error term is 
    \[
    \ll q^{3/2-\sigma-2\eps}(\log q)^2 \xi^{2+2(1-\sigma)}\log \xi
    \ll q^{3/2-\sigma-\eps/2}\xi^{1-2\sigma}=o(q\xi^{1-2\sigma}).
    \]
    The proof is now complete.
\end{proof}

The final lemma needed in this section is the following Littlewood-type result: 
\begin{lemma}[{\cite[Lemma 14]{Sel46}}]
\label{lemma littlewood}
    Let $f(s)$ be holomorphic in a neighborhood of the region $\sigma\geq \sigma'$, $t_1\leq t\leq t_2$ such that
    \begin{equation}\label{growth condition}
        f(s)=1+o(e^{-\pi\frac{\sigma-\sigma'}{t_2-t_1}})
    \end{equation}
    \footnote{In \eqref{growth condition} we have corrected Selberg's original statement where one has $\sigma$ instead of $\sigma-\sigma'$ in the exponent of the error term.} uniformly as $\sigma\to \infty$. Denote by $\beta+i\gamma$ the zeros of $f(s)$. Then
    \begin{align*}
        2(t_2-t_1)\sum_{\substack{\beta\geq \sigma'\\t_1\leq\gamma\leq t_2}} &\sin\left(\pi\frac{\gamma-t_1}{t_2-t_1}\right) \sinh\left(\pi\frac{\beta-\sigma'}{t_2-t_1}\right)\\
        =&\int_{t_1}^{t_2} \sin\left(\pi\frac{t-t_1}{t_2-t_1}\right) \log|f(\sigma'+it)|\md t\\
        &\quad+\int_{\sigma'}^\infty \sinh\left(\pi\frac{\sigma-\sigma'}{t_2-t_1}\right)(\log|f(\sigma+it_1)|+\log|f(\sigma+it_2)|)\md \sigma.
    \end{align*}
\end{lemma}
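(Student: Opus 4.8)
The plan is to prove this via the classical Littlewood lemma applied to a rectangle, combined with a careful integration against the harmonic-type weight $\sin\!\big(\pi\frac{t-t_1}{t_2-t_1}\big)\,\sinh\!\big(\pi\frac{\sigma-\sigma'}{t_2-t_1}\big)$. First I would recall Littlewood's lemma in its standard form: if $g(s)$ is holomorphic and nonzero on the boundary of a rectangle $R$ with vertices $\sigma'+it_1$, $X+it_1$, $X+it_2$, $\sigma'+it_2$ (for a large $X$), then $\int_{\partial R}\log g(s)\,\md s = 2\pi i\sum_{\rho\in R}(\rho_\sigma-\sigma')$ where $\rho=\rho_\sigma+i\rho_t$ runs over zeros of $g$ inside $R$ (with the convention that $\log g$ is the branch obtained by continuation from the right edge). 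Here $g(s)=f(s)$ and, after taking real parts appropriately, the number-weighted statement we want should fall out by choosing the test function cleverly rather than just using $g=f$ directly.

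The cleaner route is to introduce the auxiliary function $h(s) = \sin\!\big(\tfrac{\pi(s-i t_1)}{i(t_2-t_1)}\big)$ or rather to integrate $\log f(s)$ against $\,\mr{d}\!\left[\text{harmonic conjugate structure}\right]$; concretely, one applies Green's/Cauchy's theorem to $\log f(s)$ times the holomorphic function $e^{\pi i (s-\sigma'-it_1)/(t_2-t_1)}$ or a combination producing $\sin$ and $\sinh$ after separating real and imaginary parts. The key algebraic fact is that $\mr{Im}\big[i\,e^{-\pi i(s-\sigma'-it_1)/(t_2-t_1)}\big]$ and its companions reproduce exactly $\sin(\pi\frac{t-t_1}{t_2-t_1})$ on the vertical edges and $\sinh(\pi\frac{\sigma-\sigma'}{t_2-t_1})$ on the horizontal edges, while on the far-right edge $\sigma=X\to\infty$ the growth hypothesis \eqref{growth condition}, namely $f(s)=1+o(e^{-\pi(\sigma-\sigma')/(t_2-t_1)})$, forces $\log f(s)=o(e^{-\pi(\sigma-\sigma')/(t_2-t_1)})$, which beats the $\sinh\sim \tfrac12 e^{\pi(\sigma-\sigma')/(t_2-t_1)}$ growth of the weight so that the contribution of the right edge vanishes in the limit. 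So the steps in order are: (i) fix the branch of $\log f$ on the right edge where $f\approx 1$; (ii) write the contour integral of $\log f(s)$ against the chosen holomorphic kernel around $\partial R$; (iii) identify the left-edge (vertical, $\sigma=\sigma'$), top and bottom (horizontal, $t=t_1,t_2$), and right-edge ($\sigma=X$) pieces, taking $X\to\infty$ and using \eqref{growth condition} to kill the right edge; (iv) evaluate the residue/pole side, which by the argument-principle count of zeros $\beta+i\gamma$ of $f$ inside $R$ produces $\sum_{\beta\geq\sigma',\,t_1\leq\gamma\leq t_2}\sin(\pi\frac{\gamma-t_1}{t_2-t_1})\sinh(\pi\frac{\beta-\sigma'}{t_2-t_1})$ up to the factor $2(t_2-t_1)$; (v) match real and imaginary parts to isolate exactly the stated identity, noting that $\log|f|=\mr{Re}\log f$ appears precisely on the two horizontal edges and the one vertical edge in the form written.

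The main obstacle I expect is bookkeeping the branch of $\log f$ and the contribution of zeros lying \emph{on} the boundary or clustering near it, together with justifying the exchange of limit and integral as $X\to\infty$ under only the stated $o(\cdot)$ control — one must check that the kernel's exponential growth on the right edge is genuinely dominated, uniformly in $t\in[t_1,t_2]$, by the decay of $\log f$; this is where the corrected exponent $\sigma-\sigma'$ (rather than Selberg's $\sigma$) in \eqref{growth condition} is essential, since with $\sigma$ alone the comparison would fail when $\sigma'<0$. A secondary technical point is that $f$ is only assumed holomorphic in a neighborhood of the closed region $\sigma\geq\sigma'$, $t_1\leq t\leq t_2$, so one should first prove the identity for a slightly larger rectangle (or perturb $\sigma'$, $t_1$, $t_2$ infinitesimally to avoid zeros on the edges) and then pass to the limit, using that both sides are continuous in these parameters away from the (discrete) set of zero-ordinates. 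Since this is Selberg's Lemma 14 quoted verbatim, I would present the argument at the level of these five steps and refer to \cite{Sel46} for the remaining routine verifications.
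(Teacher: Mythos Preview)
The paper does not supply its own proof of this lemma: it is quoted directly as \cite[Lemma~14]{Sel46} and used as a black box, so there is no ``paper's proof'' to compare against. Your sketch is essentially the standard argument (and presumably close to Selberg's original): one integrates $\log f(s)$ against the holomorphic kernel $\cosh\!\big(\pi\frac{s-\sigma'-it_1}{t_2-t_1}\big)$, whose imaginary part is exactly $\sin\!\big(\pi\frac{t-t_1}{t_2-t_1}\big)\sinh\!\big(\pi\frac{\sigma-\sigma'}{t_2-t_1}\big)$, around the rectangle $[\sigma',X]\times[t_1,t_2]$; the growth hypothesis kills the right edge as $X\to\infty$, the residues at the zeros of $f$ produce the weighted sum on the left, and separating real parts on the remaining three edges yields the boundary integrals on the right. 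Your identification of the role of the corrected exponent $\sigma-\sigma'$ and of the boundary-zero/limit issues is accurate; referring back to \cite{Sel46} for the routine details is exactly what the paper itself does.
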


\begin{remark}\label{remark: L*psi growth condition}
    If $\xi=q^\kappa$ for some fixed $0<\kappa<\eps/2$ and $(t_2-t_1)\log q > \pi/\kappa+\delta$ for some fixed $\delta>0$, then $L(s,\chi)\psi(s,\chi)$ satisfies the growth condition \eqref{growth condition} in the preceding lemma (we may take $\sigma'=1/2$). To see this, we use the triangle inequality to trivially estimate, as $\sigma\to \infty$,
    \begin{align*}
        L(s,\chi)\psi(s,\chi)=&\sum_{m=1}^\infty \frac{\chi(m)}{m^s} \sum_{n<\xi^2} \frac{\lambda_n \chi(n)}{n^s}\\
        =& 1-\Bigg(\sum_{m=1}^\infty \frac{\chi(m)}{m^s}\Bigg)\Bigg(\sum_{\xi\leq n<\xi^2}\left(\frac{\log n}{\log \xi}-1\right)\frac{\mu(n)\chi(n)}{n^s}+\sum_{n\geq \xi^2}\frac{\mu(n)\chi(n)}{n^s}\Bigg)\\
        =& 1-o(\xi^{1-\sigma}).
    \end{align*}
    The error term is $o(e^{-\pi\frac{\sigma-1/2}{t_2-t_1}})$ as $\sigma\to \infty$ if $(t_2-t_1)\log q > \pi/\kappa+\delta$.
\end{remark}

We now proceed to the proof of our explicit zero-density result.
\subsection{Proof of Theorem~\ref{theorem zero density}}

Put 
\[
\tau=(t_2-t_1)\log q, \quad t_1'=t_1-\frac{a}{\log q}, \quad t_2'=t_2+\frac{a}{\log q}, \quad \sigma'=\sigma-\frac{b}{\log q}
\]
where $a,b$ are positive parameters to be determined. Later we will choose $a\leq \tau/2$ so that $|t_1'|$ and $|t_2'|$ are both $\leq 2q^{1/4-\eps}$, which would allow us to apply Lemma~\ref{lemma L*psi}. Further suppose that
\begin{equation}\label{condition on tau and a}
    \tau+2a>\frac{\pi}{\kappa}+\delta
\end{equation}
for some $\delta>0$. Then
\begin{align*}
    N(\sigma; t_1,t_2; \chi)\leq \sum_{\substack{\beta\geq \sigma\\t_1\leq\gamma\leq t_2}} \frac{\beta-\sigma'}{\sigma-\sigma'} \leq \frac{1}{b}\log q \sum_{\substack{\beta\geq \sigma'\\t_1\leq\gamma\leq t_2}} (\beta-\sigma').
\end{align*}
Since $\pi x<\sinh(\pi x)$ for all $x>0$ and 
\[
\sin\left(\pi\frac{\gamma-t_1'}{t_2'-t_1'}\right)\geq \sin\left(\pi \frac{t_1-t_1'}{t_2'-t_1'}\right)=\sin\left(\frac{\pi}{2} \frac{2a}{\tau+2a}\right)
\]
for all $t_1\leq \gamma \leq t_2$, we have
\[
\sin\left(\frac{\pi}{2} \frac{2a}{\tau+2a}\right)\sum_{\substack{\beta\geq \sigma'\\t_1 \leq \gamma\leq t_2}} \pi(\beta-\sigma')\leq (t_2'-t_1') \sum_{\substack{\beta\geq \sigma'\\t_1'\leq\gamma\leq t_2'}}  \sin\left(\pi\frac{\gamma-t_1'}{t_2'-t_1'}\right)\sinh\left(\pi \frac{\beta-\sigma'}{t_2'-t_1'}\right).
\]
The zeros of $L(s,\chi)$ are also zeros of $L(s,\chi)\psi(s,\chi)$, where $\psi$ is the function defined by \eqref{def: psi} with $\xi=q^{\kappa}$, $\kappa<\eps/2$. Moreover, Remark~\ref{remark: L*psi growth condition} allows us to apply Lemma~\ref{lemma littlewood} to $L(s,\chi)\psi(s,\chi)$ since $(t_2'-t_1')\log q=\tau+2a>\pi/\kappa+\delta$ by assumption \eqref{condition on tau and a}. Hence, Lemma~\ref{lemma littlewood} yields
\begin{equation}\label{N(t) bound}
    N(\sigma; t_1,t_2; \chi) \leq \frac{\log q}{2\pi b \cdot \sin(\frac{\pi}{2} \frac{2a}{\tau+2a})}\cdot \mc{I}_\chi
\end{equation}
where
\begin{align*}
    \mc{I}_\chi=&\int_{t_1'}^{t_2'} \sin\left(\pi\frac{t-t_1'}{t_2'-t_1'}\right) \log|L(\sigma'+it,\chi)\psi(\sigma'+it,\chi)|\md t\\
    &\hspace{2cm} +\int_{\sigma'}^\infty \sinh\left(\pi\frac{\alpha-\sigma'}{t_2-t_1}\right) \big(\log|L(\alpha+it_1',\chi)\psi(\alpha+it_1',\chi)|\\
    &\hspace{7cm}+\log|L(\alpha+it_2',\chi)\psi(\alpha+it_2',\chi)|\big) \md \alpha\\
    \leq &\frac{1}{2}\int_{t_1'}^{t_2'} \sin\left(\pi\frac{t-t_1'}{t_2'-t_1'}\right) \big(|L(\sigma'+it,\chi)\psi(\sigma'+it,\chi)|^2-1\big)\md t\\
    &\hspace{2cm} +\frac{1}{2}\int_{\sigma'}^\infty \sinh\left(\pi\frac{\alpha-\sigma'}{t_2'-t_1'}\right)\big(|L(\alpha+it_1',\chi)\psi(\alpha+it_1',\chi)|^2-1\\
    &\hspace{7cm}+|L(\alpha+it_2',\chi)\psi(\alpha+it_2',\chi)|^2-1\big) \md \alpha.
\end{align*}
Here we have used the inequality $\log |x|\leq \frac{|x|^2-1}{2}$, valid for all $x$. Summing over $\chi$ and applying Lemma~\ref{lemma L*psi}, we see that if $\sigma'\geq \frac{1}{2}+\frac{1}{8\kappa\log q}$, then
\begin{align*}
    \frac{1}{q-2}\sum_{\chi\neq \chi_0}\mc{I}_\chi
    <& \frac{1}{2}\int_{t_1'}^{t_2'} \sin\left(\pi\frac{t-t_1'}{t_2'-t_1'}\right)\md t \cdot 6.20\xi^{1-2\sigma'}+\frac{1}{2}\int_{\sigma'}^\infty e^{\pi \frac{\alpha-\sigma'}{t_2'-t_1'}} 6.20\xi^{1-2\alpha} \md \alpha\\
    \leq& \frac{6.20}{\pi} (t_2'-t_1')\xi^{1-2\sigma'}+ \frac{6.20}{2(2-\frac{\pi}{\kappa(\tau+2a)})\log \xi} \xi^{1-2\sigma'}\\
    =& \left(\frac{\tau+2a}{\pi} +\frac{1}{2(2-\frac{\pi}{\kappa(\tau+2a)})\kappa} \right) \frac{6.20 e^{2b\kappa} q^{\kappa(1-2\sigma)}}{\log q}.
\end{align*}
Inserting this into \eqref{N(t) bound}, we obtain
\begin{align*}
    \sum_{\chi\neq \chi_0} N(\sigma; t_1,t_2; \chi)<C'(\kappa,a,b,\tau) q^{1+\kappa(1-2\sigma)}(t_2-t_1)\log q
\end{align*}
where 
\begin{align*}
    C'(\kappa,a,b,\tau):=\frac{6.20 e^{2b\kappa }}{2\pi b \cdot \sin(\frac{\pi}{2} \frac{2a}{\tau+2a})}\left(\frac{\tau+2a}{\pi \tau}+\frac{1}{2(2-\frac{\pi}{\kappa (\tau+2a)})\kappa\tau}\right).
\end{align*}
If we ignore the second term in the bracket, which becomes negligible as $\tau\to \infty$, the entire expression is minimized when $2b\kappa=1$ and $2a/\tau= 0.82579\ldots$ (note that $a<\tau/2$, which we assumed early in the proof). Substituting the values of $a,b$ into the above expression, we find that
\[
C'(\kappa,a,b,\tau) < 4.79\kappa+\frac{4.12}{2\tau-1.73/\kappa}
\]
whenever 
\[
\sigma\geq \frac{1}{2}+\frac{1}{8\kappa\log q}+\frac{b}{\log q}=\frac{1}{2}+\frac{5}{8\kappa\log q}
\]
and
\[
1.8258\tau>\tau+2a>\frac{\pi}{\kappa}+\delta, \quad \text{or} \quad  \tau\geq \frac{1.73}{\kappa},
\]
which are the assumptions stated in the theorem. This completes the proof.

\section{Approximating $S(t,\chi)$ by Dirichlet polynomials pointwise}

Following \cite[\S 5]{Sel46}, in this section we relate $S(t,\chi)$ to a certain Dirichlet polynomial and give in Proposition~\ref{prop: approx S(t,chi)} a pointwise error estimate for the approximation. Analogous explicit calculations are done for the Riemann zeta-function in \cite{KK05}.

For $x\geq 2$, $\eta\geq 1$, define 
\begin{equation}\label{def sigma_chi}
    \sigma_{t,\chi}=\sigma_{x,t,\eta,\chi}:=\frac{1}{2}+2\cdot \max_{\substack{\beta\geq 1/2 \\ |t-\gamma|\leq \frac{x^{3(\beta-1/2)}}{\log x}}}\left\{\beta-\frac{1}{2}, \frac{\eta}{\log x}\right\},
\end{equation}
\[
\tilde{s}_{t,\chi}:=\sigma_{t,\chi}+it.
\]

\begin{lemma}\label{lemma sum 1/((sigma-beta)^2+(t-gamma)^2)}
    \[
    \sum_{\rho} \frac{\sigma_{t,\chi}-1/2}{(\sigma_{t,\chi}-\beta)^2+(t-\gamma)^2}\leq  \frac{5}{3}\left(\Re \frac{L'}{L}(\tilde{s}_{t,\chi},\chi)+ \frac{1}{2}\log q(|t|+1)+O(1)\right).
    \]
\end{lemma}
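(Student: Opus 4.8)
The plan is to exploit the standard partial-fraction expansion of $\frac{L'}{L}(s,\chi)$ coming from the Hadamard product, then extract the real part at $s=\tilde s_{t,\chi}$ and match it term-by-term against the sum we want to bound. Recall that for a non-principal $\chi \bmod q$ one has
\[
\frac{L'}{L}(s,\chi) = B(\chi) - \frac{1}{2}\log\frac{q}{\pi} - \frac12\frac{\Gamma'}{\Gamma}\Bigl(\frac{s+\delta_\chi}{2}\Bigr) + \sum_\rho\Bigl(\frac{1}{s-\rho}+\frac{1}{\rho}\Bigr),
\]
and $\Re B(\chi) = -\sum_\rho \Re\frac1\rho$, so that taking real parts collapses the $1/\rho$ terms and yields
\[
\Re\frac{L'}{L}(\tilde s_{t,\chi},\chi) = -\frac12\log\frac{q}{\pi} - \frac12\Re\frac{\Gamma'}{\Gamma}\Bigl(\frac{\tilde s_{t,\chi}+\delta_\chi}{2}\Bigr) + \sum_\rho \Re\frac{1}{\tilde s_{t,\chi}-\rho}.
\]
Writing $\sigma_0 = \sigma_{t,\chi}$ and $\rho = \beta+i\gamma$, each summand is $\Re\frac{1}{\sigma_0-\beta + i(t-\gamma)} = \frac{\sigma_0-\beta}{(\sigma_0-\beta)^2+(t-\gamma)^2}$, which is positive precisely because $\sigma_0 > 1 \ge \beta$ by construction of $\sigma_{t,\chi}$. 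This is close to, but not equal to, the sum we want, whose numerator is $\sigma_0 - 1/2$ rather than $\sigma_0 - \beta$.

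The next step is to compare the two sums. Since $\sigma_0 - 1/2 \le \sigma_0 - \beta + (\beta - 1/2)$ and, for every zero contributing, $\sigma_0 - \beta \ge \sigma_0 - 1 \ge \tfrac12(\sigma_0 - 1/2)$ (again from the definition of $\sigma_{t,\chi}$, which forces $\sigma_0 - 1/2 \ge 2(\beta-1/2)$ for the zeros ``seen'' by the max, and more crudely $\sigma_0 \ge 1 + \tfrac{2\eta}{\log x}$), one gets a bound of the shape
\[
\frac{\sigma_0 - 1/2}{(\sigma_0-\beta)^2+(t-\gamma)^2} \le \frac53\cdot\frac{\sigma_0-\beta}{(\sigma_0-\beta)^2+(t-\gamma)^2}
\]
for each $\rho$ — the constant $5/3$ being exactly what is needed to absorb the discrepancy between numerators $\sigma_0 - 1/2$ and $\sigma_0 - \beta$ uniformly over the relevant range of $\beta$; I would pin down the precise elementary inequality (likely $\sigma_0 - 1/2 \le \tfrac53(\sigma_0-\beta)$ whenever $\beta - 1/2 \le \tfrac25(\sigma_0 - 1/2)$, which holds by the $\beta - 1/2 \le \tfrac12(\sigma_0-1/2)$ built into $\sigma_{t,\chi}$... actually $\tfrac12 > \tfrac25$, so one uses it slightly more carefully, perhaps with the extra cushion from the $\eta/\log x$ term) and verify it dominates termwise. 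Summing over $\rho$ then gives
\[
\sum_\rho \frac{\sigma_0-1/2}{(\sigma_0-\beta)^2+(t-\gamma)^2} \le \frac53\sum_\rho \Re\frac{1}{\tilde s_{t,\chi}-\rho} = \frac53\Bigl(\Re\frac{L'}{L}(\tilde s_{t,\chi},\chi) + \frac12\log\frac{q}{\pi} + \frac12\Re\frac{\Gamma'}{\Gamma}\Bigl(\frac{\tilde s_{t,\chi}+\delta_\chi}{2}\Bigr)\Bigr).
\]

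The last step is to bound the gamma and $\log q$ contributions by $\tfrac12\log q(|t|+1) + O(1)$, absorbing the constant $5/3$ appropriately. By Stirling, $\Re\frac{\Gamma'}{\Gamma}\bigl(\frac{\sigma_0+it+\delta_\chi}{2}\bigr) = \log\bigl(\tfrac{|t|}{2}+O(1)\bigr) + O(\sigma_0)$, and since $\sigma_0 = 1/2 + 2\max\{\beta-1/2,\eta/\log x\}$ with $\beta < 1$ and $\eta/\log x$ bounded (as $x \ge 2$, $\eta \ge 1$ — here I would note that $\sigma_0$ stays bounded in the regime of interest, or at worst $O(\eta/\log x + 1) = O(1)$ under the running hypotheses), one gets $\tfrac12\log\tfrac{q}{\pi} + \tfrac12\Re\frac{\Gamma'}{\Gamma}(\cdots) \le \tfrac12\log q(|t|+1) + O(1)$. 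Combining with the previous display, and noting $\tfrac53 \cdot \tfrac12\log q(|t|+1) = \tfrac12\log q(|t|+1) + \tfrac13\log q(|t|+1)$ — wait, this overshoots, so actually the intended reading must be that the $\tfrac53$ multiplies only $\Re\frac{L'}{L}$ plus an $O(1)$, with the $\tfrac12\log q(|t|+1)$ term already accounting for the gamma factor \emph{before} the $5/3$; I would therefore organize the estimate so the factor $5/3$ is applied to $\Re\frac{L'}{L}(\tilde s_{t,\chi},\chi)$ and the gamma/conductor terms are bounded first, landing exactly on the claimed right-hand side. \textbf{The main obstacle} I anticipate is bookkeeping the constant: making sure the elementary termwise inequality genuinely holds with $5/3$ for \emph{all} zeros in the relevant box (including those with $\beta$ close to $1/2$, where $\sigma_0$ is governed by the $\eta/\log x$ branch and the ratio is largest), and then threading that constant through the gamma-factor estimate without degrading the $\tfrac12\log q(|t|+1)$ coefficient — this requires the definition \eqref{def sigma_chi} to give a quantitatively sharp enough lower bound on $\sigma_0 - \beta$ relative to $\sigma_0 - 1/2$, which is precisely why the factor $2$ (rather than, say, $1$) appears in front of the max in $\sigma_{t,\chi}$.
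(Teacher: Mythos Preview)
Your starting point---the Hadamard partial-fraction expansion yielding
\[
\sum_\rho \frac{\sigma_0-\beta}{(\sigma_0-\beta)^2+(t-\gamma)^2} = \Re\frac{L'}{L}(\tilde s_{t,\chi},\chi) + \frac{1}{2}\log q(|t|+1) + O(1)
\]
(writing $\sigma_0=\sigma_{t,\chi}$)---matches the paper exactly, as does the Stirling estimate for the $\Gamma'/\Gamma$ term. The gap is in the step converting the numerator $\sigma_0-\beta$ to $\sigma_0-\tfrac12$.

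First, the claims ``$\sigma_0>1\ge\beta$'' and ``$\sigma_0\ge 1+\tfrac{2\eta}{\log x}$'' are false: the definition \eqref{def sigma_chi} only gives $\sigma_0\ge \tfrac12+\tfrac{2\eta}{\log x}$, so $\sigma_0$ is typically just above $\tfrac12$. Nothing prevents $\beta>\sigma_0$ for a zero whose ordinate lies \emph{outside} the window in \eqref{def sigma_chi}; for such a zero $\sigma_0-\beta<0$ and your termwise inequality is meaningless (worse, that term contributes negatively to the right-hand side while the corresponding left-hand term is positive). Second, even for zeros inside the window the definition guarantees only $\beta-\tfrac12\le\tfrac12(\sigma_0-\tfrac12)$, which yields $\sigma_0-\tfrac12\le 2(\sigma_0-\beta)$, i.e.\ a termwise constant $2$, not $5/3$; your own arithmetic ($\tfrac12>\tfrac25$) already detects this.

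The idea you are missing is the functional-equation symmetry of the nontrivial zeros about the critical line: if $\beta+i\gamma$ is a zero of $L(s,\chi)$ then so is $(1-\beta)+i\gamma$. One proves the inequality for each such \emph{pair}, not termwise. Setting $a=\sigma_0-\tfrac12$, $b=\beta-\tfrac12\ge 0$, $c=t-\gamma$, the pairwise comparison reduces (after a short calculation) to $4b^2\le a^2+c^2$. For zeros in the window, $a\ge 2b$ gives this immediately; for zeros outside the window with $b>a/2\ge \eta/\log x$ one has $|c|>x^{3b}/\log x$, and since $b\log x>\eta\ge 1$ this easily forces $|c|>2b$. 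This is precisely what \cite[Lemma~6]{KK05} does, and is why the paper invokes it, explicitly noting that only the symmetry across the half-line is used. Finally, your worry about the factor $5/3$ multiplying $\tfrac12\log q(|t|+1)$ is misplaced: since the displayed identity above is an \emph{equality}, multiplying the entire right-hand side by $5/3$ is exactly what is claimed.
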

\begin{proof}
    Recall the representation formula 
    \[
    \Re\frac{L'}{L}(s,\chi)=- \frac{1}{2}\log \frac{q}{\pi}-\frac{1}{2}\Re \frac{\Gamma'}{\Gamma}\left(\frac{s+\mf{a}}{2}\right)+\sum_\rho \Re \frac{1}{s-\rho},
    \]
    where $\mf{a}=\frac{1}{2}(1-\chi(-1))$. This can be derived from the functional equation (see, e.g., the last page of \cite[\S 12]{Dav}). For $1/2\leq \sigma\leq 10$, say, the second term on the right-hand side is $-\frac{1}{2}\log(|t|+1)+O(1)$. Thus
    \[
    \sum_{\rho} \frac{\sigma_{t,\chi}-\beta}{(\sigma_{t,\chi}-\beta)^2+(t-\gamma)^2}=\Re\frac{L'}{L}(\tilde{s}_{t,\chi},\chi)+\frac{1}{2}\log q(|t|+1)+O(1).
    \]
    The rest of the proof is identical to that of \cite[Lemma 6]{KK05} which relies on an analogous formula for $\Re (\zeta'/\zeta)$ and uses only the symmetry of zeta zeros across the $1/2$-line (and not the real axis, so that the proof extends readily to $L(s,\chi)$).
\end{proof}

Let
\begin{equation}\label{def Lambda_x}
    \Lambda_x(n):=
    \begin{cases}
        \Lambda(n), & \text{if $1\leq n\leq x$},\\
        \Lambda(n)\frac{\log^2(x^3/n)-2\log^2(x^2/n)}{2\log^2 x}, & \text{if $x\leq n\leq x^2$},\\
        \Lambda(n)\frac{\log^2(x^3/n)}{2\log^2 x}, & \text{if $x^2\leq n\leq x^3$}.
    \end{cases}
\end{equation}
be a smoothed version of the von Mangoldt function. Selberg \cite[Lemma 15]{Sel46} proved that
\begin{multline}\label{L'/L}
    \frac{L'}{L}(s,\chi)=-\sum_{n<x^3}\frac{\Lambda_x(n)\chi(n)}{n^s}+\frac{1}{\log^2 x}\sum_{m=0}^\infty \frac{x^{-2m-\mf{a}-s}(1-x^{-2m-\mf{a}-s})^2}{(2m+\mf{a}+s)^2}\\
    +\frac{1}{\log^2 x}\sum_{\rho}\frac{x^{\rho-s}(1-x^{\rho-s})^2}{(s-\rho)^3},
\end{multline}
valid for $s\not \in\{-2m-\mf{a}: m\in \bb{N}_{\geq 0}\}$, $s\neq \rho$. This formula will be the basis for our approximation of $S(t,\chi)$. Let 
\[
r(x,t):=\sum_{n<x^3}\frac{\Lambda_x(n)\chi(n)}{n^{\tilde{s}_{t,\chi}}}.
\]

\begin{lemma}\label{lemma L'/L-Dirich poly}
    For $x\geq 2$ and $\sigma\geq \sigma_{t,\chi}$,
    \[
    \left|\frac{L'}{L}(s,\chi)+\sum_{n<x^3}\frac{\Lambda_x(n)\chi(n)}{n^s}\right|\leq A(\eta)x^{1/4-\sigma/2}\left(|r(x,t)|+\frac{1}{2}\log q(|t|+1)+O(1)\right)
    \]
    where
    \[
    A(\eta):=\frac{5(1+e^{-\eta})^2}{6\eta^2-5(1+e^{-\eta})^2e^{-\eta}}.
    \]
\end{lemma}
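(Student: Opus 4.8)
The plan is to work from Selberg's exact formula \eqref{L'/L} and bound each of the three pieces on the right-hand side when evaluated at $s=\sigma+it$ with $\sigma\geq\sigma_{t,\chi}$. The first term is precisely $-\sum_{n<x^3}\Lambda_x(n)\chi(n)n^{-s}$, so it cancels against the Dirichlet polynomial on the left side of the claimed inequality and contributes nothing. The second term (the ``gamma'' sum over $m$) is an absolutely convergent series in which each summand is controlled by $x^{-2m-\sigma}/(2m+\mf{a}+\sigma)^2$ times a bounded factor; since $\sigma\geq\sigma_{t,\chi}\geq 1/2+2\eta/\log x$, one has $x^{-\sigma}\leq x^{-1/2}x^{-2\eta/\log x}=x^{-1/2}e^{-2\eta}$, and summing the geometric-type series gives a bound of the shape $C\,x^{-\sigma}/\eta^2$, which is comfortably $\leq$ (a piece of) $A(\eta)x^{1/4-\sigma/2}$ after noting $x^{-\sigma}\leq x^{1/4-\sigma/2}$ when $\sigma\geq 1/2$. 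So the real content is the third term, the sum over zeros $\rho$.

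For the zero-sum, the first step is the pointwise estimate
\[
\left|\frac{x^{\rho-s}(1-x^{\rho-s})^2}{(s-\rho)^3}\right|\leq \frac{(1+x^{\beta-\sigma})^2\,x^{\beta-\sigma}}{|s-\rho|^3},
\]
and then to split the zeros according to whether $|s-\rho|$ is ``large'' or ``small.'' For zeros with $|s-\rho|\geq 1$, say, one uses $|s-\rho|^{-3}\leq |s-\rho|^{-1}$ combined with the standard density bound $\sum_\rho (1+(t-\gamma)^2)^{-1}=O(\log q(|t|+1))$ (which also underlies Lemma~\ref{lemma sum 1/((sigma-beta)^2+(t-gamma)^2)}) and the fact that $x^{\beta-\sigma}\leq 1$ since $\sigma\geq\sigma_{t,\chi}\geq\beta$ by the very definition \eqref{def sigma_chi} of $\sigma_{t,\chi}$; this produces a term of size $O(\log q(|t|+1))$ times a power of $x$. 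For zeros with $|s-\rho|<1$, the key point is that the definition of $\sigma_{t,\chi}$ forces these zeros to lie in a controlled region: if $|t-\gamma|\leq \frac{x^{3(\beta-1/2)}}{\log x}$ then $\sigma_{t,\chi}-1/2\geq 2(\beta-1/2)$, so $\sigma-\beta\geq \sigma_{t,\chi}-\beta\geq (\sigma_{t,\chi}-1/2)/2$, giving a lower bound on the real part of $s-\rho$; and if $|t-\gamma|$ is that large while $\beta$ is near $1/2$, the imaginary part of $s-\rho$ is bounded below. Either way $|s-\rho|\gg (\sigma_{t,\chi}-1/2)$ roughly, so $|s-\rho|^{-3}$ is controlled, and one converts $\sum_\rho |s-\rho|^{-3}$ over nearby zeros into $\frac{1}{\sigma_{t,\chi}-1/2}\sum_\rho \frac{\sigma_{t,\chi}-1/2}{(\sigma_{t,\chi}-\beta)^2+(t-\gamma)^2}$ and invokes Lemma~\ref{lemma sum 1/((sigma-beta)^2+(t-gamma)^2)}, which bounds this in terms of $\Re\frac{L'}{L}(\tilde s_{t,\chi},\chi)+\frac12\log q(|t|+1)+O(1)$. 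Finally, $\Re\frac{L'}{L}(\tilde s_{t,\chi},\chi)$ is controlled by $|r(x,t)|$ by applying the formula \eqref{L'/L} once more at the point $\tilde s_{t,\chi}$ itself (where the $m$-sum and $\rho$-sum are again small, precisely because $\sigma_{t,\chi}$ was chosen to make them so) — this is how $|r(x,t)|$ enters the bound.

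Assembling these pieces, the third term in \eqref{L'/L} at $s=\sigma+it$ is at most a constant (depending on $\eta$) times $\frac{x^{?-\sigma}}{\log^2 x}\cdot\frac{1}{\sigma_{t,\chi}-1/2}\cdot(|r(x,t)|+\frac12\log q(|t|+1)+O(1))$; using $\sigma_{t,\chi}-1/2\geq 2\eta/\log x$ turns the factor $\frac{1}{(\log^2 x)(\sigma_{t,\chi}-1/2)}$ into $\frac{1}{2\eta\log x}$, and one then checks that the accumulated powers of $x$ together with the $\eta$-dependent constants sum to exactly $A(\eta)x^{1/4-\sigma/2}$ as defined — the precise shape $\frac{5(1+e^{-\eta})^2}{6\eta^2-5(1+e^{-\eta})^2e^{-\eta}}$ being dictated by tracking the constant $5/3$ from Lemma~\ref{lemma sum 1/((sigma-beta)^2+(t-gamma)^2)}, the factors $(1+x^{\beta-\sigma})^2\leq(1+e^{-\eta})^2$ on the nearby zeros, and the self-referential bound for $\Re\frac{L'}{L}(\tilde s_{t,\chi})$ which must be solved for, producing the denominator $6\eta^2-5(1+e^{-\eta})^2e^{-\eta}$. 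The main obstacle I expect is precisely this bookkeeping of constants: one has to apply the Selberg formula twice (once at $s$, once at $\tilde s_{t,\chi}$), control the geometry of zeros near $\tilde s_{t,\chi}$ sharply enough that the resulting inequality for $\Re\frac{L'}{L}(\tilde s_{t,\chi})$ can be rearranged into a clean bound in terms of $|r(x,t)|$, and ensure the exponents of $x$ never exceed $1/4-\sigma/2$; getting an explicit $A(\eta)$ rather than an $O_\eta(1)$ requires being careful at every step where Selberg would have been content with a qualitative estimate.
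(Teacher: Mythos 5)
Your overall architecture matches the paper's: start from Selberg's formula \eqref{L'/L}, reduce everything to the sum over zeros, control that sum via Lemma~\ref{lemma sum 1/((sigma-beta)^2+(t-gamma)^2)}, and then apply the resulting estimate once more at $s=\tilde s_{t,\chi}$ and solve the self-referential inequality for $|\frac{L'}{L}(\tilde s_{t,\chi},\chi)|$ in terms of $|r(x,t)|$ --- that is exactly how the denominator $6\eta^2-5(1+e^{-\eta})^2e^{-\eta}$ arises in the paper as well. However, there is a genuine gap in your treatment of the zeros you call ``large'' ($|s-\rho|\geq 1$): you assert $x^{\beta-\sigma}\leq 1$ ``since $\sigma\geq\sigma_{t,\chi}\geq\beta$ by the very definition \eqref{def sigma_chi}.'' That inequality $\sigma_{t,\chi}\geq\beta$ holds only for zeros \emph{captured by the detection window} $|t-\gamma|\leq x^{3(\beta-1/2)}/\log x$; a zero with, say, $\beta=0.9$ lying just outside its window is perfectly compatible with $\sigma_{t,\chi}=\frac12+\frac{2\eta}{\log x}$, and for such a zero $x^{\beta-\sigma}$ is a positive power of $x$ when $\sigma$ is near $\sigma_{t,\chi}$. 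The only rescue is that such a zero forces $|t-\gamma|>x^{3(\beta-1/2)}/\log x$, and one must spend precisely one factor of this lower bound on $|t-\gamma|$ (keeping $(t-\gamma)^2$ for the Lemma~\ref{lemma sum 1/((sigma-beta)^2+(t-gamma)^2)} sum) to verify $x^{\beta-\sigma}(1+x^{\beta-\sigma})^2 x^{-3(\beta-1/2)}\leq x^{1/4-\sigma/2}(1+e^{-\eta})^2e^{-\eta}$; this is the paper's second case ($\beta>\frac12(\sigma_{t,\chi}+\frac12)$), and it is the nontrivial computation your sketch replaces with a false claim.

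A second, related defect: even granting your bound for the far zeros, handling them by $|s-\rho|^{-3}\leq|s-\rho|^{-1}$ plus the crude density estimate produces a term of size roughly $\frac{\log q(|t|+1)}{\log^2 x}$ \emph{without} the factor $x^{1/4-\sigma/2}$. The lemma as stated requires every term on the right, including $\frac12\log q(|t|+1)$, to carry the coefficient $A(\eta)x^{1/4-\sigma/2}$ (this uniform decay in $\sigma$ is what makes the later integration over $\sigma\geq\sigma_{t,\chi}$ in Proposition~\ref{prop: approx S(t,chi)} work and gives the explicit constant), so your route would at best prove a weaker inequality with a different, $\sigma$-independent error term. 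The fix is to abandon the split by the size of $|s-\rho|$ and instead split by $\beta$ relative to $\frac12(\sigma_{t,\chi}+\frac12)$, proving the single pointwise bound $R_\rho\leq\frac{(1+e^{-\eta})^2}{\eta}\,\frac{x^{1/4-\sigma/2}\log x}{(\sigma_{t,\chi}-\beta)^2+(t-\gamma)^2}$ valid for \emph{all} zeros, after which Lemma~\ref{lemma sum 1/((sigma-beta)^2+(t-gamma)^2)} and your bootstrap step go through and yield exactly $A(\eta)$.
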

\begin{proof}
    It follows from \eqref{L'/L} that
    \begin{equation}\label{eqn L'/L-Dirich poly}
        \left|\frac{L'}{L}(s,\chi)+\sum_{n<x^3}\frac{\Lambda_x(n)\chi(n)}{n^s}\right|\leq \frac{1}{\log^2 x}\sum_{\rho}R_\rho+O\left(\frac{x^{-\sigma}}{\log^2 x}\right).
    \end{equation}
    where
    \[
    R_\rho:=\frac{x^{\beta-\sigma}(1+x^{\beta-\sigma})^2}{[(\sigma-\beta)^2+(t-\gamma)^2]^{3/2}}.
    \]
    We consider two cases. On the one hand, if $\beta\leq \frac{1}{2}(\sigma_{t,\chi}+1/2)$, then 
    \[
    \beta-\sigma\leq \frac{\sigma_{t,\chi}}{2}+\frac{1}{4}-\sigma\leq \frac{1}{4}-\frac{\sigma}{2}\leq \frac{1}{4}-\frac{\sigma_{t,\chi}}{2} \leq -\frac{\eta}{\log x},
    \]
    so that
    \[
    R_\rho \leq \frac{x^{1/4-\sigma/2}(1+x^{-\eta/\log x})^2}{(\sigma-\beta)[(\sigma_{t,\chi}-\beta)^2+(t-\gamma)^2]}\leq \frac{(1+e^{-\eta})^2}{\eta} \frac{x^{1/4-\sigma/2}\log x}{(\sigma_{t,\chi}-\beta)^2+(t-\gamma)^2}.
    \]
    On the other hand, suppose that $\beta>\frac{1}{2}(\sigma_{t,\chi}+1/2)$. Since $\beta-1/2>\frac{1}{2}(\sigma_{t,\chi}-1/2)$, this zero must not be captured by the window in the definition~\ref{def sigma_chi} of $\sigma_{t,\chi}$. (The preceding inequality also implies that $\beta-1/2>\frac{\eta}{\log x}$.) Moreover, we claim that $\beta-1/2>|\sigma_{t,\chi}-\beta|$. Indeed, If $\sigma_{t,\chi}<\beta$, then $\beta-1/2>\beta-\sigma_{t,\chi}=|\sigma_{t,\chi}-\beta|$, and if $\sigma_{t,\chi}\geq \beta$, then $\beta-1/2>\sigma_{t,\chi}-\beta=|\sigma_{t,\chi}-\beta|$. Let $y=3(\beta-1/2)\log x>3\eta\geq 3$. We therefore have
    \[
    |t-\gamma|>\frac{x^{3(\beta-1/2)}}{\log x}= \frac{e^{y}}{y} 3\left(\beta-\frac{1}{2}\right)> e^3|\sigma_{t,\chi}-\beta|,
    \]
   and thus
    \begin{align*}
        ((\sigma-\beta)^2+(t-\gamma)^2)^{3/2}\geq& |t-\gamma|(t-\gamma)^2\\
        >& \frac{x^{3(\beta-1/2)}}{\log x}\frac{e^6}{e^6+1}[(\sigma_{t,\chi}-\beta)^2+(t-\gamma)^2].
    \end{align*}
    Further note that
    \begin{align*}
        \frac{x^{\beta-\sigma}(1+x^{\beta-\sigma})^2}{x^{3(\beta-1/2)}}
        =& x^{1/4-\sigma/2}x^{1/4-\sigma/2}x^{-2(\beta-1/2)}(1+x^{\beta-\sigma})^2\\
        \leq & x^{1/4-\sigma/2} e^{-\eta} x^{-2(\beta-1/2)}(1+x^{\beta-1/2})^2\\
        \leq& x^{1/4-\sigma/2} e^{-\eta} (1+x^{1/2-\beta})^2\\
        \leq& x^{1/4-\sigma/2}(1+e^{-\eta})^2 e^{-\eta}.
    \end{align*}
    Hence, 
    \begin{align*}
        R_\rho\leq& \frac{(1+e^{-\eta})^2}{e^{\eta}} \frac{e^6+1}{e^6} \frac{x^{1/4-\sigma/2}\log x}{(\sigma_{t,\chi}-\beta)^2+(t-\gamma)^2}\\
        <& \frac{(1+e^{-\eta})^2}{\eta} \frac{x^{1/4-\sigma/2}\log x}{(\sigma_{t,\chi}-\beta)^2+(t-\gamma)^2},
    \end{align*}
    so that the last estimate holds in either case (i.e., for all $\rho$). Inserting this back into \eqref{eqn L'/L-Dirich poly} and invoking Lemma~\ref{lemma sum 1/((sigma-beta)^2+(t-gamma)^2)}, we see that 
    \begin{align*}
        &\left|\frac{L'}{L}(s,\chi)+\sum_{n<x^3}\frac{\Lambda_x(n)\chi(n)}{n^s}\right|\\ 
        \leq& \frac{(1+e^{-\eta})^2}{\eta} \frac{x^{1/4-\sigma/2}}{\log x} \sum_\rho \frac{1}{(\sigma_{t,\chi}-\beta)^2+(t-\gamma)^2} + O\left(\frac{x^{-\sigma}}{\log^2 x}\right)\\
        \leq& \frac{5(1+e^{-\eta})^2}{3\eta} \frac{x^{1/4-\sigma/2}}{\log x} \left(\sigma_{t,\chi}-\frac{1}{2}\right)^{-1} \left(\Re \frac{L'}{L}(\tilde{s}_{t,\chi},\chi)+ \frac{1}{2}\log q(|t|+1)+O(1)\right)\\
        \leq & \frac{5(1+e^{-\eta})^2}{6\eta^2} x^{1/4-\sigma/2} \left(\left| \frac{L'}{L}(\tilde{s}_{t,\chi},\chi)\right|+ \frac{1}{2}\log q(|t|+1)+O(1)\right).
    \end{align*}
    Applying this estimate first to $\sigma=\sigma_{t,\chi}$ yields
    \begin{multline*}
    \left(1- \frac{5(1+e^{-\eta})^2}{6\eta^2} x^{1/4-\sigma_{t,\chi}/2}\right) \left|\frac{L'}{L}(\tilde{s}_{t,\chi},\chi)\right|\\
    \leq |r(x,t)|+ \frac{5(1+e^{-\eta})^2}{12\eta^2} x^{1/4-\sigma_{t,\chi}/2}\log q(|t|+1)+O(1).
    \end{multline*}
    The factor on the left-hand side is at least
    \[
    1-\frac{5(1+e^{-\eta})^2 }{6\eta^2 e^{\eta}}>0,
    \]
    so it follows that
    \begin{equation}\label{L'/L at sigma_t,chi}
        \left|\frac{L'}{L}(\tilde{s}_{t,\chi},\chi)\right|\leq \frac{1}{1-\frac{5(1+e^{-\eta})^2 }{6\eta^2 e^{\eta}}}|r(x,t)|
        +\frac{5(1+e^{-\eta})^2}{12\eta^2 e^{\eta}-10(1+e^{-\eta})^2}\log q(|t|+1)+O(1).
    \end{equation}
    We obtain the stated bound in the lemma upon combining this with the previous estimate. 
\end{proof}

\begin{corollary}\label{corollary sum 1/((sigma-beta)^2+(t-gamma)^2}
    \[
    \sum_{\rho} \frac{\sigma_{t,\chi}-1/2}{(\sigma_{t,\chi}-\beta)^2+(t-\gamma)^2}\leq  \frac{5}{3}\frac{1}{1-\frac{5(1+e^{-\eta})^2}{6\eta^2 e^\eta}} \left(|r(x,t)|+\frac{1}{2}\log q(|t|+1)+O(1)\right).
    \]
\end{corollary}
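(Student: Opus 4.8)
The plan is to obtain the corollary by chaining together two estimates that were already derived inside the proof of Lemma~\ref{lemma L'/L-Dirich poly}. Starting from Lemma~\ref{lemma sum 1/((sigma-beta)^2+(t-gamma)^2)} and discarding a sign via the trivial bound $\Re \frac{L'}{L}(\tilde{s}_{t,\chi},\chi)\leq \left|\frac{L'}{L}(\tilde{s}_{t,\chi},\chi)\right|$, one reduces the task to bounding $\left|\frac{L'}{L}(\tilde{s}_{t,\chi},\chi)\right|$ in terms of $|r(x,t)|$ and $\log q(|t|+1)$; this is precisely the content of the intermediate inequality \eqref{L'/L at sigma_t,chi}. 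So the corollary is essentially a substitution followed by a small amount of algebraic consolidation.

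Concretely, I would set $c:=\frac{5(1+e^{-\eta})^2}{6\eta^2 e^\eta}\in(0,1)$, so that \eqref{L'/L at sigma_t,chi} reads
\[
\left|\frac{L'}{L}(\tilde{s}_{t,\chi},\chi)\right|\leq \frac{1}{1-c}|r(x,t)|+\frac{c/2}{1-c}\log q(|t|+1)+O(1),
\]
after observing that $\frac{5(1+e^{-\eta})^2}{12\eta^2 e^\eta-10(1+e^{-\eta})^2}=\frac{c/2}{1-c}$. Inserting this into the bound
\[
\sum_{\rho}\frac{\sigma_{t,\chi}-1/2}{(\sigma_{t,\chi}-\beta)^2+(t-\gamma)^2}\leq \frac{5}{3}\left(\left|\frac{L'}{L}(\tilde{s}_{t,\chi},\chi)\right|+\frac{1}{2}\log q(|t|+1)+O(1)\right)
\]
coming from Lemma~\ref{lemma sum 1/((sigma-beta)^2+(t-gamma)^2)} and the triangle inequality, and then using the identity $\frac{c/2}{1-c}+\frac{1}{2}=\frac{1/2}{1-c}$ to merge the two $\log q(|t|+1)$ contributions (together with $O(1)\le \frac{1}{1-c}O(1)$ to absorb the error terms), yields
\[
\sum_{\rho}\frac{\sigma_{t,\chi}-1/2}{(\sigma_{t,\chi}-\beta)^2+(t-\gamma)^2}\leq \frac{5}{3}\cdot\frac{1}{1-c}\left(|r(x,t)|+\frac{1}{2}\log q(|t|+1)+O(1)\right),
\]
which, upon unwinding the definition of $c$, is exactly the claimed inequality.

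There is no real obstacle here, since all the analytic work has already been done in Lemmas~\ref{lemma sum 1/((sigma-beta)^2+(t-gamma)^2)} and \ref{lemma L'/L-Dirich poly}. The only points deserving a moment of care are the bookkeeping of the absolute constants — in particular checking the two coefficient identities $\frac{5(1+e^{-\eta})^2}{12\eta^2 e^\eta-10(1+e^{-\eta})^2}=\frac{c/2}{1-c}$ and $\frac{c/2}{1-c}+\frac12=\frac{1/2}{1-c}$ — and making sure the various $O(1)$ remainders are absorbed in the right direction, which is legitimate because $\frac{1}{1-c}>1$.
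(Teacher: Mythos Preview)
Your proposal is correct and is exactly the approach taken in the paper, which simply states that the corollary follows by inserting \eqref{L'/L at sigma_t,chi} into Lemma~\ref{lemma sum 1/((sigma-beta)^2+(t-gamma)^2)}. Your write-up merely spells out the algebra (the identities $\frac{5(1+e^{-\eta})^2}{12\eta^2 e^\eta-10(1+e^{-\eta})^2}=\frac{c/2}{1-c}$ and $\frac{c/2}{1-c}+\frac12=\frac{1/2}{1-c}$, and the absorption of the $O(1)$ terms via $\frac{1}{1-c}>1$) that the paper leaves implicit.
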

\begin{proof}
    This follows by inserting \eqref{L'/L at sigma_t,chi} into Lemma~\ref{lemma sum 1/((sigma-beta)^2+(t-gamma)^2)}.
\end{proof}

The main result in this section is the following:
\begin{proposition}[c.f. {\cite[Theorem 5]{Sel46}}]\label{prop: approx S(t,chi)}
    For $x\geq 2$,
    \begin{multline*}
        \left|S(t,\chi)-\frac{1}{\pi}\Im \sum_{n<x^3}\frac{\Lambda_x(n)\chi(n)}{n^{\tilde{s}_{t,\chi}}\log n}\right|\\
        \leq \frac{1}{\pi}\left(\sigma_{t,\chi}-\frac{1}{2}\right) \left(B_1(\eta)|r(x,t)|
        +B_2(\eta)\log q(|t|+1)+O(1)\right)
    \end{multline*}
    where
    \[
    B_1(\eta):=\frac{5(1+e^{-\eta})^2+(5\pi+1)6\eta^3 e^{\eta}}{6\eta^3 e^{\eta}-5(1+e^{-\eta})^2\eta}, \:\: B_2(\eta):=\frac{5(1+e^{-\eta})^2(1+\eta)+30\pi\eta^3 e^{\eta}}{12\eta^3 e^{\eta}-10(1+e^{-\eta})^2\eta}.
    \]
\end{proposition}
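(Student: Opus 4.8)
The plan is to start from $\pi S(t,\chi)=-\int_{1/2}^\infty\Im\frac{L'}{L}(\sigma+it,\chi)\md\sigma$, split the integral at the abscissa $\sigma_{t,\chi}$, and treat the tail $[\sigma_{t,\chi},\infty)$ and the short segment $[1/2,\sigma_{t,\chi}]$ separately. So
\[
\pi S(t,\chi)-\Im\sum_{n<x^3}\frac{\Lambda_x(n)\chi(n)}{n^{\tilde s_{t,\chi}}\log n}=-\int_{1/2}^{\sigma_{t,\chi}}\Im\frac{L'}{L}(\sigma+it,\chi)\md\sigma+\mc{E}_{\mathrm{tail}},
\]
where I claim $\mc{E}_{\mathrm{tail}}$ is negligible and the segment integral carries the remaining weight.

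\textbf{The tail.} For $\sigma\ge\sigma_{t,\chi}$, Lemma~\ref{lemma L'/L-Dirich poly} bounds $\frac{L'}{L}(\sigma+it,\chi)+\sum_{n<x^3}\Lambda_x(n)\chi(n)n^{-\sigma-it}$ by $A(\eta)x^{1/4-\sigma/2}(|r(x,t)|+\tfrac12\log q(|t|+1)+O(1))$. Interchanging the $\sigma$-integral with the finite sum and using $\int_{\sigma_{t,\chi}}^\infty n^{-\sigma}\md\sigma=n^{-\sigma_{t,\chi}}/\log n$ (the $n=1$ term is absent since $\Lambda_x(1)=0$) turns the main part of $-\int_{\sigma_{t,\chi}}^\infty\Im\frac{L'}{L}(\sigma+it,\chi)\md\sigma$ into exactly $\Im\sum_{n<x^3}\Lambda_x(n)\chi(n)n^{-\tilde s_{t,\chi}}/\log n$. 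For the error I would compute $\int_{\sigma_{t,\chi}}^\infty x^{1/4-\sigma/2}\md\sigma=\tfrac{2}{\log x}e^{-(\sigma_{t,\chi}-1/2)(\log x)/2}$ and apply $(\sigma_{t,\chi}-\tfrac12)\log x\ge 2\eta$ twice (to replace the exponential by $e^{-\eta}$ and $\tfrac2{\log x}$ by $\tfrac{\sigma_{t,\chi}-1/2}{\eta}$), giving $|\mc{E}_{\mathrm{tail}}|\le\tfrac1\pi\cdot\tfrac{A(\eta)e^{-\eta}}{\eta}(\sigma_{t,\chi}-\tfrac12)(|r(x,t)|+\tfrac12\log q(|t|+1)+O(1))$. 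Since $\tfrac{A(\eta)e^{-\eta}}{\eta}=\tfrac{5(1+e^{-\eta})^2}{6\eta^3e^\eta-5(1+e^{-\eta})^2\eta}$, this is precisely the first summand of the numerator of $B_1$ (over the common denominator), and its halved analogue in $B_2$.

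\textbf{The segment $[1/2,\sigma_{t,\chi}]$.} Here I would write $\int_{1/2}^{\sigma_{t,\chi}}\Im\frac{L'}{L}(\sigma+it,\chi)\md\sigma=(\sigma_{t,\chi}-\tfrac12)\Im\frac{L'}{L}(\tilde s_{t,\chi},\chi)+\int_{1/2}^{\sigma_{t,\chi}}\big(\Im\frac{L'}{L}(\sigma+it,\chi)-\Im\frac{L'}{L}(\tilde s_{t,\chi},\chi)\big)\md\sigma$. The first term is $\le(\sigma_{t,\chi}-\tfrac12)|\frac{L'}{L}(\tilde s_{t,\chi},\chi)|$, which is bounded by \eqref{L'/L at sigma_t,chi}; it contributes the ``$+1$'' inside the $5\pi+1$ of $B_1$ (with its share of $B_2$). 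For the second term, differencing the Hadamard partial-fraction expansion at $\sigma+it$ and $\tilde s_{t,\chi}$ cancels the constant $B(\chi)$, leaving $\sum_\rho\big(\frac1{\sigma+it-\rho}-\frac1{\tilde s_{t,\chi}-\rho}\big)$ plus a $\Gamma$-quotient difference that is $O\!\big((\sigma_{t,\chi}-\tfrac12)/(|t|+1)\big)$ pointwise, hence $(\sigma_{t,\chi}-\tfrac12)O(1)$ after integration; the far-off zeros $|\gamma-t|>1$ contribute $(\sigma_{t,\chi}-\tfrac12)O(\log q(|t|+1))$ routinely from $\sum_{|\gamma-t|>1}(t-\gamma)^{-2}=O(\log q(|t|+1))$. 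For each remaining zero one has $\int_{1/2}^{\sigma_{t,\chi}}\Im\frac1{\sigma+it-\rho}\md\sigma=\arctan\tfrac{\sigma_{t,\chi}-\beta}{t-\gamma}-\arctan\tfrac{1/2-\beta}{t-\gamma}$, which I would estimate via the Lipschitz/concavity bounds for $\arctan$ together with $\int_{1/2}^{\sigma_{t,\chi}}|\Im\frac1{\sigma+it-\rho}|\md\sigma\le\min\{\pi,(\sigma_{t,\chi}-\tfrac12)/|t-\gamma|\}$, and compare these contributions against the terms $\tfrac{\sigma_{t,\chi}-1/2}{(\sigma_{t,\chi}-\beta)^2+(t-\gamma)^2}$ of the sum bounded by Corollary~\ref{corollary sum 1/((sigma-beta)^2+(t-gamma)^2}. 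The goal is a bound of the shape $3\pi(\sigma_{t,\chi}-\tfrac12)\sum_\rho\tfrac{\sigma_{t,\chi}-1/2}{(\sigma_{t,\chi}-\beta)^2+(t-\gamma)^2}+(\sigma_{t,\chi}-\tfrac12)O(\log q(|t|+1))$; feeding this into Corollary~\ref{corollary sum 1/((sigma-beta)^2+(t-gamma)^2} (whose constant is $\tfrac53(1-\tfrac{5(1+e^{-\eta})^2}{6\eta^2e^\eta})^{-1}$) produces the $5\pi$-part of $B_1,B_2$, and adding all three contributions and simplifying over the common denominator gives the stated closed forms of $B_1(\eta)$ and $B_2(\eta)$.

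\textbf{The main obstacle.} The delicate point is precisely this last comparison for zeros with $|t-\gamma|$ in the intermediate range $\sigma_{t,\chi}-\tfrac12<|t-\gamma|\le1$. For such zeros the per-zero bound $\int_{1/2}^{\sigma_{t,\chi}}|\Im\frac1{\sigma+it-\rho}|\md\sigma\le(\sigma_{t,\chi}-\tfrac12)/|t-\gamma|$, summed with the Riemann--von Mangoldt count $N(t+h,\chi)-N(t-h,\chi)=O((h+1)\log q(|t|+1))$, only yields a total of size $O(\log q(|t|+1))$, which is far larger than the target $O\!\big((\sigma_{t,\chi}-\tfrac12)\log q(|t|+1)\big)$ when $\sigma_{t,\chi}-\tfrac12$ is small — so no clean per-zero estimate against the Corollary's summand can work. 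The way out, and where the real work lies, is to observe that a cluster of intermediate zeros at distance $\asymp a$ from $t$ makes $\sum_\rho\tfrac{\sigma_{t,\chi}-1/2}{(\sigma_{t,\chi}-\beta)^2+(t-\gamma)^2}$, hence (via Lemma~\ref{lemma sum 1/((sigma-beta)^2+(t-gamma)^2)} and \eqref{L'/L at sigma_t,chi}) the quantity $|r(x,t)|$, of order $(\sigma_{t,\chi}-\tfrac12)\,\#\{\text{cluster}\}/a^2$, so that the term $(\sigma_{t,\chi}-\tfrac12)B_1|r(x,t)|$ in the target inequality already absorbs the cluster's contribution $\asymp(\sigma_{t,\chi}-\tfrac12)\,\#\{\text{cluster}\}/a$; carrying out this compensation uniformly in $a$ and in the size of $\sigma_{t,\chi}-\tfrac12$ (splitting the zeros according to whether $|t-\gamma|$ is comparable to, or much larger than, $\sigma_{t,\chi}-\tfrac12$) is the technical heart of the argument.
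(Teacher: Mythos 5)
Your decomposition is the same as the paper's: the tail $-\int_{\sigma_{t,\chi}}^\infty$, the term $(\sigma_{t,\chi}-\tfrac12)\Im\frac{L'}{L}(\tilde{s}_{t,\chi},\chi)$, and the integrated difference on $[1/2,\sigma_{t,\chi}]$; your bookkeeping for the first two pieces (giving the first summand of the numerators of $B_1,B_2$ via $A(\eta)e^{-\eta}/\eta$, and the ``$+1$'' in $5\pi+1$ via \eqref{L'/L at sigma_t,chi}) is exactly the paper's. The gap is in the segment term, which is where the paper uses Selberg's argument to get $|J_3|\leq 3\pi(\sigma_{t,\chi}-\tfrac12)^2\sum_\rho[(\sigma_{t,\chi}-\beta)^2+(t-\gamma)^2]^{-1}+O(\sigma_{t,\chi}-\tfrac12)$ and then Corollary~\ref{corollary sum 1/((sigma-beta)^2+(t-gamma)^2}. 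Two concrete problems. First, your separate treatment of zeros with $|\gamma-t|>1$ via the classical zero count produces a term $(\sigma_{t,\chi}-\tfrac12)\,O(\log q(|t|+1))$ with an unspecified constant; that already destroys the explicit coefficient $B_2(\eta)$, which is the whole point of the proposition. Second, the ``main obstacle'' you describe for $\sigma_{t,\chi}-\tfrac12<|t-\gamma|\leq 1$ is an artifact of bounding $\Im\frac{1}{\sigma+it-\rho}$ and $\Im\frac{1}{\tilde{s}_{t,\chi}-\rho}$ separately. Bounding their difference instead, $\bigl|\frac{1}{\tilde{s}_{t,\chi}-\rho}-\frac{1}{\sigma+it-\rho}\bigr|=\frac{\sigma_{t,\chi}-\sigma}{|\tilde{s}_{t,\chi}-\rho|\,|\sigma+it-\rho|}\leq\frac{\sigma_{t,\chi}-\sigma}{(t-\gamma)^2}$, the per-zero integral over $[1/2,\sigma_{t,\chi}]$ is at most $\tfrac12(\sigma_{t,\chi}-\tfrac12)^2/(t-\gamma)^2$, which has the quadratic decay needed to compare against the Corollary's summand; the defining window property of $\sigma_{t,\chi}$ (as exploited in the proof of Lemma~\ref{lemma L'/L-Dirich poly}) gives $|\sigma_{t,\chi}-\beta|\ll|t-\gamma|$ for such zeros, and for close zeros $|t-\gamma|\leq\sigma_{t,\chi}-\tfrac12$ it gives $\sigma_{t,\chi}-\beta\geq\tfrac12(\sigma_{t,\chi}-\tfrac12)$, so the per-zero contributions ($\leq\pi$ and $\leq 1$) are beaten by the summand up to an absolute constant. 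This is precisely how the uniform constant $3\pi$ arises, with no dyadic clustering analysis and no extra $\log$ term.

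By contrast, your proposed ``cluster compensation'' through $|r(x,t)|$ is left unexecuted, and as sketched it cannot close the gap: for a cluster at distance $a\gg\sigma_{t,\chi}-\tfrac12$ the quantity you would gain from $(\sigma_{t,\chi}-\tfrac12)B_1|r(x,t)|$ is of size $(\sigma_{t,\chi}-\tfrac12)^2\#/a^2$, which is smaller than the contribution $(\sigma_{t,\chi}-\tfrac12)\#/a$ you are trying to absorb (the inequality goes the wrong way exactly in the intermediate range you are worried about); moreover Lemma~\ref{lemma sum 1/((sigma-beta)^2+(t-gamma)^2)} and \eqref{L'/L at sigma_t,chi} only give upper bounds of the zero sum by $|r(x,t)|$ plus a $\log q(|t|+1)$ term, so lower-bounding $|r(x,t)|$ by the cluster sum requires extra care. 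As written, the key estimate for $J_3$ is therefore not established, and without it neither the $5\pi$ parts of $B_1,B_2$ nor the explicit coefficient of $\log q(|t|+1)$ in the proposition follows; replacing your intermediate-zero discussion by the difference bound above (Selberg's argument, which the paper cites) repairs the proof and recovers the stated constants.
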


\begin{proof}
    We start by decomposing $S(t,\chi)$ into three terms:
    \begin{equation}
        \pi S(t,\chi)=-\int_{1/2}^\infty \Im \frac{L'}{L}(\sigma+it,\chi)\md \sigma=J_1+J_2+J_3,
    \end{equation}
    where
    \begin{align*}
        J_1:=&-\int_{\sigma_{t,\chi}}^\infty \Im \frac{L'}{L}(\sigma+it,\chi)\md \sigma, \quad J_2:=-\left(\sigma_{t,\chi}-\frac{1}{2}\right)\Im \frac{L'}{L}(\tilde{s}_{t,\chi},\chi),\\
        J_3:=&\int_{1/2}^{\sigma_{t,\chi}} \Im\left\{\frac{L'}{L}(\sigma_{t,\chi}+it,\chi)-\frac{L'}{L}(\sigma+it,\chi)\right\}\md \sigma.
    \end{align*}
    Lemma~\ref{lemma L'/L-Dirich poly} implies that
    \begin{multline*}
        \left|J_1-\int_{\sigma_{t,\chi}}^\infty  \Im \sum_{n<x^3}\frac{\Lambda_x(n)\chi(n)}{n^{\sigma+it}}\md \sigma\right|\\
        \leq A(\eta)\left(|r(x,t)|+\frac{1}{2}\log q(|t|+1)+O(1)\right) \int_{\sigma_{t,\chi}}^\infty x^{1/4-\sigma/2} \md\sigma.
    \end{multline*}
    The last integral is
    \[
    =\frac{2}{\log x}x^{1/4-\sigma_{t,\chi}/2}\leq \frac{2}{e^{\eta}\log x}\leq \frac{\sigma_{t,\chi}-1/2}{\eta e^{\eta}},
    \]
    and thus
    \[
    \left|J_1-\Im \sum_{n<x^3}\frac{\Lambda_x(n)\chi(n)}{n^{\tilde{s}_{t,\chi}}\log n}\right|\leq \frac{A(\eta)}{\eta e^{\eta}}\left(\sigma_{t,\chi}-\frac{1}{2}\right)\left(|r(x,t)|+\frac{1}{2}\log q(|t|+1)+O(1)\right).
    \]
    The term $|J_2|$ can be bounded directly by \eqref{L'/L at sigma_t,chi}. For $|J_3|$, observe from \cite[\S 12, display (17)]{Dav} that
    \begin{multline*}
        \Im\left\{\frac{L'}{L}(\tilde{s}_{t,\chi},\chi)-\frac{L'}{L}(s,\chi)\right\}=-\frac{1}{2}\Im\left\{\frac{\Gamma'}{\Gamma}\left(\frac{\tilde{s}_{t,\chi}+\mf{a}}{2}\right)-\frac{\Gamma'}{\Gamma}\left(\frac{s+\mf{a}}{2}\right)\right\}\\
        +\sum_\rho \Im \left\{\frac{1}{\tilde{s}_{t,\chi}-\rho}-\frac{1}{s-\rho}\right\}.
    \end{multline*}
    Then, according to Selberg's argument in \cite[pp. 51--52]{Sel46}, we have
    \begin{align*}
        |J_3|\leq& 3\pi \left(\sigma_{t,\chi}-\frac{1}{2}\right)^2 \sum_\rho \frac{1}{(\sigma_{t,\chi}-\beta)^2+(t-\gamma)^2}+O\left(\sigma_{t,\chi}-\frac{1}{2}\right)\\
        \leq& \left(\sigma_{t,\chi}-\frac{1}{2}\right)\frac{5\pi}{1-\frac{5(1+e^{-\eta})^2}{6\eta^2 e^{\eta}}}\left(|r(x,t)|+\frac{1}{2}\log q(|t|+1)+O(1)\right)
    \end{align*}
    where we applied Corollary~\ref{corollary sum 1/((sigma-beta)^2+(t-gamma)^2} in the second inequality. Collecting these estimates gives the proposition.
\end{proof}

\section{Approximating $S(t,\chi)$ by Dirichlet polynomials on average}

Using the preliminary results proved in the last two sections, we further develop our approximation of $S(t,\chi)$ by taking higher moments and averaging over $\chi\bmod q$, leading to Proposition~\ref{prop: selberg theorem 7}. We start with a general mean value estimate involving a prime sum twisted by $\chi$.

\begin{lemma}\label{lemma square prime sum}
Let $\{a_p\}$ be any sequence of complex numbers, $\ell,k\in \bb{N}$ and $2\leq y\leq q^{\frac{1}{\ell k}}$. Then, as $q\to \infty$,
\[
\frac{1}{q-2}\sum_{\chi\neq \chi_0}\left|\sum_{p\leq y}\frac{a_p\chi(p^\ell)}{p^\sigma}\right|^{2k}\leq (1+o(1))k!\left(\sum_{p\leq y}\frac{|a_p|^2}{p^{2\sigma}}\right)^k.
\]
\end{lemma}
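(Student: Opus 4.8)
The plan is to expand the $2k$-th power, interchange the order of summation, and exploit the near-orthogonality of Dirichlet characters modulo $q$ together with the hypothesis $y\le q^{1/k}$, which ensures that the products of primes appearing stay below $q$ so that characters detect genuine equalities of integers.

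First I would write
\[
\sum_{p\le y}\frac{a_p\chi(p^\ell)}{p^\sigma}
=\sum_{p\le y}\frac{a_p\chi(p)^\ell}{p^\sigma},
\]
raise to the power $k$ and to its conjugate, and multiply out:
\[
\left|\sum_{p\le y}\frac{a_p\chi(p^\ell)}{p^\sigma}\right|^{2k}
=\sum_{p_1,\dots,p_k\le y}\;\sum_{q_1,\dots,q_k\le y}
\frac{a_{p_1}\cdots a_{p_k}\,\overline{a_{q_1}}\cdots\overline{a_{q_k}}}
{(p_1\cdots p_k\,q_1\cdots q_k)^\sigma}\,
\chi\!\left((p_1\cdots p_k)^\ell\right)\overline{\chi}\!\left((q_1\cdots q_k)^\ell\right).
\]
Summing over the $q-2$ non-principal characters and adding back the principal character (which only increases a nonnegative quantity once we later pass to absolute values — or, more carefully, bounding the $\chi_0$ contribution trivially), I would use
\[
\sum_{\chi\bmod q}\chi(m)\overline{\chi}(n)=(q-1)\quad\text{if }m\equiv n\pmod q,\ (mn,q)=1,
\]
and $0$ otherwise. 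The crucial point: since each $p_i,q_j\le y\le q^{1/k}$, we have $p_1\cdots p_k\le q$ and $q_1\cdots q_k\le q$ (and likewise their $\ell$-th powers need care — but note $m=(p_1\cdots p_k)^\ell$, $n=(q_1\cdots q_k)^\ell$ can exceed $q$ when $\ell=2$). So the clean diagonal argument applies directly to $P:=p_1\cdots p_k$ and $Q:=q_1\cdots q_k$: the congruence $P^\ell\equiv Q^\ell\pmod q$ with $1\le P,Q\le q$ forces either $P=Q$ or $P\equiv -Q$, but in fact one invokes instead the cruder inequality obtained by summing $|{\cdot}|$, i.e. I expect the intended route is to bound the character sum in absolute value after noting the sum over $\chi\ne\chi_0$ of $\chi(m)\overline{\chi}(n)$ has absolute value at most $q-1$ when $m\equiv\pm n$ and is exactly $-1$ otherwise, then keep only the diagonal $P=Q$ and absorb the rest.

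The clean way that gives the stated strict inequality with no extra constant is: $\sum_{\chi\ne\chi_0}\chi(m)\overline\chi(n)$ equals $q-2$ if $m\equiv n\pmod q$ with $(mn,q)=1$ and equals $-1$ if $m\not\equiv n$; since $1\le P,Q<q$, the congruence $P\equiv Q$ is equivalent to $P=Q$, and similarly $(P^\ell)\equiv(Q^\ell)$ can be handled by first reducing to the $P=Q$ diagonal because $P^\ell,Q^\ell<q^2$ and any off-diagonal term contributes a negative multiple $-1$; thus
\[
\frac{1}{q-2}\sum_{\chi\ne\chi_0}\left|\sum_{p\le y}\frac{a_p\chi(p^\ell)}{p^\sigma}\right|^{2k}
\le \sum_{\substack{P=p_1\cdots p_k\\ Q=q_1\cdots q_k\\ P=Q}}
\frac{a_{p_1}\cdots a_{p_k}\overline{a_{q_1}}\cdots\overline{a_{q_k}}}{(PQ)^\sigma}
=\sum_P\frac{1}{P^{2\sigma}}\Bigl|\sum_{p_1\cdots p_k=P,\ p_i\le y}a_{p_1}\cdots a_{p_k}\Bigr|^2,
\]
where the negative off-diagonal contributions were discarded (they only help the upper bound). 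Finally, by the multinomial expansion of $\bigl(\sum_{p\le y}|a_p|^2/p^{2\sigma}\bigr)^k$ one sees the last expression is at most $\bigl(\sum_{p\le y}|a_p|^2p^{-2\sigma}\bigr)^k$ — indeed grouping by the value $P$ and using $|{\sum}a_{p_1}\cdots a_{p_k}|^2\le (\text{multiplicity})\sum|a_{p_1}|^2\cdots|a_{p_k}|^2$ via Cauchy–Schwarz, or more simply noting $\sum_P P^{-2\sigma}|\sum_{p_1\cdots p_k=P}a_{p_1}\cdots a_{p_k}|^2\le\sum_{p_1,\dots,p_k}\frac{|a_{p_1}|^2\cdots|a_{p_k}|^2}{(p_1\cdots p_k)^{2\sigma}}=\bigl(\sum_p|a_p|^2p^{-2\sigma}\bigr)^k$ — which yields the claim, the strictness coming from dropping the (negative) $-1$ off-diagonal terms, at least one of which is present once $y>2$.

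The main obstacle I anticipate is the bookkeeping when $\ell=2$: then the character sum involves $\chi$ evaluated at $(p_1\cdots p_k)^2$, whose argument $P^2$ can be as large as $q^2$, so the naive "$P^2\equiv Q^2\pmod q\Rightarrow P^2=Q^2$" fails and one must genuinely use the orthogonality sum $\sum_{\chi\ne\chi_0}\chi(P^2)\overline\chi(Q^2)\in\{q-2,-1\}$, retaining only the terms with $P^2\equiv Q^2$, and then verify that among those the diagonal $P=Q$ dominates after the negative off-diagonal terms are discarded. This requires checking that the positive off-diagonal solutions $P\equiv -Q\pmod q$ with $P,Q\le q^{1/k\cdot k}=q$ — hence $P+Q=q$ — are rare enough (or simply arguing that the corresponding coefficient products, being still of the form $a\overline b$, are absorbed by enlarging to the full square $(\sum|a_p|^2 p^{-2\sigma})^k$ via an extra Cauchy–Schwarz). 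Getting the constant to be exactly $1$ with a strict inequality, rather than $2$, is the delicate point, and I would handle it by being careful that every discarded off-diagonal term from $\sum_{\chi\ne\chi_0}$ is exactly $-1$ (hence helps) and that the diagonal-only bound already collapses to $\bigl(\sum_p|a_p|^2p^{-2\sigma}\bigr)^k$ without slack beyond what strictness needs.
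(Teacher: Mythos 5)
Your overall route---expand the $2k$-th power, apply orthogonality over the non-principal characters, and keep only the diagonal---is exactly the paper's, but two of your steps do not hold as written. First, discarding the off-diagonal terms is not justified by the fact that $\sum_{\chi\ne\chi_0}\chi(m)\overline{\chi}(n)=-1$ off the diagonal: that $-1$ multiplies the complex coefficient $b_m\overline{b_n}(mn)^{-\sigma}$ (where $b_n=\sum_{p_1\cdots p_k=n,\,p_i\le y}a_{p_1}\cdots a_{p_k}$), so the quantity you throw away is $-\frac{1}{q-2}\sum_{m\ne n}b_m\overline{b_n}(mn)^{-\sigma}=\frac{1}{q-2}\bigl(\sum_n|b_n|^2n^{-2\sigma}-\bigl|\sum_n b_n n^{-\sigma}\bigr|^2\bigr)$, which is \emph{positive} whenever the $b_n$ exhibit cancellation (e.g.\ $k=1$, $a_2=1$, $a_3=-1$), so dropping it is not an upper bound. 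The safe bookkeeping, which is what the paper does, is to use full orthogonality and subtract the principal character, getting $\frac{q-1}{q-2}\sum_n|b_n|^2n^{-2\sigma}-\frac{1}{q-2}\bigl|\sum_n b_n\chi_0(n)n^{-\sigma}\bigr|^2$ and discarding only the second term, which is a genuine square. Second, your closing inequality $\sum_P P^{-2\sigma}\bigl|\sum_{p_1\cdots p_k=P}a_{p_1}\cdots a_{p_k}\bigr|^2\le\bigl(\sum_p|a_p|^2p^{-2\sigma}\bigr)^k$ is false for $k\ge 2$: the multinomial multiplicity enters squared on the left but only once on the right. For $k=2$, $\sigma=0$, $a_2=a_3=1$ the left side is $1+4+1=6$ while the right side is $4$. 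The Cauchy--Schwarz variant you mention produces exactly that multiplicity and yields only $k!\bigl(\sum_p|a_p|^2p^{-2\sigma}\bigr)^k$; the constant-one form is an identity precisely when $k=1$, which is the only case the paper uses downstream, so this gap cannot be closed in the generality you claim.

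Third, you explicitly leave the $\ell=2$ case open, and the worry is genuine: under the sole hypothesis $y\le q^{1/k}$ the congruence $P^2\equiv Q^2\pmod q$ admits off-diagonal solutions $P+Q=q$, and these come weighted by $+(q-1)$ from orthogonality, not $-1$, so they cannot be discarded; one needs either $y^k<q/2$ (which does hold where the lemma is applied in the paper, since there $y^k<q^{1/2}$) or an extra step such as $|b_P\overline{b_Q}|(PQ)^{-\sigma}\le\frac12\bigl(|b_P|^2P^{-2\sigma}+|b_Q|^2Q^{-2\sigma}\bigr)$, which costs a bounded factor. In summary, your strategy coincides with the paper's, but as written the argument does not close: the off-diagonal discard and the $\ell=2$ terms need the repairs indicated above, and the diagonal-to-product step is simply not available with constant $1$ for $k\ge 2$.
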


\begin{proof}
    If we write
    \[
    \left(\sum_{p\leq y}\frac{a_p\chi(p)}{p^\sigma}\right)^{k}=\sum_{n\leq y^k}\frac{b_n\chi(n)}{n^\sigma},
    \]
    then from the orthogonality relation
    \begin{equation}\label{orthogonality}
        \sum_{\chi\bmod q}\chi(n)=
        \begin{cases}
            q-1 & \text{if $n\equiv 1\bmod q$},\\
            0 & \text{otherwise}
        \end{cases}
    \end{equation}
    and the restriction $y^k\leq q$, we deduce that
    \begin{align*}
        \frac{1}{q-2}\sum_{\chi\neq \chi_0}\left|\sum_{p\leq y}\frac{a_p\chi(p)}{p^\sigma}\right|^{2k}=&\frac{1}{q-2}\sum_{2\leq m,n\leq y^k} \frac{b_m \ov{b_n}}{(mn)^{\sigma}}\sum_{\chi\neq \chi_0}\chi(m)\ov{\chi(n)}\\
        =&\frac{q-1}{q-2}\sum_{2\leq n\leq y^k} \frac{|b_n|^2}{n^{2\sigma}}-\frac{1}{q-2}\left|\sum_{2\leq n\leq y^k}\frac{b_n\chi_0(n)}{n^\sigma}\right|^2 \\
        \leq & (1+o(1))\sum_{2\leq n\leq y^k} \frac{|b_n|^2}{n^{2\sigma}}.
    \end{align*}
    Since $b_n=\sum_{p_1\ldots p_k=n}a_{p_1}\ldots a_{p_k}$,
    by the Cauchy--Schwarz inequality,
    \[
    \frac{|b_n|^2}{n^{2\sigma}}\leq \sum_{p_1\ldots p_k=n} \frac{|a_{p_1}\ldots a_{p_n}|^2 }{(p_1\ldots p_k)^{2\sigma}}P(p_1,\ldots,p_k)
    \]
    where $P(p_1,\ldots,p_k)$ denotes the number of permutations of the prime tuple, which is at most $k!$. The desired estimate for $\ell=1$ follows immediately, and the case for a general $\ell$ can be treated in a similar way.
\end{proof}

In the next two lemmas we exploit Theorem~\ref{theorem zero density} to derive mean value estimates involving the quantity $\sigma_{t,\chi}$ defined in \eqref{def sigma_chi}. 

\begin{lemma}\label{lemma 17 v2}
    Let $0<\eps \le 1/4$, $0<\kappa<\eps/2$ and $|t|\le q^{\frac{1}{4}-\eps}$. Further, let $v,r\in \bb{R}_{\geq 0}$, $0<\delta< 2/(2r+3)$ and set $x=q^{\delta\kappa}$. Then for all $q\ge q_0(\kappa)$,
    $$\frac{1}{q-2}\sum_{\chi\ne \chi_0} \left(\sigma_{t,\chi}-\frac{1}{2}\right)^v x^{r\left(\sigma_{t,\chi}-\frac{1}{2}\right)}\le   \frac{C(\eta,\delta,r,v)}{(\log x)^v},$$
    where $\eta$ is the parameter appearing in the definition of $\sigma_{t,\chi}$ in \eqref{def sigma_chi}, and
    \begin{align*}
        C(\eta,\delta,r,v):=(2\eta)^ve^{2\eta r}+&2^{v+1}\left(4.79+\frac{4.12}{4e^{3\eta}/\delta-1.73}\right)\\
        \cdot& \left[\frac{\eta^v e^{(2r+3-2/\delta)\eta}}{\delta}+\frac{v\Gamma(v,\eta(2/\delta-2r-3))}{\delta(2/\delta-2r-3)^v}+\frac{2r\Gamma(v+1,\eta(2/\delta-2r-3))}{\delta(2/\delta-2r-3)^{v+1}}\right].
    \end{align*}
    Here $\Gamma(v,u)=\int_u^\infty z^{v-1}e^{-z}\md z$ is the incomplete Gamma function.
\end{lemma}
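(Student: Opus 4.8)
The plan is to split the average over $\chi$ according to whether $\sigma_{t,\chi}$ attains its minimal value. Write $x=q^{\delta\kappa}$ and $c=\eta/\log x$; by \eqref{def sigma_chi} one always has $\sigma_{t,\chi}-\tfrac12\ge 2c$, with equality precisely when $L(s,\chi)$ has no zero $\rho=\beta+i\gamma$ with $\beta-\tfrac12>c$ and $|t-\gamma|\le x^{3(\beta-1/2)}/\log x$. The characters realizing equality contribute at most $(q-2)(2c)^v x^{2rc}=(q-2)(2\eta/\log x)^v e^{2\eta r}$, which after dividing by $q-2$ yields the first term $(2\eta)^v e^{2\eta r}/(\log x)^v$ of the claimed bound. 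For every other $\chi$ there is a zero $\rho_\chi=\beta_\chi+i\gamma_\chi$ attaining the maximum in \eqref{def sigma_chi}, so that $\sigma_{t,\chi}-\tfrac12=2(\beta_\chi-\tfrac12)$ with $\beta_\chi-\tfrac12>c$ and $|\gamma_\chi-t|\le x^{3(\beta_\chi-1/2)}/\log x$. Passing to the variable $z_\rho:=(\beta-\tfrac12)\log x$ and setting $\Phi(z):=(2z)^v e^{2rz}$, the contribution of such a $\chi$ to the sum is $(\log x)^{-v}\Phi(z_{\rho_\chi})$.

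Since $\Phi$ is nondecreasing and $z_{\rho_\chi}$ is the largest $z_\rho$ among zeros of $L(s,\chi)$ lying in their admissible windows, I would bound the total over the remaining $\chi$ by
\[
\Sigma:=\sum_{\chi\ne\chi_0}\ \sum_{\substack{\rho:\ z_\rho>\eta\\ |\gamma-t|\le e^{3z_\rho}/\log x}}\Phi(z_\rho),
\]
where $x^{3(\beta-1/2)}=e^{3z_\rho}$. The heart of the proof is a layer-cake (Abel) decomposition of $\Sigma$ in $z$: writing $\Phi(z_\rho)=\Phi(\eta)+\int_\eta^{z_\rho}\Phi'(z)\,\md z$ and interchanging sum and integral gives $\Sigma=\Phi(\eta)\,T(\eta)+\int_\eta^\infty\Phi'(z)\,T(z)\,\md z$, where $T(z)$ counts, over all $\chi$, the zeros that are still ``active'' once the window has reached height $z$. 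The decisive step is to estimate $T(z)$ from Theorem~\ref{theorem zero density} \emph{with the imaginary window tied to the real part of the zeros}, i.e.\ by applying it at $\sigma=\tfrac12+z/\log x$ and $t_2-t_1=2e^{3z}/\log x$. Since $\log x=\delta\kappa\log q$, a short computation turns that theorem into
\[
T(z)<\frac{2}{\delta}\Big(4.79+\frac{4.12}{4e^{3\eta}/\delta-1.73}\Big)\,q\,e^{(3-2/\delta)z}\qquad(z\ge\eta),
\]
where $e^{3z}\ge e^{3\eta}$ freezes the second term in the bracket and, crucially, $3-2/\delta<0$ precisely because $\delta<2/(2r+3)\le 2/3$; this decay is what makes the $z$-integral converge.

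Substituting this bound, dividing by $q-2$, and integrating by parts once against $e^{-Bz}$ with $B:=2/\delta-3>0$ turns $\Phi(\eta)e^{-B\eta}+\int_\eta^\infty\Phi'(z)e^{-Bz}\,\md z$ into $B\int_\eta^\infty\Phi(z)e^{-Bz}\,\md z$ plus a boundary term from the endpoint $z=\tfrac12\log x$ (zeros with $\beta$ near $1$). That boundary term is a fixed negative power $q^{-\kappa(1-\delta(2r+3)/2)}$ of $q$ times $(\log x)^v$ — so $o(1)$ as $q\to\infty$, again because $\delta<2/(2r+3)$ — hence for $q\ge q_0(\kappa)$ it is covered by the slack in the statement. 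Finally, with $A:=2/\delta-2r-3$, the identities $\int_\eta^\infty z^v e^{-Az}\,\md z=\Gamma(v+1,A\eta)/A^{v+1}$ and $\eta^v e^{-A\eta}+v\int_\eta^\infty z^{v-1}e^{-Az}\,\md z=A\int_\eta^\infty z^v e^{-Az}\,\md z$ give $B\int_\eta^\infty\Phi(z)e^{-Bz}\,\md z=2^v(A+2r)\,\Gamma(v+1,A\eta)/A^{v+1}$, which equals $2^v$ times the bracketed factor in $C(\eta,\delta,r,v)$; multiplying by $\tfrac{2}{\delta}\big(4.79+\tfrac{4.12}{4e^{3\eta}/\delta-1.73}\big)$ and adding back the Case-1 term reproduces $C(\eta,\delta,r,v)/(\log x)^v$.

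The step I expect to be the main obstacle is the bookkeeping buried in the definition of $T(z)$. A zero whose ordinate $\gamma$ lies \emph{far} from $t$ cannot be charged to the lowest level $\eta$: it only becomes ``active'' once the window $2e^{3z}/\log x$ exceeds $|\gamma-t|$, i.e.\ at level $z\approx\tfrac13\log(|\gamma-t|\log x)$, and one must check that this reassignment is compatible with exactly one application of Theorem~\ref{theorem zero density} at each level, with the window kept tied to $z$ (hence to $\beta-\tfrac12$) throughout — using instead the maximal window $x^{3/2}/\log x$ would replace the decaying factor $e^{(3-2/\delta)z}$ by a useless $q^{3\delta\kappa/2}$. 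The remaining checks are routine: for $q\ge q_0(\kappa)$ one has $\sigma=\tfrac12+z/\log x\ge\tfrac12+\tfrac{5}{8\kappa\log q}$ since $z\ge\eta\ge1>5\delta/8$, one has $t_2-t_1=2e^{3z}/\log x\ge 1.73/(\kappa\log q)$ since $e^{3z}\ge e^3$, and the windows are $q^{o(1)}$, so $|t_1|,|t_2|$ remain in the admissible range (if necessary one applies Theorem~\ref{theorem zero density} with a slightly smaller value of its own $\eps$, which is harmless since $\kappa<\eps/2$ leaves room).
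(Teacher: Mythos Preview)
Your outline is essentially the paper's proof: isolate the $\chi$ for which $\sigma_{t,\chi}$ sits at its floor $\tfrac12+2\eta/\log x$, dominate the remaining ones by the full sum over admissible zeros, layer-cake in $z=(\beta-\tfrac12)\log x$, and invoke Theorem~\ref{theorem zero density} level by level with window $2e^{3z}/\log x$. Your subsequent arithmetic --- the $e^{(3-2/\delta)z}$ decay, the integration by parts to $B\int_\eta^\infty\Phi(z)e^{-Bz}\,\md z$, the negligible boundary at $z=\tfrac12\log x$, and the incomplete-Gamma identities --- is correct and reproduces the paper's constant exactly.

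The obstacle you single out is real, and your sketch does not close it. The layer-cake identity $\Sigma=\Phi(\eta)T(\eta)+\int_\eta^\infty\Phi'(z)T(z)\,\md z$ is valid only when $T(z)$ counts zeros with $|\gamma-t|\le\Delta(\beta)$, i.e.\ with the window tied to the zero's own real part; but Theorem~\ref{theorem zero density} applied with $t_2-t_1=2e^{3z}/\log x$ bounds the strictly \emph{smaller} count with $|\gamma-t|\le\Delta(\tfrac12+z/\log x)$, since $\Delta$ is increasing. Substituting the latter into the former therefore underestimates $\Sigma$ rather than overestimates it --- your ``reassignment'' of a far zero to its activation level $z^*$ is exactly this substitution, and it silently drops a positive amount $\Phi(z^*)$ from each such zero's contribution. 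The paper handles the step the same way: it asserts that the zero sum \emph{equals} the Stieltjes integral $-\int_{\sigma_0}^1 g(\sigma)\,\md N(\sigma;t-\Delta(\sigma),t+\Delta(\sigma);\chi)$ and then integrates by parts to $A_\chi+B_\chi$; but a zero with $|\gamma-t|>\Delta(\sigma_0)$ makes this $N$ jump \emph{up} at $\sigma=\sigma^*$ before it jumps down at $\sigma=\beta$, so after integration by parts it contributes $g(\beta)-g(\sigma^*)$ rather than $g(\beta)$. So you have correctly located the one non-routine step; the paper as written does not supply an independent justification for it either.
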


\begin{proof}
Let us denote 
$$\sigma_0=\frac{1}{2}+\frac{\eta}{\log x}\qquad \text{and}\qquad \Delta(\beta) = \frac{x^{3(\beta-1/2)}}{\log x}.$$
Recalling the definition of $\sigma_{t,\chi}$ in \eqref{def sigma_chi}, we have
\begin{equation*}
    \sigma_{t,\chi} = \frac{1}{2} + 
    2 \max_{\substack{\beta\ge \frac{1}{2} \\ |t-\gamma|\leq \Delta(\beta)}}\left(\beta-\frac{1}{2},\sigma_0-\frac{1}{2}\right).
\end{equation*}
Considering the cases $\beta \le \sigma_0$ and $\beta>\sigma_0$ gives the formula
\begin{equation*}
    \sigma_{t,\chi} = \frac{1}{2}+ 2\cdot 
    \begin{cases}
        \displaystyle
        \frac{\eta}{\log x} & \text{if $\beta\le \sigma_0$ throughout;}\\
        \displaystyle
        \max_{\substack{\beta > \sigma_0 \\ |t-\gamma|\leq \Delta(\beta)}}\left(\beta-\frac{1}{2}\right) & \text{otherwise},
    \end{cases}
\end{equation*}
which implies that
\begin{equation}\label{selberg lemma 17 v2 s1}
    \left(\sigma_{t,\chi}-\frac{1}{2}\right)^v x^{r\left(\sigma_{t,\chi}-\frac{1}{2}\right)} \le  \frac{(2\eta)^v e^{2\eta r}}{(\log x)^v} +2^v\sum_{\substack{\beta >\sigma_0\\ |\gamma-t|\le \Delta(\beta)}} \left(\beta-\frac{1}{2}\right)^v x^{2r\left(\beta-\frac{1}{2}\right)}.
\end{equation}
The above sum over zeros can be written as a Stieltjes integral and equals, via integration by parts,
\[
-\int_{\sigma_0}^1 \left(\sigma-\frac{1}{2}\right)^v x^{2r(\sigma-1/2)} \md N(\sigma;t-\Delta(\sigma),t+\Delta(\sigma); \chi)=A_\chi+B_\chi
\]
where
\[
A_{\chi}:=\left(\sigma_0-\frac{1}{2}\right)^v x^{2r(\sigma_0-1/2)}N(\sigma_0;t-\Delta(\sigma_0),t+\Delta(\sigma_0); \chi)
\]
and
\[
B_{\chi}:=\int_{\sigma_0}^1 \left(v\left(\sigma-\frac{1}{2}\right)^{v-1}+ \left(\sigma-\frac{1}{2}\right)^v  2r\log x\right) x^{2r(\sigma-1/2)} N(\sigma;t-\Delta(\sigma),t+\Delta(\sigma); \chi)\md \sigma.
\]
We shall sum over $\chi\ne \chi_0$ and use the zero-density inequality furnished by Theorem~\ref{theorem zero density} to bound the resulting expressions. In preparation,
note that $x< q^{\kappa}$ and $\eta\ge 1$, and hence for all $\sigma\geq \sigma_0$,
$$\sigma\ge \frac{1}{2}+\frac{\eta}{\log x} > \frac{1}{2}+ \frac{5}{8\kappa \log q} \qquad \text{and}\qquad  \Delta(\sigma)\ge \Delta(\sigma_0)= \frac{e^{3\eta}}{\log x}>  \frac{1}{2}\cdot \frac{1.73}{\kappa \log q}.$$
This verifies that the requisite conditions to apply Theorem~\ref{theorem zero density} are fulfilled. Moreover, to simplify the right-hand side in Theorem~\ref{theorem zero density}, let us denote
\begin{equation}\label{f(kappa, delta)}
    f(\eta, \kappa,\delta) := 2\kappa \left(4.79 + \frac{4.12}{4 e^{3\eta}/\delta-1.73} \right).
\end{equation}
Then for $\sigma\geq \sigma_0$, Theorem~\ref{theorem zero density} gives
\begin{align*}
   \frac{1}{q-2}\sum_{\chi\ne \chi_0} N(\sigma; t-\Delta(\sigma),t+\Delta(\sigma); \chi) \le& f(\eta, \kappa,\delta) \log q\cdot \Delta(\sigma) q^{-2\kappa (\sigma-1/2)}\\
   \leq& f(\eta, \kappa,\delta) \frac{\log q}{\log x}\left(\frac{x^3}{q^{2\kappa}}\right)^{\sigma-1/2}\\
   =&\frac{f(\eta, \kappa,\delta)}{\delta\kappa}x^{(3-2/\delta)(\sigma-1/2)}.
\end{align*}
From this we obtain
\begin{equation}\label{lemma 17 A_chi}
    \frac{1}{q-2}\sum_{\chi\neq \chi_0}A_\chi\leq \frac{f(\eta, \kappa,\delta)}{\delta\kappa}\frac{\eta^v e^{2\eta r}}{(\log x)^v}e^{\eta(3-2/\delta)}
\end{equation}
and
\begin{equation}\label{lemma 17 B_chi}
    \begin{split}
        \frac{1}{q-2}\sum_{\chi\neq \chi_0}B_\chi\leq  \frac{vf(\eta, \kappa,\delta)}{\delta\kappa}\int_{\sigma_0}^1 &\left(\sigma-\frac{1}{2}\right)^{v-1}x^{(2r+3-2/\delta)(\sigma-1/2)}\md \sigma\\
        +& \frac{f(\eta, \kappa,\delta)}{\delta\kappa}2r\log x \int_{\sigma_0}^1 \left(\sigma-\frac{1}{2}\right)^v x^{(2r+3-2/\delta)(\sigma-1/2)}\md \sigma.
    \end{split}
\end{equation}
A change of variable $\sigma-1/2\mapsto \alpha$ transforms the first integral in \eqref{lemma 17 B_chi} into
\begin{align*}
    \int_{\sigma_0-1/2}^{1/2} \alpha^{v-1} x^{(2r+3-2/\delta)\alpha}\md \alpha<\int_{\sigma_0-1/2}^{\infty} \alpha^{v-1} x^{(2r+3-2/\delta)\alpha}\md \alpha.
\end{align*}
After applying another change of variable $\log x\cdot (2/\delta-2r-3)\alpha\mapsto z$ and recalling the definition of the incomplete Gamma function, we see that the last integral equals
\[
\frac{\Gamma(v,\eta(2/\delta-2r-3))}{(\log  x)^v (2/\delta-2r-3)^v}.
\]
Here we used our assumption that $(2r+3)\delta<2$. Similarly, the second integral in \eqref{lemma 17 B_chi} is at most
\[
\frac{\Gamma(v+1,\eta(2/\delta-2r-3))}{(\log  x)^{v+1}(2/\delta-2r-3)^{v+1}}.
\]
Combining these estimates and substituting \eqref{lemma 17 A_chi} and \eqref{lemma 17 B_chi} back into \eqref{selberg lemma 17 v2 s1} yields the result in the lemma.
\end{proof}

When $\Re(s)>1$ we have the Dirichlet series representation
\[
\log L(s,\chi)=\sum_{n=1}^\infty \frac{\Lambda(n)\chi(n)}{n^s \log n}.
\]
This leads us to the heuristic 
\[
S(t,\chi)=\frac{1}{\pi}\Im \log L\left(\frac{1}{2}+it,\chi\right)\approx \frac{1}{\pi}\Im \sum_{p<X}\frac{\chi(p)}{p^{1/2+it}}
\]
for large $X$. We make this more precise in the following result by providing a mean value estimate for even moments of the error term in this approximation.

\begin{proposition}[c.f. {\cite[Theorem 7]{Sel46}}]\label{prop: selberg theorem 7}
    Let $0<\eps \le 1/4$, $0<\kappa<\eps/2$, and $|t|\le q^{\frac{1}{4}-\eps}$. Further, let $k\in \bb{N}_{\geq 1}$, $0<\delta<2/(8k+3)$, and $x=q^{\delta\kappa}$. Then for $q\geq q_0(\delta,\kappa,k)$,
    \begin{equation*}
        \frac{1}{q-2}\sum_{\chi\neq \chi_0} \left|S(t,\chi)-\frac{1}{\pi}\Im \sum_{p<x^3}\frac{\chi(p)}{p^{1/2+it}}\right|^{2k} < D(\eta,\delta,\kappa,k,\eps),
    \end{equation*}
    where $\eta\geq 1$, 
    \begin{align*}
        D(\eta,\delta,\kappa,k,\eps):=&
        \frac{1}{\pi^{2k}} \Bigg(\left(B_1(\eta)+e^{-2\eta}\right) C(\eta,\delta,4k,4k)^{\frac{1}{4k}}((2k)!)^{\frac{1}{4k}} \sqrt{h(k)}+\frac{3}{2}(1+B_1(\eta))C(\eta,\delta,0,2k)^{\frac{1}{2k}}\\
        &\hspace{1cm}+B_2(\eta)C(\eta,\delta,0,2k)^{\frac{1}{2k}}\frac{5/4-\eps}{\delta \kappa}+(k!)^{\frac{1}{2k}}\sqrt{0.12}+0.53+(k!)^{\frac{1}{2k}}\sqrt{0.58}\Bigg)^{2k},
    \end{align*}
    \begin{equation}\label{def h(k)}
        h(k):=\min_{\Delta\geq 0}\left[(1-e^{-\Delta})8.68^{2k}+e^{-\Delta}\left(\frac{\left(2\Delta^{2}+4\Delta+3\right)e^{6\Delta}-192\Delta^{3}-80\Delta^{2}-22\Delta-3}{8\Delta^{4}e^{6\Delta}}\right)^{2k}\right]^{\frac{1}{2k}},
    \end{equation}
    $B_1$ and $B_2$ were defined in Proposition~\ref{prop: approx S(t,chi)}, and $C$ was defined in Lemmas~\ref{lemma 17 v2}.
\end{proposition}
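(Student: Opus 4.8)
The plan is to build the approximation in stages, measuring each discrepancy in the $L^{2k}$ sense with respect to the uniform probability measure on the $q-2$ non-principal characters modulo $q$ --- write $\|\cdot\|_{2k}$ for that norm --- and then to recombine by Minkowski's inequality; it suffices to bound $\|S(t,\chi)-\tfrac1\pi\Im\sum_{p<x^3}\chi(p)p^{-1/2-it}\|_{2k}$ by $\pi^{-1}$ times the parenthesized expression in the definition of $D$. By Proposition~\ref{prop: approx S(t,chi)}, $S(t,\chi)$ differs from $\tfrac1\pi\Im\sum_{n<x^3}\frac{\Lambda_x(n)\chi(n)}{n^{\tilde{s}_{t,\chi}}\log n}$ by at most $\tfrac1\pi(\sigma_{t,\chi}-\tfrac12)\big(B_1(\eta)|r(x,t)|+B_2(\eta)\log q(|t|+1)+O(1)\big)$, and by \eqref{def Lambda_x} this Dirichlet polynomial breaks into the contribution of the primes $p<x$ (coefficient $1$), of the primes $x\le p<x^3$ (coefficient $\Lambda_x(p)/\log p\in[0,1]$), and of the proper prime powers $p^j$, $j\ge 2$. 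Subtracting $\tfrac1\pi\Im\sum_{p<x^3}\chi(p)p^{-1/2-it}$ term by term, Minkowski's inequality reduces the task to estimating the $L^{2k}$-norm of: (i) the Proposition~\ref{prop: approx S(t,chi)} remainder; (ii) $\tfrac1\pi\Im\sum_{p<x}\chi(p)p^{-1/2-it}\big(p^{-(\sigma_{t,\chi}-1/2)}-1\big)$; (iii) $\tfrac1\pi\Im\sum_{x\le p<x^3}\chi(p)p^{-1/2-it}\big(\tfrac{\Lambda_x(p)}{\log p}p^{-(\sigma_{t,\chi}-1/2)}-1\big)$; and (iv) $\tfrac1\pi\Im\sum_{j\ge 2}\sum_{p^j<x^3}\tfrac{\Lambda_x(p^j)\chi(p^j)}{j\log p}\,p^{-j\tilde{s}_{t,\chi}}$.

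For (i) I would apply Minkowski once more to peel off the piece carrying $|r(x,t)|$, the piece carrying $\log q(|t|+1)$, and the $O(1)$ piece. The two pieces not involving $r$ reduce to $\|\sigma_{t,\chi}-\tfrac12\|_{2k}$, which Lemma~\ref{lemma 17 v2} (case $v=2k$, $r=0$) bounds by $C(\eta,\delta,0,2k)^{1/2k}/\log x=C(\eta,\delta,0,2k)^{1/2k}/(\delta\kappa\log q)$; combined with $\log q(|t|+1)\le(\tfrac54-\eps)\log q$ (from $|t|\le q^{1/4-\eps}$) this produces the summands $B_2(\eta)C(\eta,\delta,0,2k)^{1/2k}\tfrac{5/4-\eps}{\delta\kappa}$ and $\tfrac32(1+B_1(\eta))C(\eta,\delta,0,2k)^{1/2k}$ of $D$, the explicit constants in the latter coming from tracking through \eqref{L'/L} the $O(1)$ of Proposition~\ref{prop: approx S(t,chi)}. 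For the piece carrying $|r(x,t)|$ I would insert the weights $x^{\pm(\sigma_{t,\chi}-1/2)}$ and use the Cauchy--Schwarz inequality
\[
\big\|(\sigma_{t,\chi}-\tfrac12)\,|r(x,t)|\big\|_{2k}\le\big\|(\sigma_{t,\chi}-\tfrac12)\,x^{\sigma_{t,\chi}-1/2}\big\|_{4k}\cdot\big\|r(x,t)\,x^{-(\sigma_{t,\chi}-1/2)}\big\|_{4k},
\]
bounding the first factor by $C(\eta,\delta,4k,4k)^{1/4k}/\log x$ via Lemma~\ref{lemma 17 v2} (case $v=r=4k$). What then remains is the core estimate
\[
\frac1{\log x}\bigg(\frac1{q-2}\sum_{\chi\neq\chi_0}|r(x,t)|^{4k}\,x^{-4k(\sigma_{t,\chi}-1/2)}\bigg)^{1/4k}\le\sqrt{h(k)},
\]
which accounts for the summand $(B_1(\eta)+e^{-2\eta})C(\eta,\delta,4k,4k)^{1/4k}\sqrt{h(k)}$, the extra $e^{-2\eta}$ absorbing the $J_1$-to-Dirichlet-polynomial step in Proposition~\ref{prop: approx S(t,chi)} (its factor $2/(e^\eta\log x)$), which is of the same shape.

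The core estimate, and likewise (ii), (iii), (iv), I would handle by one device: fix a threshold $\Delta\ge 0$ and split the characters into those with $\sigma_{t,\chi}-\tfrac12$ small (``good'') and the rest (``bad''). On the good set $r(x,t)$, the shift sums in (ii)--(iii), and the prime-power sum in (iv) are --- up to a harmless dyadic decomposition in $\sigma_{t,\chi}$ that pins the abscissa --- fixed Dirichlet polynomials in $\chi$, so the orthogonality Lemma~\ref{lemma square prime sum} applies to the prime, smoothed-prime, $p^2$ and $p^{\ge 3}$ parts separately. For $r(x,t)$ this, via Mertens-type evaluations such as $\sum_{p<x}(\log p)^2 p^{-2\sigma_{t,\chi}}\asymp(\log x)^2$ with the prime part dominant, contributes the first constant appearing in \eqref{def h(k)}; for (iv) the $p^2$-part (through $\ell=2$, with $4\sigma_{t,\chi}>2$) gives $\sum_p\big(\tfrac{\Lambda_x(p^2)}{2\log p}\big)^2 p^{-4\sigma_{t,\chi}}\le\tfrac14\sum_p p^{-2}<0.12$, while the $p^{\ge 3}$-part is bounded deterministically ($\sigma_{t,\chi}>\tfrac12$) by $\sum_{j\ge 3}\sum_p\tfrac1j p^{-j/2}<0.53$; and (ii)--(iii) reduce, after writing $1-p^{-(\sigma_{t,\chi}-1/2)}$ as an integral and using Mertens, to a convergent integral contributing the $\sqrt{0.58}$. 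On the bad set $L(s,\chi)$ must have a zero $\beta+i\gamma$ with $\beta-\tfrac12$ sizable and $|t-\gamma|\le x^{3(\beta-1/2)}/\log x$; Theorem~\ref{theorem zero density} bounds its measure by $e^{-\Delta}$ --- this is where the strong negativity of the exponent $3-2/\delta$ (guaranteed by $\delta<2/(8k+3)$) and the effectiveness of the zero-density estimate for small $t$ enter --- and a further dyadic decomposition in $\sigma_{t,\chi}-\tfrac12$ (again reducing to fixed abscissae, where $\sum_{p<x}(\log p)^2 p^{-1-\epsilon}\asymp\epsilon^{-2}$ dominates the weight $x^{-(\sigma_{t,\chi}-1/2)}$) produces the rational-times-exponential expression in \eqref{def h(k)}; the bad-set contributions of (ii)--(iv) are $o(1)$ and absorbed into the slack. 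Optimizing over $\Delta$ yields $h(k)$, and collecting all pieces by Minkowski and raising to the $2k$-th power gives $D(\eta,\delta,\kappa,k,\eps)$.

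The main obstacle is the core estimate for $\|r(x,t)\,x^{-(\sigma_{t,\chi}-1/2)}\|_{4k}$: because the evaluation abscissa $\sigma_{t,\chi}$ of $r(x,t)$ depends on $\chi$, the orthogonality Lemma~\ref{lemma square prime sum} is not directly available, and the naive pointwise bound on $r(x,t)$ where $\sigma_{t,\chi}$ is far from $1/2$ is enormous --- of size $\asymp x^{3/2}$ --- so one cannot afford to discard the cancellation. Both the dyadic reduction to fixed abscissae and the zero-density control of the exceptional set's measure are therefore essential, and the latter succeeds only because $\delta$ is taken small enough that Theorem~\ref{theorem zero density} exhibits a genuine power-of-$q$ saving. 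The rest of the argument --- making explicit the $O(1)$ of Proposition~\ref{prop: approx S(t,chi)}, verifying the $o(1)$ bad-set contributions of (ii)--(iv), and tracking the dependence of $q_0$ on $\delta,\kappa,k$ --- is routine but lengthy.
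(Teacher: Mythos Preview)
Your high-level plan---Minkowski decomposition, Lemma~\ref{lemma 17 v2} for moments of $\sigma_{t,\chi}-\tfrac12$, Lemma~\ref{lemma square prime sum} for the prime sums---matches the paper, but your treatment of the core estimate for $r(x,t)$ diverges substantively. You propose to tame the $\chi$-dependent abscissa by splitting characters into good and bad sets according to the size of $\sigma_{t,\chi}$, dyadically pinning the abscissa on each, and controlling the bad set's measure via Theorem~\ref{theorem zero density}. The paper instead bypasses this entirely with an integral representation: one writes
\[
\sum_{p<x^3}\frac{\Lambda_x(p)\chi(p)}{p^{\sigma_{t,\chi}+it}}=x^{\sigma_{t,\chi}-1/2}\int_{\sigma_{t,\chi}}^\infty x^{1/2-\sigma}\sum_{p<x^3}\frac{\Lambda_x(p)\log(xp)\chi(p)}{p^{\sigma+it}}\md\sigma
\]
and extends the integral down to $\sigma=1/2$. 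This factors the $\chi$-dependence cleanly: the scalar $x^{\sigma_{t,\chi}-1/2}$ joins $(\sigma_{t,\chi}-\tfrac12)(1+B_1(\eta)x^{\sigma_{t,\chi}-1/2})$ to form one side of a Cauchy--Schwarz (handled by Lemma~\ref{lemma 17 v2} with $r=v=4k$), while the integrand is a Dirichlet polynomial whose abscissa $\sigma$ is \emph{independent of $\chi$}, so after H\"older in the $\sigma$-integral, Lemma~\ref{lemma square prime sum} applies directly at each fixed $\sigma$.

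The function $h(k)$ then arises from splitting this $\sigma$-integral at $1/2+\Delta/\log x$, not from splitting the character set: the weights $(1-e^{-\Delta})$ and $e^{-\Delta}$ are simply $\log x\cdot\int_{1/2}^{1/2+\Delta/\log x}x^{1/2-\sigma}\md\sigma$ and $\log x\cdot\int_{1/2+\Delta/\log x}^\infty x^{1/2-\sigma}\md\sigma$, and the constant $8.68$ and the rational expression in $\Delta$ are $(\log x)^{-4}$ times $\int_1^{x^3}\Lambda_x(y)^2\log(xy)^2(y\log y)^{-1}\md y$ and $\int_1^{x^3}\Lambda_x(y)^2\log(xy)^2(y^{1+2\Delta/\log x}\log y)^{-1}\md y$ respectively. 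Your reading of $e^{-\Delta}$ as the bad-set measure and of the two constants as good/bad contributions is therefore off; the zero-density Theorem~\ref{theorem zero density} enters only through Lemma~\ref{lemma 17 v2}, not through an additional character partition. Relatedly, the summand $\tfrac32(1+B_1(\eta))C(\eta,\delta,0,2k)^{1/2k}$ is not the $O(1)$ of Proposition~\ref{prop: approx S(t,chi)} made explicit; it is the Mertens bound $\sum_{p<x^{3/2}}(\log p)/p\sim\tfrac32\log x$ applied to the prime-square part of $r(x,t)$ (contributing the $B_1(\eta)\cdot\tfrac32$) together with the same bound arising in the abscissa-shift term $I_2$ (contributing the $1\cdot\tfrac32$), while the genuine $O(1)$ becomes $o(1)$ once the accompanying factor $(\sigma_{t,\chi}-\tfrac12)$ is averaged. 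Your dyadic scheme might well lead to a bound of a similar shape, but it would not reproduce the stated $h(k)$, and the integral-representation device you are missing is both the crux of the argument and a considerable simplification over a good/bad split.
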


We stated and shall prove the Proposition for any positive integer $k$, but in our applications only the case $k=1$ is needed. In particular, $h(1)=6.390\ldots$ with the minimum attained at $\Delta=0.645\ldots$. 

\begin{proof}
    We begin by writing
    \begin{align*}
        \pi S(t,\chi)-\Im \sum_{p<x^3}\frac{\chi(p)}{p^{1/2+it}}
        = \pi S(t,\chi)-&\Im \sum_{n<x^3}\frac{\Lambda_x(n)\chi(n)}{n^{\sigma_{t,\chi}+it}\log n}\\
        +&\Im \sum_{n<x^3}\frac{\Lambda_x(n)\chi(n)}{n^{\sigma_{t,\chi}+it}\log n}-\Im \sum_{n<x^3}\frac{\Lambda_x(n)\chi(n)}{n^{1/2+it}\log n}\\
        +&\Im \sum_{n<x^3}\frac{\Lambda_x(n)\chi(n)}{n^{1/2+it}\log n}-\Im \sum_{p<x^3}\frac{\Lambda_x(p)\chi(p)}{p^{1/2+it}\log p}\\
        +&\Im \sum_{p<x^3}\frac{(\Lambda_x(p)-\Lambda(p))\chi(p)}{p^{1/2+it}\log p}.
    \end{align*}
    Label the terms in each line on the right-hand side by $I_1,I_2,I_3,I_4$. By Proposition~\ref{prop: approx S(t,chi)},
    \begin{align*}
        |I_1|\leq& \left(\sigma_{t,\chi}-\frac{1}{2}\right) B_1(\eta) \left(\left|\sum_{p<x^3}\frac{\Lambda_x(p)\chi(p)}{p^{\sigma_{t,\chi}+it}}\right|+\left|\sum_{p<x^{3/2}}\frac{\Lambda_x(p^2)\chi(p^2)}{p^{2\sigma_{t,\chi}+2it}}\right|\right)\\
        &\hspace{1cm}+ \left(\sigma_{t,\chi}-\frac{1}{2}\right) B_2(\eta)\log q(|t|+1)+O\left(\sigma_{t,\chi}-\frac{1}{2}\right).
    \end{align*}
    The first sum here can be rewritten as
    \begin{align}\label{Lambda_x(p) integral}
        \left|\sum_{p<x^3}\frac{\Lambda_x(p)\chi(p)}{p^{\sigma_{t,\chi}+it}}\right|= &x^{\sigma_{t,\chi-1/2}}\left|\int_{\sigma_{t,\chi}}^\infty x^{1/2-\sigma}\sum_{p<x^3}\frac{\Lambda_x(p)\log(xp)\chi(p)}{p^{\sigma+it}}\md \sigma\right|\notag \\
        \leq& x^{\sigma_{t,\chi-1/2}}\int_{1/2}^\infty x^{1/2-\sigma}\left|\sum_{p<x^3}\frac{\Lambda_x(p)\log(xp)\chi(p)}{p^{\sigma+it}}\right|\md \sigma,
    \end{align}
    and the second sum is at most
    \begin{equation}\label{sum logp/p}
        \sum_{p<x^{3/2}}\frac{\log p}{p}=\frac{3}{2}\log x+O(1)
    \end{equation}
    by Mertens' theorem. 
    Next, since $1-e^{-x}<x$ for all $x$, 
    \begin{align*}
        |I_2|=&\left|\sum_{p<x^3} \frac{\Lambda_x(p)\chi(p)}{p^{1/2+it}\log p}\left(1-p^{1/2-\sigma_{t,\chi}}\right)+\frac{1}{2}\sum_{p<x^{3/2}} \frac{\Lambda_x(p^2)\chi(p^2)}{p^{1+2it}\log p}\left(1-p^{1-2\sigma_{t,\chi}}\right)\right|+O\left(\sigma_{t,\chi}-\frac{1}{2}\right)\\
        \leq& \left(\sigma_{t,\chi}-\frac{1}{2}\right)\left|\sum_{p<x^3} \frac{\Lambda_x(p)\chi(p)}{p^{1/2+it}}\right| + \frac{3}{2}\left(\sigma_{t,\chi}-\frac{1}{2}\right)\log x
        +O\left(\sigma_{t,\chi}-\frac{1}{2}\right)
    \end{align*}
    where we applied \eqref{sum logp/p} again. By a similar reasoning to \eqref{Lambda_x(p) integral}, the first term above is at most
    \[
    \left(\sigma_{t,\chi}-\frac{1}{2}\right)\int_{1/2}^\infty x^{1/2-\sigma}\left|\sum_{p<x^3}\frac{\Lambda_x(p)\log(xp)\chi(p)}{p^{\sigma+it}}\right|\md \sigma.
    \]
    For $I_3$, note that
    \[
    \sum_{\ell=3}^\infty \sum_{p^\ell<x^3} \frac{\Lambda_x(p^\ell)}{p^{\ell/2}\log p^\ell} < \sum_{\ell=3}^\infty \sum_p \frac{1}{\ell p^{\ell/2}}=\sum_{\ell=3}^\infty  \frac{\zeta_\mathcal{P}(\ell/2)}{\ell}
    \]
    where $\zeta_\mathcal{P}$ is the prime zeta function. We verify that this infinite sum does not exceed 0.53 by computing on Mathematica its partial sum up to $\ell=500$ and then bounding the tail of the series by
    \[
    \sum_{\ell=501}^\infty \frac{\zeta(\ell/2)-1}{\ell}< \sum_{\ell=501}^\infty \frac{1}{\ell(\ell/2-1)}=\frac{1}{499}+\frac{1}{500}.
    \]
    Here we used the inequality $\zeta(\sigma)<\frac{\sigma}{\sigma-1}$ for $\sigma>1$ (see, e.g., \cite[Corollary 1.14]{MV})). Hence 
    \[
    |I_3|\leq \frac{1}{2}\left|\sum_{p<x^{3/2}} \frac{\Lambda_x(p^2)\chi(p^2)}{p^{1+2it}\log p}\right|+0.53.
    \]
    Putting everything together, we arrive at
    \begin{equation}\label{eqn approx S(t,chi)}
        \begin{split}
            &\left|\pi S(t,\chi)-\Im \sum_{p<x^3}\frac{\chi(p)}{p^{1/2+it}}\right|\\
            &\hspace{0.5cm}\leq \left(\sigma_{t,\chi}-\frac{1}{2}\right)\left(1+B_1(\eta)x^{\sigma_{t,\chi-1/2}}\right)\int_{1/2}^\infty x^{1/2-\sigma}\left|\sum_{p<x^3}\frac{\Lambda_x(p)\log(xp)\chi(p)}{p^{\sigma+it}}\right|\md \sigma\\
            &\hspace{1cm}+\frac{3}{2}(1+B_1(\eta))\left(\sigma_{t,\chi}-\frac{1}{2}\right)\log x+B_2(\eta)\left(\sigma_{t,\chi}-\frac{1}{2}\right)\log q(|t|+1)\\
            &\hspace{1cm}+\frac{1}{2}\left|\sum_{p<x^{3/2}} \frac{\Lambda_x(p^2)\chi(p^2)}{p^{1+2it}\log p}\right|+0.53+ \sum_{p<x^3}\left|\frac{(\Lambda_x(p)-\Lambda(p))\chi(p)}{p^{1/2+it}\log p}\right|+O\left(\sigma_{t,\chi}-\frac{1}{2}\right).
        \end{split}
    \end{equation}
    It follows from Minkowski's inequality (for $\ell_{p}$ norm, $p=2k$) that
    \begin{equation}\label{Minkoski}
        \frac{1}{q-2}\sum_{\chi\neq \chi_0}\left|\pi S(t,\chi)-\Im \sum_{p<x^3}\frac{\chi(p)}{p^{1/2+it}}\right|^{2k}\leq \left(\sum_{i=1}^7|A_i|^{\frac{1}{2k}}\right)^{2k},
    \end{equation}
    where
    \begin{align*}
        A_1:=&\frac{1}{q-2}\sum_{\chi\neq \chi_0} \left(\sigma_{t,\chi}-\frac{1}{2}\right)^{2k}\left(1+B_1(\eta)x^{\sigma_{t,\chi-1/2}}\right)^{2k} \\
        &\hspace{4cm}\cdot \left(\int_{1/2}^\infty x^{1/2-\sigma}\left|\sum_{p<x^3}\frac{\Lambda_x(p)\log(xp)\chi(p)}{p^{\sigma+it}}\right|\md \sigma\right)^{2k},\\
        A_2:=&\frac{1}{q-2}\sum_{\chi\neq \chi_0}\left(\frac{3}{2}(1+B_1(\eta))\right)^{2k}\left(\sigma_{t,\chi}-\frac{1}{2}\right)^{2k}(\log x)^{2k},\\        
        A_3:=&\frac{1}{q-2}\sum_{\chi\neq \chi_0}B_2(\eta)^{2k} \left(\sigma_{t,\chi}-\frac{1}{2}\right) ^{2k}\left(\log q(|t|+1)\right)^{2k},\\        
        A_4:=&\frac{1}{q-2}\sum_{\chi\neq \chi_0} \frac{1}{2^{2k}} \left|\sum_{p<x^{3/2}} \frac{\Lambda_x(p^2)\chi(p^2)}{p^{1+2it}\log p}\right|^{2k},\\       
        A_5:=&0.53^{2k},\\
        A_6:=&\frac{1}{q-2}\sum_{\chi\neq\chi_0}\sum_{p<x^3}\left|\frac{(\Lambda_x(p)-\Lambda(p))\chi(p)}{p^{1/2+it}\log p}\right|^{2k},\\        
        A_7:=&\frac{c(k)}{q-2}\sum_{\chi\neq \chi_0}\left(\sigma_{t,\chi}-\frac{1}   {2}\right)^{2k}, \quad \text{$c(k)$ is some constant.}
    \end{align*}   
    We first bound $A_2$ to $A_7$. By taking $r=0$ and $v=2k$ in Lemma~\ref{lemma 17 v2} we have $|A_7|\ll (\log x)^{-2k}=o(1)$,
    \[
    |A_2|\leq \left(\frac{3}{2}(1+B_1(\eta))\right)^{2k}C(\eta,\delta,0,2k),
    \]
    and
    \[
    |A_3|\leq B_2(\eta)^{2k}C(\eta,\delta,0,2k)\left(\frac{\log q(|t|+1)}{\log x}\right)^{2k}\leq B_2(\eta)^{2k}C(\eta,\delta,0,2k)\left(\frac{5/4-\eps}{\delta\kappa}+o(1)\right)^{2k},
    \]
    where we used the assumptions $|t|\leq q^{1/4-\eps}$ and $x=q^{\delta \kappa}$. Since $x^3< q^{1/k}$, Lemma~\ref{lemma square prime sum} implies that
    \begin{align*}
        |A_4| \leq& \frac{(1+o(1))k!}{4^k}\left(\sum_{p<x^{3/2}} \frac{\Lambda_x(p^2)^2}{p^2(\log p)^2}\right)^k < \frac{(1+o(1))k!}{4^k}\left(\sum_{p} \frac{1}{p^2}\right)^k<k!\cdot 0.12^k .
    \end{align*}
    and
    \begin{align*}
        |A_6|\leq& (1+o(1))k!\left(\sum_{p<x^3}\frac{(\Lambda_x(p)-\Lambda(p))^2}{p(\log p)^2}\right)^{k}<(1+o(1))k!\left(\sum_{x\leq p<x^2}\frac{1}{4p}+\sum_{x^2\leq p<x^3}\frac{1}{p}\right)^{k}\\
        \leq& (1+o(1))k!\left(\log 3-\frac{3}{4}\log 2+o(1)\right)^{k}<k!\cdot 0.58^k.
    \end{align*}
    Here we used 
    \[
    \sum_{p}\frac{1}{p^2}=0.4522\ldots, \quad \sum_{p\leq x}\frac{1}{p}=\log\log x+M+o(1)
    \]
    where $M$ is a constant.   
    
    It remains to handle $A_1$. Using the Cauchy\textendash Schwarz inequality and the bound $x^{\sigma_{t,\chi-1/2}}\geq e^{2\eta}$, we find that
    \begin{equation}\label{prop thm 7 A_1}
        \begin{split}
            |A_1|\leq & \frac{1}{q-2}\left(B_1(\eta)+e^{-2\eta}\right)^{2k} \sqrt{\sum_{\chi\neq \chi_0} \left(\sigma_{t,\chi}-\frac{1}{2}\right)^{4k}x^{4k(\sigma_{t,\chi}-1/2)}} \\
            &\hspace{3cm}\cdot \sqrt{\sum_{\chi\neq \chi_0}\left(\int_{1/2}^\infty x^{1/2-\sigma}\left|\sum_{p<x^3}\frac{\Lambda_x(p)\log(xp)\chi(p)}{p^{\sigma+it}}\right|\md \sigma\right)^{4k}}
        \end{split}
    \end{equation}
    By Lemma~\ref{lemma 17 v2} applied with $r=v=4k$ (which is valid due to our assumption $\delta<2/(8k+3)=2/(2r+3)$),
    the first sum is
    \[
    \leq C(\eta,\delta,4k,4k)\frac{q-2}{(\log x)^{4k}}.
    \]
    For the other sum, an application of H\"{o}lder's inequality gives
    \begin{align*}
        \sum_{\chi\neq \chi_0}&\left(\int_{1/2}^\infty x^{1/2-\sigma}\left|\sum_{p<x^3}\frac{\Lambda_x(p)\log(xp)\chi(p)}{p^{\sigma+it}}\right|\md \sigma\right)^{4k} \\
        \leq& \sum_{\chi\neq \chi_0}\left(\int_{1/2}^\infty x^{1/2-\sigma}\md \sigma\right)^{4k-1} \int_{1/2}^\infty x^{1/2-\sigma} \left|\sum_{p<x^3}\frac{\Lambda_x(p)\log(xp)\chi(p)}{p^{\sigma+it}}\right|^{4k}\md \sigma\\
        =&\frac{1}{(\log x)^{4k-1}} \int_{1/2}^\infty x^{1/2-\sigma} \sum_{\chi\neq \chi_0}  \left|\sum_{p<x^3}\frac{\Lambda_x(p)\log(xp)\chi(p)}{p^{\sigma+it}}\right|^{4k}\md \sigma\\
        \leq &\frac{q-2}{(\log x)^{4k-1}} (1+o(1))(2k)!\int_{1/2}^\infty x^{1/2-\sigma} \left(\sum_{p<x^3}\frac{\Lambda_x(p)^2\log(xp)^2}{p^{2\sigma}}\right)^{2k}\md \sigma.
    \end{align*}
    By splitting this integral into two parts at $1/2+\Delta/\log x$, bounding each prime sum trivially by its value at the left endpoint of the interval, and then applying the prime number theorem, we see that the above line is
    \begin{align*}
        \leq& \frac{q-2}{(\log x)^{4k}} (1+o(1))(2k)!\left[(1-e^{-\Delta})\left(\sum_{p<x^3}\frac{\Lambda_x(p)^2\log(xp)^2}{p}\right)^{2k}+e^{-\Delta}\left(\sum_{p<x^3}\frac{\Lambda_x(p)^2\log(xp)^2}{p^{1+\frac{2\Delta}{\log x}}}\right)^{2k}\right]\\
        \leq& (1+o(1))^{2k}\frac{q-2}{(\log x)^{4k}}  (2k)!\left[(1-e^{-\Delta})\left(\int_1^{x^3} f(y)\md y\right)^{2k}+e^{-\Delta}\left(\int_1^{x^3} g_\Delta(y)\md y\right)^{2k}\right]
    \end{align*}
    for any $\Delta\geq 0$, where
    \[
    f(y):=\frac{\Lambda_x(y)^2\log(xy)^2}{y\log y}\quad \text{and} \quad g_\Delta(y):=\frac{\Lambda_x(y)^2\log(xy)^2}{y^{1+\frac{2\Delta}{\log x}}\log y}.
    \]
    We compute, by using the definition of $\Lambda_x(y)$ on corresponding intervals, 
    \[
    \frac{1}{\log^4 x}\int_1^{x^3} f_x(y)\md y=\left(\frac{17}{12}+\frac{6899}{1120} + \frac{3679}{3360} \right)<8.68.
    \]
    For the other integral, we use the upper bound $\Lambda_x(y)\leq \log y$ for simplicity and obtain
    \[
    \frac{1}{\log^4 x}\int_1^{x^3} g_\Delta(y)\md y<\frac{\left(2\Delta^{2}+4\Delta+3\right)e^{6\Delta}-192\Delta^{3}-80\Delta^{2}-22\Delta-3}{8\Delta^{4}e^{6\Delta}}.
    \]
    Returning to \eqref{prop thm 7 A_1}, we conclude that 
    \begin{align*}
        |A_1|<&  \left(B_1(\eta)+e^{-2\eta}\right)^{2k} \sqrt{\frac{C(\eta,\delta,4k,4k)}{(\log x)^{4k}}}\sqrt{(2k)!(\log x)^{4k} h(k)^{2k}}\\
        =& \left(B_1(\eta)+e^{-2\eta}\right)^{2k} \sqrt{C(\eta,\delta,4k,4k)\cdot (2k)!}\cdot h(k)^k
    \end{align*}
    where $h(k)$ was defined in \eqref{def h(k)}. Finally, collecting the estimates for each $|A_i|$ and inserting them into \eqref{Minkoski} completes the proof.
\end{proof}

\section{Proofs of Theorems~\ref{main theorem} and \ref{theorem: mean square S(t,chi)}}

We are now ready to prove our main theorems.

\subsection{Proof of Theorem~\ref{main theorem}}

Assume $|t|\leq 1$. Let $\eps=1/4$ (so that $|t|\leq q^{1/4-\eps}$), $0<\kappa<1/8$, $0<\delta<2/11$, and set $x= q^{\delta\kappa}$.
By the triangle inequality,
\begin{equation}\label{main theorem s1}
    \left|\sum_{\chi\neq \chi_0} S(t,\chi)\right|\le 
    \frac{1}{\pi}\sum_{p<x^3}\frac{1}{p^{1/2}}+\sum_{\chi\neq \chi_0} \left|S(t,\chi)-\frac{1}{\pi}\Im \sum_{p<x^3}\frac{\chi(p)}{p^{1/2+it}}\right|.
\end{equation}
The first sum on the right-hand side in \eqref{main theorem s1} is at most
\begin{equation*}
    \frac{1}{\pi}\int_0^{x^3} \frac{1}{u^{1/2}}\,du < \frac{2}{\pi} q^{3\kappa/11}<\frac{2}{\pi}q^{3/88}.
\end{equation*}
As for the second sum, we bound it using the Cauchy--Schwarz inequality. To this end, by Proposition~\ref{prop: selberg theorem 7} applied with $k=1$,
\begin{equation*}
    \sum_{\chi\neq \chi_0} \left|S(t,\chi)-\frac{1}{\pi}\Im \sum_{p<x^3}\frac{\chi(p)}{p^{1/2+it}}\right|^2 \le 
    (q-2)\cdot D(\eta,\delta,\kappa,1,1/4)
\end{equation*}
for $q\geq q_0(\delta,\kappa)$. Combining the last two estimates, we thus obtain
\begin{equation*}
    \frac{1}{q-2}\left|\sum_{\chi\neq \chi_0} S(t,\chi)\right| <  \frac{2}{\pi} q^{3/88-1}+ \sqrt{D(\eta,\delta,\kappa,1,1/4)}.
\end{equation*} 
To minimize this expression, we choose, after a numerical search, 
\begin{equation*}
    \eta=1.147,\qquad
    \delta = \frac{2}{11}\left(1-0.124\right)=0.159\ldots.
\end{equation*}
As we take $\kappa\to 1/8$ and $q\to \infty$, the resulting constant approaches $1074.8\ldots$, which justifies the claimed bound $C_0=1075$ when $q\geq q_0$.

\subsection{Proof of Theorem~\ref{theorem: mean square S(t,chi)}}
    
We choose the same parameters as in the proof of Theorem~\ref{main theorem}. In particular, $\eta=1.147$, $\delta=0.159\ldots$, $0<\kappa<1/8$. Since $|t|\leq \frac{2\pi\beta}{\log q}$ and $q$ is sufficiently large, we can assume $|t|\leq 1$. Write
\begin{multline*}
    \sum_{\chi\neq \chi_0} \wt{S}(t,\chi)^2
    =\sum_{\chi\neq \chi_0}\Bigg|S(t,\chi)-\frac{1}{\pi}\Im \sum_{p<x^3}\frac{\chi(p)}{p^{1/2+it}}+S(t,\ov{\chi})-\frac{1}{\pi}\Im \sum_{p<x^3}\frac{\ov{\chi(p)}}{p^{1/2+it}}\\
    +\frac{1}{\pi}\Im \sum_{p<x^3}\frac{\chi(p)+\ov{\chi(p)}}{p^{1/2+it}}\Bigg|^2.
\end{multline*}
Using Minkowski's inequality, we obtain
\begin{equation}\label{mean square S s1}
    \begin{split}
        \sqrt{\sum_{\chi\neq \chi_0} \wt{S}(t,\chi)^2}\leq &2\sqrt{\sum_{\chi\neq \chi_0}\Bigg|S(t,\chi)-\frac{1}{\pi}\Im \sum_{p<x^3}\frac{\chi(p)}{p^{1/2+it}}\Bigg|^2}\\
        &\hspace{2cm}+\sqrt{\sum_{\chi\neq \chi_0} \frac{1}{\pi^2}\Bigg| \sum_{p<x^3} \left(\chi(p)+\ov{\chi(p)}\right)\frac{\sin(t\log p)}{\sqrt{p}}\Bigg|^2}.
    \end{split}
\end{equation}
By Proposition~\ref{prop: selberg theorem 7} and the proof of Theorem~\ref{main theorem}, when we take $\kappa$ sufficiently close to $1/8$ and $q$ large, the first sum does not exceed
\[
D(\eta,\delta,\kappa,1,1/4) \cdot (q-2)<C_0^2 \cdot (q-2).
\]
We expand the other sum into 
\begin{align*}
    \frac{1}{\pi^2}\sum_{p_1,p_2<x^3}\sum_{\chi\neq \chi_0}\left(\chi(p_1p_2)+\chi(p_1p_2^{-1})+\chi(p_1^{-1}p_2)+\chi(p_1^{-1}p_2^{-1})\right)\frac{\sin(t\log p_1)\sin(t\log p_2)}{\sqrt{p_1p_2}}.
\end{align*}
Since $p_1p_2<x^6<q$, by orthogonality \eqref{orthogonality} this equals 
\begin{align*}
    =&(q-2)\frac{2}{\pi^2}\sum_{p<x^3}\frac{\sin(t\log p)^2}{p}+O\left(\left|\sum_{p<x^3}\frac{\sin(t\log p)}{\sqrt{p}}\right|^2\right)\\
    =&(q-2)\frac{2}{\pi^2}\left(\int_2^{x^3}\frac{\sin(t\log y)^2}{y\log y}\md y+O(t^2)\right).
\end{align*}
Here we used the prime number theorem as well as the bound $|\sin(t\log y)|\leq |t|\log y$. Since $|t|\leq \frac{2\pi\beta}{\log q}$ and $x^3=q^{3\delta\kappa}<q^{3\cdot 0.16/8}=q^{3/50}$, after a change of variable we deduce from \eqref{mean square S s1} that
\[
\frac{1}{q-2}\sum_{\chi\neq \chi_0} \wt{S}(t,\chi)^2< \left(2C_0+\frac{\sqrt{2}}{\pi}\sqrt{\int_0^{\frac{3\beta}{50}}\frac{\sin(2\pi y)^2}{y}\md y}\right)^2
\]
as $q\to \infty$, proving the first assertion. Via integration by parts, the preceding integral equals
\[
\frac{1}{2}\int_0^{\frac{3\beta}{50}}\frac{\md y}{y}+\frac{1}{2}\int_0^{\frac{3\beta}{50}} \frac{\cos(4\pi y)}{y}\md y =\frac{\log(\beta+1)}{2}+O(1),
\]
so the second assertion \eqref{E[S(t,chi)^2] asymptotics} holds as well.

\begin{remark}
    We restricted to low-lying height $|t|\leq \frac{2\pi\beta}{\log q}$ in the above argument, but considering the bound $|\sin(t\log y)|\leq \min\{1,|t|\log y\}$, we have in fact shown that for any $|t|\leq q^{1/4-\eps}$,
    \begin{multline*}
        \limsup_{q\to \infty}\bb{E}[\wt{S}(t,\chi)^2]\leq \frac{1}{\pi^2}\min\{\log\log q, \log(|t|\log q+3)\}\\
        +O\left(\min\left\{\sqrt{\log\log q}, \sqrt{\log(|t|\log q+3)}\right\}\right).
    \end{multline*}
\end{remark}

\printbibliography

\end{document}